\definecolor{blue}{rgb}{0,0,1}
\definecolor{red}{rgb}{1,0,0}
\definecolor{green}{rgb}{0,.6,.2}
\definecolor{purple}{rgb}{1,0,1}
 \renewcommand{\a}{\alpha}
\renewcommand{\b}{\beta}
\newcommand{\G}{\Gamma}
\renewcommand{\(}{\left\(}
\renewcommand{\)}{\right\)}
\renewcommand{\[}{\left\[}
\renewcommand{\]}{\right\]}
\newtheorem{remark}[]{Remark}
\numberwithin{equation}{section}
 \theoremstyle{plain}
\newtheorem{theorem}{Theorem}[section]
\newtheorem{lemma}[theorem]{Lemma}
\newtheorem{corollary}[theorem]{Corollary}
\newtheorem{proposition}[theorem]{Proposition}
\def\proof{\@ifnextchar[{\@oproof}{\@nproof}}
\def\@oproof[#1][#2]{\trivlist\item[\hskip\labelsep\textit{#2 Proof of\
#1.}~]\ignorespaces}
\def\@nproof{\trivlist\item[\hskip\labelsep\textit{Proof.}~]\ignorespaces}
\DeclarePairedDelimiterX\MeijerM[3]{\lparen}{\rparen}%
{\begin{smallmatrix}#1 \\ #2\end{smallmatrix}\delimsize\vert\,#3}
\newcommand\MeijerG[8][]{%
  G^{\,#2,#3}_{#4,#5}\MeijerM[#1]{#6}{#7}{#8}}
\newcommand\MeijerG*[7]{%
  G^{\,#1,#2}_{#3,#4}\MeijerM*{#5}{#6}{#7}}
\begin{document}
\title[Lipschitz summation formula and Raabe's cosine transform]{Applications of the Lipschitz summation formula and A generalization of Raabe's cosine transform} 

\author{Atul Dixit}
\address{Discipline of Mathematics, Indian Institute of Technology Gandhinagar, Palaj, Gandhinagar 382355, Gujarat, India} 
\email{adixit@iitgn.ac.in}

\author{Rahul Kumar}
\address{Center for Geometry and Physics, Institute for Basic Science (IBS), Pohang 37673, Republic of Korea} 
\curraddr{Department of Mathematics, The Pennsylvania State University, University Park, PA, U.S.A.}
\email{rjk6031@psu.edu}

\thanks{2020 \textit{Mathematics Subject Classification.} Primary 11M06; Secondary 44A20, 30E20.\\
\textit{Keywords and phrases.}  Lipschitz summation formula, Raabe cosine transform, Ramanujan's formula, Hurwitz zeta function, Lambert series.}
\begin{abstract}
General summation formulas have been proved to be very useful in analysis, number theory and other branches of mathematics. The Lipschitz summation formula is one of them. In this paper, we give its application by providing a new transformation formula which generalizes that of Ramanujan. Ramanujan's result, in turn, is a generalization of the modular transformation of Eisenstein series $E_k(z)$ on SL$_2(\mathbb{Z})$, where $z\to-1/z, z\in\mathbb{H}$. The proof of our result involves delicate analysis containing Cauchy Principal Value integrals.  A simpler proof of a recent result of ours with Kesarwani giving a non-modular transformation for $\sum_{n=1}^{\infty}\sigma_{2m}(n)e^{-ny}$ is also derived using the Lipschitz summation formula. In the pursuit of obtaining this transformation, we naturally encounter a new generalization of Raabe's cosine transform whose several properties are also demonstrated. As a corollary of this result, we get a generalization of Wright's asymptotic estimate for the generating function of the number of plane partitions of a positive integer $n$. 
\end{abstract}
\maketitle

\tableofcontents

\section{Introduction}\label{intro}
Let $\a, \b>0$ with $\a\b=\pi^2$ and $m\in\mathbb{Z}\backslash\{0\}$. Ramanujan's famous formula for $\zeta(2m+1)$ is given by\footnote{Ramanujan's formula is actually valid for any complex $\a, \b$ such that $\textup{Re}(\a)>0, \textup{Re}(\b)>0$ and $\a\b=\pi^2$.} \cite[p.~173, Ch. 14, Entry 21(i)]{ramnote}, \cite[p.~319-320, formula (28)]{lnb}, \cite[p.~275-276]{bcbramsecnote} 
\begin{align}\label{rameqn}
\alpha^{-m}\left\{\frac{1}{2}\zeta(2m+1)+\sum_{n=1}^\infty \frac{n^{-2m-1}}{e^{2n\alpha}-1}\right\}&=(-\beta)^m\left\{\frac{1}{2}\zeta(2m+1)+\sum_{n=1}^\infty\frac{n^{-2m-1}}{e^{2n\beta}-1}\right\}\nonumber\\
&\qquad-2^{2m}\sum_{k=0}^m\frac{(-1)^{k}B_{2k}B_{2m+2-2k}}{(2k)!(2m+2-2k)!}\alpha^{m+1-k}\beta^k,
\end{align}
where, as customary, $\zeta(s)$ denotes the Riemann zeta function and $B_n$ denotes the $n^{\textup{th}}$ Bernoulli number defined by
\begin{equation*}
	\sum_{n=0}^{\infty}\frac{B_n z^n}{n!}=\frac{z}{e^{z}-1} \hspace{5mm}(|z|<2\pi). 
\end{equation*}
The above formula has received enormous attention from several mathematicians over the years and has been rediscovered many times, for example, see \cite[Theorem 9]{guinand1944} and \cite{malurkar}. It is an impressive result, for,  it encapsulates not only the transformation formulas of the Eisenstein series on SL$_2(\mathbb{Z})$ and the corresponding ones for their Eichler integrals but also the transformation property of the logarithm of the Dedekind eta function. For a delightful historical account on it, we refer the reader to the excellent survey \cite{berndtstraubzeta}. There are several generalizations of \eqref{rameqn} in the literature, for example, \cite{bradley2002}, \cite{dg}, \cite{dgkm}, \cite{dkk1}, \cite{dixitmaji1}, \cite{komori} and \cite{mg}.
In his second notebook \cite[p.~269]{ramnote}, Ramanujan himself provided the following generalization of \eqref{rameqn}.

Let $\alpha$ and $\beta$ be two positive real numbers such that $\alpha\beta=4\pi^2$. Then for $\mathrm{Re}(s)>2$, we have 
\begin{align}\label{ramanujan gen eqn}
\alpha^{s/2}\left\{\frac{\Gamma(s)\zeta(s)}{(2\pi)^s}+\cos\left(\frac{\pi s}{2}\right)\sum_{n=1}^\infty\frac{n^{s-1}}{e^{n\alpha}-1}\right\}&=\beta^{s/2}\left\{\cos\left(\frac{\pi s}{2}\right)\frac{\Gamma(s)\zeta(s)}{(2\pi)^s}+\sum_{n=1}^\infty\frac{n^{s-1}}{e^{n\beta}-1}\right.\nonumber\\
&\left.\quad-\sin\left(\frac{\pi s}{2}\right)\mathrm{PV}\int_0^\infty\frac{x^{s-1}}{e^{2\pi x}-1}\cot\left(\frac{1}{2}\beta x\right)dx\right\},
\end{align}
where $\mathrm{PV}$ denotes the principal value integral. The above formula has been proved in \cite[p.~416]{bcbramfifthnote}. Also see \cite[Section 9]{bdgz} for a recent generalization of \eqref{ramanujan gen eqn}.

Unfortunately, Ramanujan's formula \eqref{ramanujan gen eqn} has not received as much attention as \eqref{rameqn}. But it is also a noteworthy result because it not only gives the transformation formula for the Eisenstein series on SL$_2(\mathbb{Z})$ in the special case $s=2m, m\in\mathbb{N}, m>1$, but also reveals the obstruction to modularity for other values of $s$,  which is evident due to the appearance of the integral on its right-hand side. Note that the last term involving the integral disappears for $s=2m$.

One of the goals of this paper is to derive a generalization of \eqref{ramanujan gen eqn}:
\begin{theorem}\label{ram with a}
Let $\mathrm{Re}(\alpha),\mathrm{Re}(\beta)>0$ such that $\alpha\beta=4\pi^2$. Let $0\leq a<1$. Then, for $\mathrm{Re}(s)>2$, the following transformation holds
\begin{align}\label{ram with a eqn}
&\alpha^{s/2}\left\{\frac{\Gamma(s)\zeta(s)}{(2\pi)^s}+\frac{1}{2}\sum_{n=1}^\infty n^{s-1}\left(\frac{e^{\pi is/2}}{e^{n\alpha-2\pi ia}-1}+\frac{e^{-\pi is/2}}{e^{n\alpha+2\pi ia}-1}\right)\right\}\nonumber\\
&=\beta^{s/2}\left\{\frac{\Gamma(s)}{(2\pi)^{s}}\sum_{k=1}^\infty\frac{\cos\left(\frac{\pi s}{2}+2\pi ak\right)}{k^s}+\sum_{n=1}^\infty\frac{(n-a)^{s-1}}{e^{(n-a)\beta}-1}\right.\nonumber\\
&\quad\left.-\frac{1}{2i}\mathrm{PV}\int_0^\infty x^{s-1}
\left(\frac{e^{\pi is/2}}{e^{2\pi x-2\pi ia}-1}-\frac{e^{-\pi is/2}}{e^{2\pi x+2\pi ia}-1}\right)\cot\left(\frac{1}{2}\beta x\right)dx\right\}.
\end{align}

\end{theorem}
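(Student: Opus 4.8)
We only sketch the argument. The plan is to apply the Lipschitz summation formula twice to the series $\sum_{n=1}^{\infty}(n-a)^{s-1}/(e^{(n-a)\beta}-1)$ occurring on the right of \eqref{ram with a eqn}, and to identify the ``defect'' produced by the second application with the principal value integral there. Expanding $1/(e^{w}-1)=\sum_{j=1}^{\infty}e^{-jw}$ (legitimate since $\mathrm{Re}((n-a)\beta)>0$), writing $n-a=m+(1-a)$ with $m\geq0$, and applying the Lipschitz summation formula
\begin{equation*}
\sum_{m=0}^{\infty}(m+\nu)^{s-1}e^{2\pi i(m+\nu)\tau}=\frac{\Gamma(s)}{(-2\pi i)^{s}}\sum_{n\in\mathbb{Z}}\frac{e^{-2\pi in\nu}}{(n+\tau)^{s}}\qquad(\mathrm{Im}(\tau)>0,\ \mathrm{Re}(s)>1)
\end{equation*}
with $\nu=1-a\in(0,1]$ and $\tau=ij\beta/(2\pi)$, so that $e^{2\pi i(m+\nu)\tau}=e^{-(m+1-a)j\beta}$, $e^{-2\pi in(1-a)}=e^{2\pi ina}$ and $(2\pi)^{s}/(-2\pi i)^{s}=e^{\pi is/2}$, one obtains
\begin{equation*}
\sum_{n=1}^{\infty}\frac{(n-a)^{s-1}}{e^{(n-a)\beta}-1}=\Gamma(s)e^{\pi is/2}\sum_{j=1}^{\infty}\sum_{n\in\mathbb{Z}}\frac{e^{2\pi ina}}{(2\pi n+ij\beta)^{s}}.
\end{equation*}
The hypothesis $\mathrm{Re}(s)>2$ is precisely what guarantees absolute convergence of the two-dimensional lattice sum $\sum_{n,j}|2\pi n+ij\beta|^{-\mathrm{Re}(s)}$, which is needed to justify the rearrangements below; it is convenient to work first with $\mathrm{Re}(\beta)>0$, $\mathrm{Im}(\beta)\leq0$, and to continue analytically in $\alpha,\beta$ at the end.

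Split the inner sum according to $n=0$, $n\geq1$, $n\leq-1$. The $n=0$ term is $\Gamma(s)e^{\pi is/2}\zeta(s)/(i\beta)^{s}=\Gamma(s)\zeta(s)/\beta^{s}$, which after multiplication by $\beta^{s/2}$ and use of $\alpha\beta=4\pi^{2}$ becomes $\alpha^{s/2}\Gamma(s)\zeta(s)/(2\pi)^{s}$, the first term inside the braces on the left of \eqref{ram with a eqn}. For $n\leq-1$ put $n=-m$ and use the principal-branch identity $(-2\pi m+ij\beta)^{s}=e^{i\pi s}(2\pi m-ij\beta)^{s}$. In each of the two remaining series, for fixed $n$ decompose
\begin{equation*}
\sum_{j\geq1}g(j)=\tfrac12\sum_{j\in\mathbb{Z}}g(j)-\tfrac12\,g(0)+\tfrac12\sum_{j\geq1}\bigl(g(j)-g(-j)\bigr),\qquad g(j)=(2\pi n\pm ij\beta)^{-s}.
\end{equation*}
A second application of the Lipschitz formula yields $\sum_{j\in\mathbb{Z}}(2\pi n+ij\beta)^{-s}=\frac{\alpha^{s}}{(2\pi)^{s}\Gamma(s)}\sum_{\ell\geq1}\ell^{s-1}e^{-\ell n\alpha}$ (and the companion identity with $2\pi n-ij\beta$); summing the $\tfrac12\sum_{j\in\mathbb{Z}}$-pieces over $n$ reassembles
\begin{equation*}
\frac{\alpha^{s}}{2(2\pi)^{s}}\sum_{\ell=1}^{\infty}\ell^{s-1}\left(\frac{e^{\pi is/2}}{e^{\ell\alpha-2\pi ia}-1}+\frac{e^{-\pi is/2}}{e^{\ell\alpha+2\pi ia}-1}\right),
\end{equation*}
which is $\tfrac12\alpha^{s/2}$ times the twisted Lambert series on the left of \eqref{ram with a eqn}, while the $-\tfrac12\,g(0)$-pieces sum to $-\Gamma(s)(2\pi)^{-s}\sum_{k\geq1}k^{-s}\cos(\pi s/2+2\pi ak)$ and, on being brought to the other side, supply the cosine series in \eqref{ram with a eqn}.

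It remains to match the antisymmetric remainders $\tfrac12\sum_{j\geq1}(g(j)-g(-j))$, taken over the two branches, weighted, and summed over $n$, with the principal value integral. Writing each $(2\pi n\pm ij\beta)^{-s}$ as a Laplace integral, summing over $j$, and using the elementary identity $\tfrac{1}{e^{i\theta}-1}-\tfrac{1}{e^{-i\theta}-1}=-i\cot(\theta/2)$ brings out a factor $\cot(\tfrac12\beta t)$; the closed form
\begin{equation*}
\sum_{n=1}^{\infty}\sin\!\Bigl(\frac{\pi s}{2}+2\pi na\Bigr)e^{-2\pi nt}=\frac{1}{2i}\left(\frac{e^{\pi is/2}}{e^{2\pi t-2\pi ia}-1}-\frac{e^{-\pi is/2}}{e^{2\pi t+2\pi ia}-1}\right)
\end{equation*}
then shows that the antisymmetric remainders amount to
\begin{equation*}
\frac{1}{2i}\,\mathrm{PV}\!\int_{0}^{\infty}x^{s-1}\left(\frac{e^{\pi is/2}}{e^{2\pi x-2\pi ia}-1}-\frac{e^{-\pi is/2}}{e^{2\pi x+2\pi ia}-1}\right)\cot\!\Bigl(\tfrac12\beta x\Bigr)dx.
\end{equation*}
Collecting the four contributions, multiplying through by $\beta^{s/2}$ and simplifying with $\alpha\beta=4\pi^{2}$ now gives \eqref{ram with a eqn}.

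The genuinely delicate point---and the origin of the principal value---is hidden in the last paragraph. The Laplace integrals, and the series $\sum_{j\geq1}e^{\mp ij\beta t}$ entering them, converge only for a fixed sign of $\mathrm{Im}(\beta)$, whereas for real positive $\beta$ the factor $\cot(\tfrac12\beta x)$ has poles on the path of integration at $x=2\pi k/\beta$, $k\geq1$. One must therefore carry out the computation with $\beta$ slightly off the positive real axis---rotating the two contours so that both of $(2\pi n\pm ij\beta)^{-s}$ admit convergent integral representations---justify all the interchanges of summation and integration uniformly in that parameter (again using $\mathrm{Re}(s)>2$), and only then let $\mathrm{Im}(\beta)\to0$; as the poles collide with the contour one checks that the half-residue contributions arising from the two pieces cancel, so that the surviving integral is forced to be read as a Cauchy principal value. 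The careful tracking of principal branches (the factor $e^{i\pi s}$ above, and its analogues in the second use of the Lipschitz formula for $j<0$) is the remaining technical chore, and the restriction $0\leq a<1$ is exactly what ensures $\nu=1-a\in(0,1]$ in the first application of the formula. The stated range $\mathrm{Re}(\alpha),\mathrm{Re}(\beta)>0$ then follows by analytic continuation.
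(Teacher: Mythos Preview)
Your approach is correct but takes a genuinely different route from the paper's. Both begin with one application of the Lipschitz summation formula to $\sum_{n}(n-a)^{s-1}/(e^{(n-a)\beta}-1)$, producing a double sum over $(n,j)$. From there the arguments diverge. The paper recognizes the inner $j$-sum as a pair of Hurwitz zeta values $\zeta(s,1\pm 2\pi ik/y)$, inserts the integral representation $\Gamma(s)\zeta(s,a)=\int_0^\infty t^{s-1}e^{-at}/(1-e^{-t})\,dt$, and then applies Parseval's formula for Mellin transforms to the resulting integrals $\int_0^\infty t^{s-1}\cos(2\pi kt/y)/(e^t-1)\,dt$ (and the sine analogue); contour shifting in the resulting Mellin--Barnes integrals $I_1,I_2$ yields the $\alpha$-side Lambert series (from $I_1$, via residues and the inverse Mellin transform of $\Gamma$) and the PV integral (from $I_2$, via a Mellin--Barnes expression containing $\cot(\pi z/2)$). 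You instead decompose the $j$-sum algebraically as $\tfrac12\sum_{j\in\mathbb{Z}}-\tfrac12 g(0)+\tfrac12\sum_{j\ge1}(g(j)-g(-j))$: the full lattice sum is handled by a \emph{second} Lipschitz, which directly manufactures the $\alpha$-side Lambert series; the $j=0$ correction is the cosine Dirichlet series; and the antisymmetric remainder is expressed via Laplace integrals whose $j$-sum produces $\cot(\tfrac12\beta t)$. Your double-Lipschitz route is more structural and transparent for the modular pieces, while the paper's Mellin machinery, though heavier, makes the emergence of the PV integral arise from a standard contour-shift computation. The technical crux---justifying the PV---is also handled differently: the paper invokes Hardy's theorems on interchanging summation with principal-value integration (their Proposition~3.1 and Lemma~3.2, with a bump-function localization around the poles of $\cot$), whereas you regularize by taking $\beta$ slightly complex and passing to the limit; both strategies are legitimate, but your sketch defers considerably more detail to the reader than the paper's treatment does.
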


The above theorem reduces to Ramanujan's formula \eqref{ramanujan gen eqn} for $a=0$. Also for $s=2m, m\in\mathbb{N}$ and $a=0$, it gives \eqref{rameqn}.

A special case of Theorem \ref{ram with a} is the new transformation given below.
\begin{corollary}\label{a half s 2m}
Let $m\in\mathbb{N}$. For $\mathrm{Re}(\alpha),\mathrm{Re}(\beta)>0$ such that $\alpha\beta=4\pi^2$, we have
\begin{align}\label{a half s 2m eqn}
\alpha^m\sum_{n=1}^\infty\frac{n^{2m-1}}{e^{n\alpha}+1}+(-\beta)^m\sum_{n=1}^\infty\frac{(n-1/2)^{2m-1}}{e^{(n-1/2)\beta}-1}=-\left\{\alpha^m-(2^{1-2m}-1)(-\beta)^m\right\}\frac{B_{2m}}{4m}.
\end{align}
\end{corollary}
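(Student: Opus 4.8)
The plan is to read off Corollary \ref{a half s 2m} from Theorem \ref{ram with a} by putting $a=\tfrac12$ and $s=2m$, once the exponential and trigonometric data that then appear have been simplified. First I would set $a=\tfrac12$ in \eqref{ram with a eqn}. Since $e^{\pm 2\pi i a}=e^{\pm\pi i}=-1$, every factor $e^{n\alpha\mp 2\pi i a}-1$ collapses to $-(e^{n\alpha}+1)$ and every factor $e^{2\pi x\mp 2\pi i a}-1$ collapses to $-(e^{2\pi x}+1)$. Consequently the bracket $\tfrac12\sum n^{s-1}\bigl(e^{\pi is/2}(\cdots)+e^{-\pi is/2}(\cdots)\bigr)$ on the left becomes $-\cos(\tfrac{\pi s}{2})\sum_{n\geq1}n^{s-1}/(e^{n\alpha}+1)$; inside the principal value integral the combination $e^{\pi is/2}(\cdots)-e^{-\pi is/2}(\cdots)$ becomes $-2i\sin(\tfrac{\pi s}{2})/(e^{2\pi x}+1)$, so the whole integral term equals $\sin(\tfrac{\pi s}{2})\,\mathrm{PV}\int_0^\infty x^{s-1}(e^{2\pi x}+1)^{-1}\cot(\tfrac{\beta x}{2})\,dx$; and in the Dirichlet series on the right, $\cos(\tfrac{\pi s}{2}+2\pi ak)=\cos(\tfrac{\pi s}{2}+\pi k)=(-1)^k\cos(\tfrac{\pi s}{2})$, so that series equals $\cos(\tfrac{\pi s}{2})\sum_{k\geq1}(-1)^k k^{-s}=-(1-2^{1-s})\zeta(s)\cos(\tfrac{\pi s}{2})$ by the standard evaluation of the Dirichlet eta function. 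Thus, for $\mathrm{Re}(s)>2$, \eqref{ram with a eqn} becomes
\begin{align*}
\alpha^{s/2}\!\left\{\frac{\Gamma(s)\zeta(s)}{(2\pi)^s}-\cos\!\Big(\frac{\pi s}{2}\Big)\sum_{n=1}^\infty\frac{n^{s-1}}{e^{n\alpha}+1}\right\}
&=\beta^{s/2}\!\left\{-(1-2^{1-s})\cos\!\Big(\frac{\pi s}{2}\Big)\frac{\Gamma(s)\zeta(s)}{(2\pi)^s}+\sum_{n=1}^\infty\frac{(n-1/2)^{s-1}}{e^{(n-1/2)\beta}-1}\right.\\
&\qquad\left.{}+\sin\!\Big(\frac{\pi s}{2}\Big)\,\mathrm{PV}\!\int_0^\infty\frac{x^{s-1}}{e^{2\pi x}+1}\cot\!\Big(\frac{\beta x}{2}\Big)dx\right\}.
\end{align*}

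Next I would observe that, with $a=\tfrac12$ now fixed, both sides of this identity extend to analytic functions of $s$ on the half-plane $\mathrm{Re}(s)>1$: the two exponential series converge for every $s$, the eta series converges for $\mathrm{Re}(s)>0$, and the binding constraint is only the behaviour of the principal value integral near $x=0$, where the integrand is $O\!\left(x^{\mathrm{Re}(s)-2}\right)$ (the simple poles of the cotangent at the points $x=2\pi k/\beta$ being harmless, since the factor $x^{s-1}(e^{2\pi x}+1)^{-1}$ is smooth there). As Theorem \ref{ram with a} gives the identity on $\mathrm{Re}(s)>2$, it therefore holds by analytic continuation for all $s$ with $\mathrm{Re}(s)>1$; in particular we may take $s=2m$ for every $m\in\mathbb{N}$, including $m=1$, even though $s=2$ is excluded from the hypothesis of the theorem. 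At $s=2m$ we have $\cos(\tfrac{\pi s}{2})=(-1)^m$ and, decisively, $\sin(\tfrac{\pi s}{2})=0$, so the principal value integral disappears entirely and no residual principal-value subtlety survives.

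Finally I would substitute $s=2m$ into the displayed identity, invoke Euler's formula $\zeta(2m)=(-1)^{m+1}(2\pi)^{2m}B_{2m}/\bigl(2\,(2m)!\bigr)$, which reduces $\Gamma(2m)\zeta(2m)/(2\pi)^{2m}$ to the tidy quantity $(-1)^{m+1}B_{2m}/(4m)$, and then multiply the resulting identity through by $(-1)^{m+1}$ and rearrange, using $(-1)^{m+1}\beta^m=-(-\beta)^m$ and $1-2^{1-2m}=-(2^{1-2m}-1)$, to land exactly on \eqref{a half s 2m eqn}. The computation itself is entirely sign bookkeeping, so there is no serious obstacle; the only two points that deserve attention are the analytic continuation that lets in the case $m=1$, and the (fortunate) vanishing of the $\sin(\tfrac{\pi s}{2})$ prefactor of the principal value integral at every even integer $s$, which is precisely what renders the corollary free of any integral term.
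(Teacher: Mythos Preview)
Your proof is correct and follows exactly the paper's approach: set $a=\tfrac12$ and $s=2m$ in Theorem~\ref{ram with a}, note that the principal-value term vanishes because of the factor $\sin(\pi s/2)$, and simplify using Euler's formula for $\zeta(2m)$. Your analytic-continuation argument extending the identity from $\mathrm{Re}(s)>2$ down to $\mathrm{Re}(s)>1$ so as to capture the case $m=1$ is a welcome addition that the paper's one-line proof glosses over.
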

Equation \eqref{a half s 2m eqn} is a ``hybrid'' analogue of the following transformation formula for the Eisenstein series over SL$_2(\mathbb{Z})$ in that the role of $n$ in the first series is played by $n-1/2$ in the second.
\begin{align*}
\alpha^m\sum_{n=1}^\infty\frac{n^{2m-1}}{e^{n\alpha}-1}-(-\beta)^m\sum_{n=1}^\infty\frac{n^{2m-1}}{e^{n\beta}-1}=\left\{\alpha^m-(-\beta)^m\right\}\frac{B_{2m}}{4m}.
\end{align*}
As an application of Corollary \ref{a half s 2m}, we obtain closed-form evaluations of two infinite series, which, to the best of our knowledge, are new.
\begin{corollary}\label{close form}
For any odd positive integer $m$ greater than $1$,
\begin{align}\label{1stcor}
\sum_{n=1}^\infty\frac{n^{2m-1}}{e^{2n\pi}+1}-\frac{1}{2^{2m-1}}\sum_{n=1}^\infty\frac{n^{2m-1}}{e^{n\pi}-1}
=-(1+2^{1-2m})\frac{B_{2m}}{4m}.
\end{align}
Moreover,
\begin{align}\label{2ndcor}
\sum_{n=1}^\infty\frac{n}{e^{2n\pi}+1}-\frac{1}{2}\sum_{n=1}^\infty\frac{n}{e^{n\pi}-1}=-\frac{3}{48}+\frac{1}{8\pi},
\end{align}
and hence, at least one of the two series is transcendental. 
\end{corollary}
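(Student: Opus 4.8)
The plan is to specialize Corollary~\ref{a half s 2m} to $\alpha=\beta=2\pi$ (which is compatible with $\alpha\beta=4\pi^2$ and places us at the fixed point of the underlying modular transformation) and then to rewrite the Lambert-type series in the argument $n-\tfrac12$ as ordinary Lambert series in integer multiples of $\pi$ and $2\pi$.

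For \eqref{1stcor} I would take $m$ odd and $>1$, so that $(-\beta)^m=-(2\pi)^m$. Substituting $\alpha=\beta=2\pi$ into \eqref{a half s 2m eqn} and dividing by $(2\pi)^m$ gives
\begin{equation*}
\sum_{n=1}^\infty\frac{n^{2m-1}}{e^{2n\pi}+1}-\sum_{n=1}^\infty\frac{(n-1/2)^{2m-1}}{e^{(2n-1)\pi}-1}=-\frac{1}{2^{2m-1}}\cdot\frac{B_{2m}}{4m}.
\end{equation*}
Next I would re-index the second sum by writing $n-\tfrac12=\ell/2$, so that $\ell=2n-1$ runs over the positive odd integers and the sum equals $2^{-(2m-1)}\sum \ell^{2m-1}/(e^{\ell\pi}-1)$ taken over odd $\ell$; splitting this as the sum over all $\ell\ge1$ minus the sum over even $\ell=2j$ (whose contribution is $2^{2m-1}\sum_{j\ge1} j^{2m-1}/(e^{2j\pi}-1)$) yields
\begin{equation*}
\sum_{n=1}^\infty\frac{(n-1/2)^{2m-1}}{e^{(2n-1)\pi}-1}=\frac{1}{2^{2m-1}}\sum_{\ell=1}^\infty\frac{\ell^{2m-1}}{e^{\ell\pi}-1}-\sum_{j=1}^\infty\frac{j^{2m-1}}{e^{2j\pi}-1}.
\end{equation*}
The remaining ingredient is the evaluation $\sum_{n=1}^\infty n^{2m-1}/(e^{2\pi n}-1)=B_{2m}/(4m)$, valid for odd $m\ge 3$: this is exactly the transformation formula for the Eisenstein series on $\mathrm{SL}_2(\mathbb{Z})$ recorded just after Corollary~\ref{a half s 2m}, specialized to $\alpha=\beta=2\pi$ (equivalently, the vanishing $E_{2m}(i)=0$ for odd $m$). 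Substituting the last two displays into the first and collecting terms produces \eqref{1stcor}.

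For \eqref{2ndcor} I would run the identical computation with $m=1$, for which Corollary~\ref{a half s 2m} still applies. The specialization $\alpha=\beta=2\pi$ together with the same re-indexing of the series in $n-\tfrac12$ gives
\begin{equation*}
\sum_{n=1}^\infty\frac{n}{e^{2n\pi}+1}-\frac{1}{2}\sum_{n=1}^\infty\frac{n}{e^{n\pi}-1}+\sum_{j=1}^\infty\frac{j}{e^{2j\pi}-1}=-\frac{1}{48},
\end{equation*}
and then I would insert the classical value $\sum_{n=1}^\infty n/(e^{2\pi n}-1)=\tfrac1{24}-\tfrac1{8\pi}$ (a consequence of the quasi-modular transformation of $E_2$, equivalently of the transformation of $\log\eta$), obtaining $-\tfrac1{48}-\tfrac1{24}+\tfrac1{8\pi}=-\tfrac{3}{48}+\tfrac1{8\pi}$, which is \eqref{2ndcor}. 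For the transcendence assertion, write $S_1=\sum_{n\ge1} n/(e^{2n\pi}+1)$ and $S_2=\sum_{n\ge1} n/(e^{n\pi}-1)$; since $\pi$ is transcendental (Lindemann), the number $-\tfrac1{16}+\tfrac1{8\pi}=S_1-\tfrac12 S_2$ is transcendental, so $S_1$ and $S_2$ cannot both be algebraic.

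I do not expect a genuine obstacle: the one place needing care is the bookkeeping with the factor $2^{2m-1}$ when the series in $n-\tfrac12$ is separated into its odd- and even-index parts, together with correctly recording the sign $(-\beta)^m=-(2\pi)^m$ for odd $m$ (used for \eqref{1stcor}) versus the case $m=1$ (used for \eqref{2ndcor}). Everything else reduces to inserting the two known closed forms for $\sum_{n\ge1} n^{2m-1}/(e^{2\pi n}-1)$, namely $B_{2m}/(4m)$ for odd $m\ge3$ and $\tfrac1{24}-\tfrac1{8\pi}$ for $m=1$.
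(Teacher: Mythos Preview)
Your proposal is correct and follows essentially the same route as the paper: specialize Corollary~\ref{a half s 2m} at $\alpha=\beta=2\pi$, rewrite the sum over $n-\tfrac12$ as a sum over odd integers and split it into all integers minus even integers, and then insert the classical evaluations $\sum_{n\ge1} n^{2m-1}/(e^{2\pi n}-1)=B_{2m}/(4m)$ for odd $m>1$ (Glaisher) and $\sum_{n\ge1} n/(e^{2\pi n}-1)=\tfrac1{24}-\tfrac1{8\pi}$ (Schl\"omilch). The only addition relative to the paper is that you spell out the transcendence argument via Lindemann, which the paper leaves implicit.
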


First,  let $s=2m$ and $a=1/4$ in \eqref{ram with a eqn}, then let $s=2m$ and $a=3/4$ in \eqref{ram with a eqn}, and add the corresponding sides of the resulting identities. This leads to the transformation between just the infinite series which we record below in \eqref{a=1/4and 3/4 eqn}. Similarly subtracting the corresponding sides of the two resulting identities expresses a principal value integral in terms of a Lambert series, which is given in \eqref{a=1/4and 3/4 eqn1}.
\begin{corollary}\label{a=1/4and 3/4}
Let $\mathrm{Re}(\alpha),\mathrm{Re}(\beta)>0$ such that $\alpha\beta=4\pi^2$. For $m\in\mathbb{N}, m>1$,
\begin{align}\label{a=1/4and 3/4 eqn}
&\alpha^m2^{4m-1}\left\{\frac{\Gamma(2m)\zeta(2m)}{(2\pi)^{2m}}+(-1)^{m+1}\sum_{n=1}^\infty\frac{n^{2m-1}}{e^{2n\alpha}+1}\right\}\nonumber\\
&=\beta^m\left\{(-1)^{m+1}\frac{\Gamma(2m)\zeta(2m)(2^{2m-1}-1)}{(2\pi)^{2m}}+\sum_{n=1}^\infty\frac{(4n-1)^{2m-1}}{e^{(4n-1)\beta/4}-1}+\sum_{n=1}^\infty\frac{(4n-3)^{2m-1}}{e^{(4n-3)\beta/4}-1}\right\},
\end{align}
and, 
\begin{align}\label{a=1/4and 3/4 eqn1}
 \mathrm{PV}\int_0^\infty\mathrm{sech}(2\pi x)\cot\left(2\beta x\right)x^{2m-1}dx=(-1)^{m+1}4^{1-2m}\sum_{n=1}^\infty\frac{\chi(n)n^{2m-1}}{e^{n\beta}-1},
\end{align}
where $\chi(n)$ is a Dirichlet character modulo $4$ given by
\begin{align}\label{character}
\chi(n)=
\begin{cases}
	1, \qquad \mathrm{if}\ n\equiv1\pmod{4},\\
	-1, \hspace{5mm} \mathrm{if}\ n\equiv3\pmod{4},\\
	0, \qquad \mathrm{if}\ n\equiv0, 2\pmod{4}.
\end{cases}
\end{align}
\end{corollary}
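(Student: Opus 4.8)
The plan is to specialize Theorem~\ref{ram with a} to $s=2m$ at the two values $a=\tfrac14$ and $a=\tfrac34$ and then add (respectively subtract) the resulting identities, exactly as outlined in the sentence preceding the statement. Since $m>1$ we have $\mathrm{Re}(s)=2m>2$, so \eqref{ram with a eqn} applies. The crucial simplification at $s=2m$ is that all the transcendental weights degenerate: $e^{\pm\pi i s/2}=(-1)^m$ and $\cos\!\big(\tfrac{\pi s}{2}+2\pi ak\big)=(-1)^m\cos(2\pi ak)$.

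First I would reduce the series factors. For $a=\tfrac14$ one has $e^{-2\pi i a}=-i$ and $e^{2\pi i a}=i$, hence
$$\frac{e^{\pi i s/2}}{e^{n\alpha-2\pi ia}-1}+\frac{e^{-\pi i s/2}}{e^{n\alpha+2\pi ia}-1}=(-1)^m\left(\frac{1}{-ie^{n\alpha}-1}+\frac{1}{ie^{n\alpha}-1}\right)=\frac{2(-1)^{m+1}}{e^{2n\alpha}+1},$$
and the identical computation for $a=\tfrac34$ (where $e^{-2\pi ia}=i$, $e^{2\pi ia}=-i$) yields the same quantity; thus the left-hand side of \eqref{ram with a eqn} equals $\alpha^m\big\{\tfrac{\Gamma(2m)\zeta(2m)}{(2\pi)^{2m}}+(-1)^{m+1}\sum_{n\ge1}\tfrac{n^{2m-1}}{e^{2n\alpha}+1}\big\}$ for both values of $a$. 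On the right, $\sum_{k\ge1}k^{-2m}\cos(\pi k/2)=\sum_{k\ge1}k^{-2m}\cos(3\pi k/2)=\sum_{j\ge1}(-1)^j(2j)^{-2m}=-\tfrac{2^{2m-1}-1}{2^{4m-1}}\zeta(2m)$ (using $\sum_{j\ge1}(-1)^{j-1}j^{-w}=(1-2^{1-w})\zeta(w)$), so the ``$\zeta$-block'' on the right equals $(-1)^{m+1}\tfrac{2^{2m-1}-1}{2^{4m-1}}\tfrac{\Gamma(2m)\zeta(2m)}{(2\pi)^{2m}}$ for both; and writing $n-\tfrac14=\tfrac{4n-1}{4}$, $n-\tfrac34=\tfrac{4n-3}{4}$ turns the Lambert term into $4^{1-2m}\sum_{n\ge1}\tfrac{(4n-1)^{2m-1}}{e^{(4n-1)\beta/4}-1}$ for $a=\tfrac14$ and into the analogue with $4n-3$ for $a=\tfrac34$.

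The only place the two specializations genuinely differ is the principal value integral. Using $\dfrac{1}{-ie^{2\pi x}-1}-\dfrac{1}{ie^{2\pi x}-1}=\dfrac{2ie^{2\pi x}}{e^{4\pi x}+1}=i\,\mathrm{sech}(2\pi x)$, the bracketed integrand of \eqref{ram with a eqn} becomes $(-1)^m i\,\mathrm{sech}(2\pi x)$ when $a=\tfrac14$ and its negative when $a=\tfrac34$, so, together with the prefactor $-\tfrac1{2i}$, the integral term equals $\mp\tfrac{(-1)^m}{2}\,\mathrm{PV}\!\int_0^\infty x^{2m-1}\mathrm{sech}(2\pi x)\cot(\tfrac12\beta x)\,dx$ for $a=\tfrac14$ (resp. $a=\tfrac34$). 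Hence adding the two specialized identities cancels the integral outright and doubles everything else; multiplying through by $2^{4m-2}$ and simplifying with $4^{1-2m}2^{4m-2}=1$ and $2\cdot2^{4m-2}=2^{4m-1}$ collapses the remaining powers of $2$ into exactly \eqref{a=1/4and 3/4 eqn}. Subtracting the two identities annihilates the (coincident) left sides and $\zeta$-blocks and doubles the integral term, giving $(-1)^m\,\mathrm{PV}\!\int_0^\infty x^{2m-1}\mathrm{sech}(2\pi x)\cot(\tfrac12\beta x)\,dx=4^{1-2m}\big(\sum_{n\ge1}\tfrac{(4n-1)^{2m-1}}{e^{(4n-1)\beta/4}-1}-\sum_{n\ge1}\tfrac{(4n-3)^{2m-1}}{e^{(4n-3)\beta/4}-1}\big)=-4^{1-2m}\sum_{n\ge1}\tfrac{\chi(n)n^{2m-1}}{e^{n\beta/4}-1}$, the last equality because $4n-1\equiv3$, $4n-3\equiv1\pmod4$, and $\chi$ vanishes on the even integers. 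Since the resulting identity involves only $\beta$ and holds for every $\beta$ with $\mathrm{Re}(\beta)>0$, replacing $\beta$ by $4\beta$ converts $\cot(\tfrac12\beta x)$ into $\cot(2\beta x)$ and $e^{n\beta/4}$ into $e^{n\beta}$, which is precisely \eqref{a=1/4and 3/4 eqn1}.

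Analytically nothing here is hard; the entire content is careful bookkeeping. The one step that deserves attention is tracking signs between the $a=\tfrac14$ and $a=\tfrac34$ evaluations: the point is that $e^{-3\pi i/2}=i$ and $e^{3\pi i/2}=-i$ force the Lambert-series and $\zeta$-block contributions to be \emph{identical} for the two values of $a$ while flipping the sign of the $\mathrm{sech}$-integrand, and it is exactly this configuration that makes addition kill the integral and subtraction kill everything but the integral.
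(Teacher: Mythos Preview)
Your proof is correct and follows essentially the same route as the paper: specialize Theorem~\ref{ram with a} at $s=2m$, $a=\tfrac14$ and $a=\tfrac34$, observe that the left-hand sides and the $\zeta$-blocks coincide while the $\mathrm{sech}$-integral flips sign, then add (to kill the integral and obtain \eqref{a=1/4and 3/4 eqn}) and subtract (to isolate the integral), finally replacing $\beta\mapsto 4\beta$ in the $\alpha$-free identity to get \eqref{a=1/4and 3/4 eqn1}. The only cosmetic difference is that the paper records the $a=\tfrac34$ cosine sum as $\sum_{k\ge1}(-1)^k\cos(\pi k/2)k^{-2m}$ via $\cos(3\pi k/2)=(-1)^k\cos(\pi k/2)$, whereas you note directly that both cosine sums collapse to $\sum_{j\ge1}(-1)^j(2j)^{-2m}$; the content is the same.
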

We note that the series on the right-hand side of \eqref{a=1/4and 3/4 eqn1} cannot be treated using \cite[Theorem 1]{bradley2002}.

We now transition towards to the second goal of our paper. Recently, the current authors, along with Kesarwani \cite{dkk1}, extensively studied a more general Lambert series
\begin{align}\label{ls}
\sum_{n=1}^\infty\frac{n^s}{e^{ny}-1}=\sum_{n=1}^\infty\sigma_s(n)e^{-ny} \quad (s\in\mathbb{C},\ \mathrm{Re}(y)>0)
\end{align}
than the ones appearing in \eqref{rameqn}. Here $\sigma_s(n):=\sum_{d|n}d^s$ is the generalized divisor function. Among other things, they obtained \cite[Theorem 2.5]{dkk1} an explicit transformation for any $\mathrm{Re}(s)>-1$ and Re$(y)>0$, which is given next.
\begin{align}\label{maineqn}
&\sum_{n=1}^\infty  \sigma_s(n)e^{-ny}+\frac{1}{2}\left(\left(\frac{2\pi}{y}\right)^{1+s}\mathrm{cosec}\left(\frac{\pi s}{2}\right)+1\right)\zeta(-s)-\frac{1}{y}\zeta(1-s)\nonumber\\
&=\frac{2\pi}{y\sin\left(\frac{\pi s}{2}\right)}\sum_{n=1}^\infty \sigma_{s}(n)\Bigg(\frac{(2\pi n)^{-s}}{\Gamma(1-s)} {}_1F_2\left(1;\frac{1-s}{2},1-\frac{s}{2};\frac{4\pi^4n^2}{y^2} \right) -\left(\frac{2\pi}{y}\right)^{s}\cosh\left(\frac{4\pi^2n}{y}\right)\Bigg),
\end{align}
where ${}_1F_2(a;b,c;z):=\sum_{n=0}^\infty\frac{(a)_nz^n}{(b)_n(c)_nn!},\ z\in\mathbb{C},\ (a)_n=\frac{\Gamma(a+n)}{\Gamma(a)}$ is the generalized hypergeometric function.

The explicit transformations of the type \eqref{maineqn} are always desirable due to their possible applications in analytic number theory, especially in the theory of zeta functions. See the recent paper \cite{bdg} for a beautiful application of \eqref{maineqn} in the theory of $\zeta(s)$ by applying the operator $\frac{d}{ds}\big|_{s=0}$ on both sides, thereby resulting in a transformation of the Lambert series of the logarithm, that is, $\displaystyle\sum_{n=1}^\infty\frac{\log(n)}{e^{ny}-1}$.

The authors of \cite[Theorem 2.5]{dkk1} also analytically continued \eqref{maineqn} to  Re$(s)>-2m-3,\ m\in\mathbb{N}\cup\{0\}$. Then, as a special case, they not only obtained Ramanujan's formula \eqref{rameqn} and the transformation formula of the logarithm of the Dedekind eta function but also new transformations when $s$ is an even integer. For example, they established an explicit result \cite[Theorem 2.11]{dkk1} for \eqref{ls} when $s=2m,\ m>0$. We record it below in \eqref{dkka=2m eqn}. It comprises two special functions Shi$(z)$ and Chi$(z)$, known as the hyperbolic sine and cosine integrals, respectively defined by \cite[p.~150, Equation (6.2.15), (6.2.16)]{nist}
\begin{align}\label{shichi}
	\mathrm{Shi}(z):=\int_0^z\frac{\sinh(t)}{t}\ dt,\hspace{4mm}
	\mathrm{Chi}(z):=\gamma+\log(z)+\int_0^z\frac{\cosh(t)-1}{t}\ dt,
\end{align}
where $\gamma$ is Euler's constant. Let $m\in\mathbb{N}$. Then for $\mathrm{Re}(y)>0$, we have  \cite[Theorem 2.11]{dkk1}
{\allowdisplaybreaks\begin{align}\label{dkka=2m eqn}
\sum_{n=1}^\infty \sigma_{2m}(n)e^{-ny}-\frac{(2m)!}{y^{2m+1}}\zeta(2m+1)+\frac{B_{2m}}{2my}&=(-1)^m\frac{2}{\pi}\left(\frac{2\pi}{y}\right)^{2m+1}\sum_{n=1}^\infty\sigma_{2m}(n)\Bigg\{\sinh\left(\frac{4\pi^2n}{y}\right)\mathrm{Shi}\left(\frac{4\pi^2n}{y}\right)\nonumber\\
&\quad-\cosh\left(\frac{4\pi^2n}{y}\right)\mathrm{Chi}\left(\frac{4\pi^2n}{y}\right)+\sum_{j=1}^m(2j-1)!\left(\frac{4\pi^2n}{y}\right)^{-2j}\Bigg\}.
\end{align}}
The modular transformation for $\sum_{n=1}^{\infty}\sigma_{2m+1}(n)e^{-ny}$ transforms it into \begin{equation}\label{diff}
	\sum_{n=1}^\infty\sigma_{2m+1}(n)e^{-4\pi^2n/y}=-\sum_{n=1}^\infty\sigma_{2m+1}(n)\Bigg\{\sinh\left(\frac{4\pi^2n}{y}\right)-\cosh\left(\frac{4\pi^2n}{y}\right)\Bigg\}.
\end{equation}
In view of this, it is important to note that while going from $s=2m+1$ to $s=2m$ in $\sum_{n=1}^\infty  \sigma_s(n)e^{-ny}$, the expression $\sinh\left(\frac{4\pi^2n}{y}\right)-\cosh\left(\frac{4\pi^2n}{y}\right)$ in \eqref{diff} is to be replaced by the corresponding one on the right-hand side of \eqref{dkka=2m eqn}. (Note that the finite sum $\sum_{j=1}^m(2j-1)!\left(\frac{4\pi^2n}{y}\right)^{-2j}$ in the summand of the series on the right-hand side of \eqref{diff} is essential for its convergence; for details, see the proof of \eqref{dkka=2m eqn} in Section \ref{appl}.)

Equation \eqref{dkka=2m eqn} readily gives the following asymptotic estimate for $\sum_{n=1}^\infty \sigma_{2m}(n)e^{-ny}$:
\begin{corollary}\label{asym for general m}
Let $m\in\mathbb{N}$. As $y\to0$ in $|\arg(y)|<\pi/2$, 
\begin{align}\label{asym for general m eqn}
\sum_{n=1}^\infty \sigma_{2m}(n)e^{-ny}&=\frac{(2m)!}{y^{2m+1}}\zeta(2m-1)-\frac{B_{2m}}{2my}-\frac{2(-1)^m}{\pi(2\pi)^{2m-1}}\sum_{j=1}^{r+1}\frac{\Gamma(2m+2j)\zeta(2m+2j)\zeta(2j)}{(2\pi)^{4j}}y^{2j-1}+O\left(y^{2r+3}\right).
\end{align}
\end{corollary}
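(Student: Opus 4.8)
The plan is to read off the small-$y$ behaviour of the right-hand side of the exact transformation \eqref{dkka=2m eqn}. Rearranging that identity gives
$$\sum_{n=1}^{\infty}\sigma_{2m}(n)e^{-ny}=\frac{(2m)!}{y^{2m+1}}\zeta(2m+1)-\frac{B_{2m}}{2my}+R(y),$$
where $R(y):=(-1)^{m}\frac{2}{\pi}(2\pi/y)^{2m+1}\sum_{n\ge1}\sigma_{2m}(n)\,g(4\pi^2n/y)$ and, in view of the summand on the right of \eqref{dkka=2m eqn},
$$g(x):=\sinh(x)\,\mathrm{Shi}(x)-\cosh(x)\,\mathrm{Chi}(x)+\sum_{j=1}^{m}(2j-1)!\,x^{-2j}.$$
Since $y\to 0$ in $|\arg(y)|<\pi/2$, the argument $4\pi^2n/y$ of $g$ tends to $\infty$ in the same sector, so the first task is to obtain the complete asymptotic expansion of $g(x)$ as $x\to\infty$, with a remainder uniform in $n$.

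From the definitions \eqref{shichi} one checks that $\mathrm{Chi}(x)+\mathrm{Shi}(x)=\mathrm{Ei}(x)$ and $\mathrm{Chi}(x)-\mathrm{Shi}(x)=\mathrm{Ei}(-x)$, where $\mathrm{Ei}$ is the exponential integral; hence
$$\sinh(x)\,\mathrm{Shi}(x)-\cosh(x)\,\mathrm{Chi}(x)=-\frac{1}{2}\bigl(e^{x}\,\mathrm{Ei}(-x)+e^{-x}\,\mathrm{Ei}(x)\bigr).$$
Inserting the classical asymptotic series for $\mathrm{Ei}(\pm x)$, the contributions of odd order in $1/x$ cancel while those of even order reinforce, so that $\sinh(x)\,\mathrm{Shi}(x)-\cosh(x)\,\mathrm{Chi}(x)\sim-\sum_{j\ge1}(2j-1)!\,x^{-2j}$ as $x\to\infty$. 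Therefore, for every integer $N\ge m$,
$$g(x)=-\sum_{j=m+1}^{N}(2j-1)!\,x^{-2j}+O\bigl(x^{-2N-2}\bigr),$$
the implied constant being uniform on each proper subsector $|\arg(x)|\le\pi/2-\delta$. This also re-explains the remark made in the Introduction: the finite sum $\sum_{j=1}^{m}(2j-1)!\,x^{-2j}$ is precisely what cancels the leading terms of the expansion and makes the series defining $R(y)$ converge.

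Next I would substitute $x=4\pi^2n/y$ into this expansion inside the series for $R(y)$. Since $g(4\pi^2n/y)=O\bigl((|y|/n)^{2m+2}\bigr)$ uniformly in $n$ and $\sigma_{2m}(n)=O(n^{2m})$, the series over $n$ converges absolutely, and the uniformity of the remainder above justifies the interchange of the sums over $n$ and $j$:
$$\sum_{n=1}^{\infty}\sigma_{2m}(n)\,g\!\left(\frac{4\pi^2n}{y}\right)=-\sum_{j=m+1}^{N}(2j-1)!\left(\frac{y}{4\pi^2}\right)^{2j}\sum_{n=1}^{\infty}\frac{\sigma_{2m}(n)}{n^{2j}}+O\bigl(y^{2N+2}\bigr),$$
the error being dominated by $|y|^{2N+2}\sum_{n\ge1}\sigma_{2m}(n)n^{-2N-2}$, which is finite because $N\ge m$. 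The inner sums are evaluated by the classical identity $\sum_{n\ge1}\sigma_{2m}(n)n^{-w}=\zeta(w)\zeta(w-2m)$, giving $\zeta(2j)\zeta(2j-2m)$.

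Finally I would reinstate the prefactor $(-1)^{m}\frac{2}{\pi}(2\pi/y)^{2m+1}$ of $R(y)$, re-index the sum by $j\mapsto j+m$ so that it runs over $j=1,\dots,N-m$, and collapse the powers of $2\pi$ using $(4\pi^2)^{k}=(2\pi)^{2k}$ together with $(2m+2j-1)!=\Gamma(2m+2j)$; this produces
$$R(y)=-\frac{2(-1)^{m}}{\pi(2\pi)^{2m-1}}\sum_{j=1}^{N-m}\frac{\Gamma(2m+2j)\,\zeta(2m+2j)\,\zeta(2j)}{(2\pi)^{4j}}\,y^{2j-1}+O\bigl(y^{2(N-m)+1}\bigr).$$
Adding back the terms $\frac{(2m)!}{y^{2m+1}}\zeta(2m+1)-\frac{B_{2m}}{2my}$ and choosing $N=m+r+1$, so that the error becomes $O\bigl(y^{2r+3}\bigr)$, yields \eqref{asym for general m eqn}. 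The one genuinely delicate point is the second step, namely making the interchange of summations rigorous: it requires the remainder in the asymptotic expansion of $g$ to be uniform in $n$ (equivalently, uniform for $x$ in the relevant subsector), together with the absolute convergence of $\sum_{n}\sigma_{2m}(n)n^{-2N-2}$. Everything else is bookkeeping with the zeta identity and the powers of $2\pi$.
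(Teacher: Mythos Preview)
Your proof is correct and follows the same overall strategy as the paper: start from the exact transformation \eqref{dkka=2m eqn}, obtain the large-$x$ asymptotic of the summand $g(x)$, insert it term by term, and close with the identity $\sum_{n\ge 1}\sigma_{2m}(n)n^{-w}=\zeta(w)\zeta(w-2m)$.

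The genuine difference lies in how the asymptotic of $g(x)$ is obtained. The paper invokes Theorem \ref{minusonelemma} and the symmetry \eqref{symmetry} to write $g(w)=(-1)^m w^{-2m}\mathfrak{R}_m(w,1)$ and then appeals to Lemma \ref{genraabeasyinfty} (a Watson-type lemma for Fourier integrals); for complex $y$ it passes through a Meijer $G$-function representation \eqref{headache solved}--\eqref{G bound} to secure the estimate. You instead use the elementary identities $\mathrm{Chi}\pm\mathrm{Shi}=\mathrm{Ei}(\pm\cdot)$ and the classical asymptotic expansion of $\mathrm{Ei}$, which gives the same expansion $g(x)\sim-\sum_{j>m}(2j-1)!x^{-2j}$ directly and uniformly on proper subsectors of $\mathrm{Re}(x)>0$. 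Your route avoids the generalized Raabe transform and the Meijer $G$-function altogether, at the cost of a small branch-cut bookkeeping for $\mathrm{Ei}$ off the real axis (the exponentially small $\pm i\pi e^{-x}$ discrepancy does not affect the asymptotic series). Conversely, the paper's route showcases the $\mathfrak{R}_z$ machinery developed in Section \ref{grct} and makes the uniformity in $n$ completely explicit. One minor remark: the bound you quote, $\sigma_{2m}(n)=O(n^{2m})$, should be $O(n^{2m+\epsilon})$, but this is harmless since the relevant series $\sum_n\sigma_{2m}(n)n^{-2N-2}$ already converges for $N\ge m$.
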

The case $m=1$ of the series on the left-hand side of \eqref{dkka=2m eqn} (or \eqref{asym for general m}) has the following interesting connection with the generation function for plane partitions studied by MacMahon \cite[p.~184]{andrews}:
\begin{align}\label{plain partition}
\sum_{n=1}^\infty \sigma_{2}(n)e^{-ny}=x\frac{d}{dx}\log(F(x)),
\end{align}
where $\displaystyle F(x):=\prod_{n=1}^\infty\frac{1}{(1-x^n)^n}$, with $y=\log(1/x)$, where $|x|<1$. In his work on finding the asymptotic estimate of $q(n)$, the number of plane partitions of a positive integer $n$, Wright \cite[Lemma 1]{wright} first found the asymptotic estimate of $F(x)$ as $x\to1^-$. His result on $F(x)$ readily follows from our Corollary \ref{asym for general m} and is rephrased in the following corollary.
\begin{corollary}\label{wright asym}
As $x\to1^-$, we have
\begin{align}\label{wright asym eqn}
F(x)=e^c(\log x)^{1/12}\exp\left(\frac{\zeta(3)}{\log^2x}\right)\exp\left(-\sum_{j=1}^{r+1}\delta_j(\log x)^{2j}\right)\left(1+O_r\left((\log x)^{2r+4}\right)\right),
\end{align}
where, $c$ is a constant, and
\begin{align}\label{delta j}
\delta_j:=\frac{\Gamma(2j+2)\zeta(2j+2)\zeta(2j)}{2\pi^2j(2\pi)^{4j}}.
\end{align}
\end{corollary}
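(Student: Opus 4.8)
The plan is to obtain \eqref{wright asym eqn} by integrating the logarithmic-derivative identity \eqref{plain partition} and substituting the $m=1$ case of Corollary \ref{asym for general m}. Put $y:=\log(1/x)$, so that $x\dfrac{d}{dx}=-\dfrac{d}{dy}$ and $x\to1^{-}$ corresponds to $y\to0^{+}$. Writing $G(y):=\log F(e^{-y})$, the identity \eqref{plain partition} becomes
\begin{equation*}
G'(y)=-\sum_{n=1}^{\infty}\sigma_2(n)e^{-ny}\qquad(y>0),
\end{equation*}
so the whole problem reduces to integrating the right-hand side and identifying the constant of integration.

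Next I would specialize \eqref{asym for general m eqn} (equivalently, let $y\to0^{+}$ in \eqref{dkka=2m eqn}) to $m=1$. Using $B_2=\tfrac16$, $\Gamma(2j+2)=(2j+1)!$, and the elementary identity $\dfrac{1}{\pi^{2}}\cdot\dfrac{\Gamma(2j+2)\zeta(2j+2)\zeta(2j)}{(2\pi)^{4j}}=2j\,\delta_j$ with $\delta_j$ as in \eqref{delta j}, this yields, as $y\to0^{+}$,
\begin{equation*}
\sum_{n=1}^{\infty}\sigma_2(n)e^{-ny}=\frac{2\zeta(3)}{y^{3}}-\frac{1}{12y}+\sum_{j=1}^{r+1}2j\,\delta_j\,y^{2j-1}+O\!\left(y^{2r+3}\right).
\end{equation*}
This part is pure bookkeeping.

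The \emph{crucial} step is the integration. Set
\begin{equation*}
P(y):=\frac{\zeta(3)}{y^{2}}+\frac{1}{12}\log y-\sum_{j=1}^{r+1}\delta_j\,y^{2j},
\end{equation*}
a term-by-term antiderivative of the explicit part of the expansion above, so that $\bigl(G-P\bigr)'(y)=O\!\left(y^{2r+3}\right)$ as $y\to0^{+}$. Since $y^{2r+3}$ is integrable at the origin and the implied constant is uniform on a fixed interval $(0,\varepsilon]$, the function $G-P$ extends continuously to $y=0$; call its value there $c$. Integrating from $0$ to $y$ then gives $G(y)-P(y)-c=\int_{0}^{y}\bigl(G-P\bigr)'(t)\,dt=O\!\left(y^{2r+4}\right)$, i.e.
\begin{equation*}
\log F(e^{-y})=\frac{\zeta(3)}{y^{2}}+\frac{1}{12}\log y-\sum_{j=1}^{r+1}\delta_j\,y^{2j}+c+O\!\left(y^{2r+4}\right).
\end{equation*}

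Finally I would exponentiate and undo the substitution $y=\log(1/x)$, using $\log^{2}x=y^{2}$, $(\log x)^{2j}=y^{2j}$, and $e^{O(y^{2r+4})}=1+O\!\left(y^{2r+4}\right)$ as $x\to1^{-}$; this produces \eqref{wright asym eqn}, with $e^{c}$ the constant appearing there (its explicit value is not needed). The main obstacle is precisely this integration step: one must check that the ``constant of integration'' is a genuine finite limit — which is where integrability of the tail $O(y^{2r+3})$ at the origin enters — and that the error term survives integration; both become transparent once the antiderivative is pinned down by $\int_{0}^{y}$ rather than by $\int_{\infty}^{y}$.
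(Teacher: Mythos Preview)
Your argument is correct and follows essentially the same route as the paper: specialize Corollary~\ref{asym for general m} to $m=1$, feed it into \eqref{plain partition} to get an expansion for the logarithmic derivative of $F$, integrate, and exponentiate. The only difference is cosmetic---you work in the variable $y=\log(1/x)$ whereas the paper stays in $x$---and you are in fact more careful than the paper in justifying that the $O(y^{2r+3})$ tail is integrable at the origin, so that the integration constant $c$ is a genuine finite limit and the error survives integration as $O(y^{2r+4})$.
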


It is important to note that Wright obtained the above result through a long calculation (see \cite[pp. 180-184]{wright} whereas it is a trivial consequence of our Corollary \ref{asym for general m} as shown in Section \ref{asymptotic}. On the other hand, the advantage of his method is that it gives a representation of the constant $c$ in terms of an integral, namely, $c=\displaystyle2\int_{0}^{\infty}\frac{y\log(y)}{e^{2\pi y}-1}\, dy$.

The Lambert series \eqref{ls}, whose special cases were considered in \eqref{rameqn} and \eqref{dkka=2m eqn}, has been studied by many mathematicians over the years. For a detailed survey, see \cite{dkk1}. One of the earliest mathematicians to study it was Wigert, who wrote several papers on this subject. In \cite{wigert}, Wigert examined the Lambert series when $0<s<1$. Later, Kuylenstierna \cite{ku} provided a simple proof of Wigert's result using double zeta $\zeta_2(s,\tau):=\sum_{m,n=0}^\infty\frac{1}{(m+n\tau)^s}$, Re$(s)>2$, $\tau\in\mathbb{C}\backslash(-\infty,0]$. However, both of them were interested only in the asymptotics of the series in \eqref{ls}, not in explicit transformations. In his work, Kuylenstierna essentially uses the Lipschitz summation formula \cite{lipschpaper}, namely, 
$0\leq a<1$, $\mathrm{Re}(s)>1$ and $\tau\in\mathbb{H}$,
\begin{align}\label{lipscheqn}
\sum_{n=1}^\infty\frac{e^{2\pi i\tau(n-a)}}{(n-a)^{1-s}}=\frac{\Gamma(s)}{(-2\pi i)^s}\sum_{k\in\mathbb{Z}}\frac{e^{2\pi iak}}{(k+\tau)^s}.
\end{align}
The Lipschitz summation formula has several nice applications and generalizations, for example,  see \cite{berndtacta, knopprobins, pw}. \eqref{lipscheqn} is usually proved using Poisson summation formula, for example, see \cite[p.~77--79]{rad}. For other proofs, one can look at the paper of V\'{a}gi \cite{vagi}.

It does not seem to be easy to get \eqref{dkka=2m eqn} as a special case of Ramanujan's formula \eqref{ramanujan gen eqn} or our generalization \eqref{ram with a eqn}, because one has to transform the Lambert series and the principal value integral on the right-hand side of \eqref{ramanujan gen eqn} into the series in \eqref{dkka=2m eqn} involving the special functions Shi$(z)$ and Chi$(z)$.

In this paper, we prove Theorem \ref{ram with a} and  \eqref{dkka=2m eqn} using the Lipschitz summation formula \eqref{lipscheqn}.
The proof of \eqref{dkka=2m eqn} through this approach involves a nice generalization of the following identity \cite[Theorem 2.2]{dgkm} 
\begin{align}\label{dgkmresult}
\sum_{n=1}^\infty\int_0^\infty\frac{t\cos(t)}{t^2+n^2u^2}\ dt=\frac{1}{2}\left\{\log\left(\frac{u}{2\pi}\right)-\frac{1}{2}\left(\psi\left(\frac{iu}{2\pi}\right)+\psi\left(-\frac{iu}{2\pi}\right)\right)\right\},
\end{align}
where  $\textup{Re}(u)>0$, and $\psi(z):=\G'(z)/\G(z)$ is the digamma function. In  \cite[Theorem 2.4]{dgkm}, the above identity was employed to obtain a two-parameter generalization of \eqref{rameqn}. Various applications of \eqref{dgkmresult} can be found in \cite{dgk, dkk1}.

Observe that the summand of the left-hand side of \eqref{dgkmresult} is the Raabe cosine transform defined for Re$(w)>0$ and $y>0$ by \cite[p.~144]{htf2}
\begin{align*}
\mathfrak{R}(y,w):=\int_0^\infty\frac{t\cos(yt)}{t^2+w^2}\ dt.
\end{align*}

Before stating the generalization of \eqref{dgkmresult}, that  is sought for, we first introduce  a new generalization of Raabe's cosine transform, valid for $\textup{Re}(w)>0, \textup{Re}(z)>0$ and $y>0$, by 
\begin{align}\label{genraabe}
\mathfrak{R}_z(y,w):=\frac{1}{2}\Gamma(2z+1)\int_0^\infty\left(\frac{1}{(t-iw)^{2z+1}}+\frac{1}{(t+iw)^{2z+1}}\right)\cos(yt)\ dt.
\end{align} 

It is easy to see that $\mathfrak{R}_0(y,w)=\mathfrak{R}(y,w)$. Also for $w>0$, $\mathfrak{R}_z(y,w)$ satisfies a nice identity, namely,
\begin{equation}\label{symmetry}
w^{2z}\mathfrak{R}_z(y,w)=y^{2z}\mathfrak{R}_z(w, y),
\end{equation}
which is easily seen by making the change of variable $t=xw/y$ in \eqref{genraabe}.

Our first result on $\mathfrak{R}_z(y,w)$ gives a closed-form evaluation of an infinite series containing $\mathfrak{R}_z(y,w)$.
\begin{theorem}\label{nagoyagen}
Let $\zeta(z,a)$ be the Hurwitz zeta function. For $\mathrm{Re}(w)>0$ and $\mathrm{Re}(z)>0$, we have
{\allowdisplaybreaks\begin{align}\label{nagoyageneqn}
\frac{2}{\Gamma(2z+1)}\sum_{n=1}^\infty \mathfrak{R}_z(2\pi n,w)&=\sum_{n=1}^\infty\int_0^\infty\left(\frac{1}{(v-iw)^{2z+1}}+\frac{1}{(v+iw)^{2z+1}}\right)\cos(2\pi nv)\ dv\nonumber\\
&=\frac{1}{2}\left\{\zeta(1+2z,iw)+\zeta(1+2z,-iw)-\frac{\cos(\pi z)}{zw^{2z}}\right\}.
\end{align}}
\end{theorem}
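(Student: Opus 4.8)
\medskip

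The plan is to evaluate the series $\sum_{n=1}^{\infty}\int_0^\infty\bigl((v-iw)^{-(2z+1)}+(v+iw)^{-(2z+1)}\bigr)\cos(2\pi nv)\,dv$ by exchanging the sum and integral and recognizing the resulting sum over $n$ as a known Fourier-type series, or—more cleanly—by interpreting each integral via a Hurwitz-zeta integral representation. First I would record the absolutely convergent integral representation of the Hurwitz zeta function valid for $\operatorname{Re}(s)>1$ and $\operatorname{Re}(a)>0$,
\[
\zeta(s,a)=\frac{1}{\Gamma(s)}\int_0^\infty \frac{t^{s-1}e^{-at}}{1-e^{-t}}\,dt,
\]
together with the elementary formula $\int_0^\infty e^{-xt}\cos(2\pi nv)\,\cdots$—actually the cleaner route is the Mellin/Fourier pair $\int_0^\infty v^{-(2z+1)\pm}\cdots$. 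Concretely, I would use the standard evaluation, for $0<\operatorname{Re}(\mu)<1$ (here $\mu = 1-2z$, so one first works in that strip and then continues in $z$),
\[
\int_0^\infty \frac{\cos(2\pi n v)}{(v\mp iw)^{\mu}}\,dv
=\frac{\Gamma(1-\mu)}{?}\,(\text{Hurwitz-type expression}),
\]
obtained by writing $(v\mp iw)^{-\mu}$ as a Gamma integral $\frac{1}{\Gamma(\mu)}\int_0^\infty u^{\mu-1}e^{-u(v\mp iw)}\,du$, swapping order, doing the elementary $v$-integral $\int_0^\infty e^{-uv}\cos(2\pi n v)\,dv = u/(u^2+4\pi^2 n^2)$, and then summing a partial-fractions-friendly series in $n$.

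\medskip

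The key computational step is the sum over $n$. After the interchanges one is led to a series of the form $\sum_{n=1}^\infty \frac{u}{u^2+4\pi^2 n^2}$, which by the classical cotangent expansion equals $\tfrac12\bigl(\tfrac{1}{2}\coth(u/2)-\tfrac1u\bigr)$; feeding $\tfrac12\coth(u/2)=\tfrac12+\tfrac{1}{e^u-1}$ back into the $u$-integral $\frac{1}{\Gamma(2z+1)}\int_0^\infty u^{2z}e^{\pm iuw}(\cdots)\,du$ reproduces exactly the Hurwitz-zeta integral $\zeta(1+2z,\mp iw)$ for the $\tfrac{1}{e^u-1}$ piece, while the constant $\tfrac12$ piece and the $-\tfrac1u$ piece combine to give the elementary term $-\dfrac{\cos(\pi z)}{2 z w^{2z}}$ via $\int_0^\infty u^{2z-1}e^{\pm iuw}\,du=\Gamma(2z)(\mp iw)^{-2z}$ and $(iw)^{-2z}+(-iw)^{-2z}=2\cos(\pi z)w^{-2z}$. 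Care is needed that $\operatorname{Re}(w)>0$ forces $\mp iw$ into the correct half-plane for the Hurwitz representation; this is where the hypothesis $\operatorname{Re}(w)>0$ is genuinely used, and one may need to rotate the $u$-contour slightly or invoke analytic continuation in $w$ from the wedge.

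\medskip

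An alternative, perhaps cleaner, organization avoids the triple interchange: since $\zeta(1+2z,\pm iw)=\sum_{n=0}^\infty (n\pm iw)^{-(1+2z)}$ for $\operatorname{Re}(z)>0$ (with $\mp iw$ lying off the negative real axis), one can instead start from the right-hand side, apply the Lipschitz summation formula \eqref{lipscheqn} — which is exactly the tool the paper advertises — with $a=0$, $\tau=\pm iw/?$ suitably scaled, to convert $\sum_k (k+\tau)^{-s}$ into $\sum_{n\ge1} n^{s-1}e^{2\pi i n\tau}$, and then recognize the latter, after an Abel/Fourier manipulation, as the integral $\int_0^\infty v^{-(2z+1)}\cos(2\pi n v)\,dv$ type expression summed over $n$; the separate handling of the $k=0$ term in $\zeta(1+2z,\pm iw)$ produces the extra $-\dfrac{\cos(\pi z)}{2zw^{2z}}$ term. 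I expect the main obstacle in either route to be justifying the interchange of summation and integration near $v=0$, where $(v\pm iw)^{-(2z+1)}$ is bounded (thanks again to $w$ having positive real part) but the $n$-sum of the cosine integrals converges only conditionally; I would handle this by first proving the identity for $\tfrac12<\operatorname{Re}(z)<1$ where everything is absolutely convergent after one integration by parts, and then extending to all $\operatorname{Re}(z)>0$ by analyticity of both sides (the left side is analytic since $\mathfrak{R}_z(2\pi n,w)$ decays rapidly in $n$, the right side by standard properties of $\zeta(s,a)$).
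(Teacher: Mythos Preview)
Your approach is genuinely different from the paper's, and it runs into the very obstruction the paper warns about just before stating the theorem: after your interchanges the resulting integral diverges. Concretely, once you write $(v\mp iw)^{-(2z+1)}=\frac{1}{\Gamma(2z+1)}\int_0^\infty u^{2z}e^{-u(v\mp iw)}\,du$, swap $u$ and $v$, evaluate the $v$-integral, and then sum over $n$ via $\sum_{n\ge1}\frac{u}{u^{2}+4\pi^{2}n^{2}}=\frac{1}{4}\coth(u/2)-\frac{1}{2u}=\frac{1}{4}+\frac{1}{2(e^{u}-1)}-\frac{1}{2u}$, you are left with $\frac{2}{\Gamma(2z+1)}\int_0^\infty u^{2z}\cos(uw)\bigl(\tfrac14+\tfrac{1}{2(e^{u}-1)}-\tfrac{1}{2u}\bigr)\,du$. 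The $\tfrac14$-piece contributes $\sim \tfrac14 u^{2z}\cos(uw)$ at infinity, which is not integrable (even conditionally) for any $z$ with $\mathrm{Re}(z)\ge 0$; the $-\tfrac{1}{2u}$-piece requires $\mathrm{Re}(z)>0$ for convergence at $u=0$. There is no strip in which all three pieces converge simultaneously, so your claim that ``the constant $\tfrac12$ piece and the $-\tfrac1u$ piece combine to give the elementary term'' cannot be carried out as written, and your proposed fallback region $\tfrac12<\mathrm{Re}(z)<1$ is in fact worse, not better. One integration by parts in $v$ does not cure this; it controls the decay in $n$ of each $\mathfrak{R}_z(2\pi n,w)$, not the divergence of the $u$-integral after the $\coth$ step.

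This is exactly why the paper takes a different route: it applies Guinand's Poisson summation formula (Theorem~\ref{Guind-pois}) to $f(v)=(v-iw)^{-(2z+1)}+(v+iw)^{-(2z+1)}$, which yields
\[
\sum_{n\ge1}g(n)=\sum_{n\ge1}f(n)-\int_0^\infty f(v)\,dv+\int_0^\infty g(v)\,dv,
\]
where $g$ is the cosine transform of $f$. The first two terms on the right are immediate (they give $\zeta(1+2z,\pm iw)$ and the $\cos(\pi z)/(zw^{2z})$ term), while the last term is the double integral $\int_0^\infty\mathfrak{R}_z(2\pi v,w)\,dv$, which is handled separately as Theorem~\ref{intiszero} via an $e^{-v^2/N}$ regularization and careful asymptotics of ${}_1F_1$ and ${}_2F_2$. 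In effect, Guinand's formula isolates the divergent ``$\tfrac14$-piece'' of your computation as a single well-defined improper double integral, whose value $-\frac{\sin(\pi z)}{2w^{2z+1}}$ (nonzero in general) is precisely what is lost when one naively interchanges. Your Lipschitz-summation alternative might be made to work, but as written it is too vague at the crucial step (``after an Abel/Fourier manipulation''), and the same boundary term would have to be accounted for there as well.
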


Note that this result is not straightforward to obtain as one cannot interchange the order of the summation and integration as doing so leads to a divergent integral. The primary tool to prove this result is Guinand's generalization of Poisson's summation formula \cite[Theorem 1]{apg1}; see Theorem \ref{Guind-pois}. 

The generalized Raabe cosine transform $\mathfrak{R}_z(y,w)$ itself can be evaluated in terms of exponential integral functions and incomplete gamma functions which are not so popular. However, the beauty of Theorem \ref{nagoyagen} is that the infinite sum of $\mathfrak{R}_z(y,w)$ can be evaluated in terms of the well-known functions such as the Hurwitz zeta function $\zeta(z,a)$ and $\cos(z)$.

An immediate consequence of Theorem \ref{nagoyagen} is
\begin{corollary}\label{cornagoya}
Equation \eqref{dgkmresult} holds true.
\end{corollary}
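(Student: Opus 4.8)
The plan is to deduce Corollary \ref{cornagoya} directly from Theorem \ref{nagoyagen} by specializing to $z=0$. Since the transform $\mathfrak{R}_z(y,w)$ is defined only for $\mathrm{Re}(z)>0$ while \eqref{dgkmresult} corresponds to $z=0$, the first step is to check that both sides of \eqref{nagoyageneqn} extend analytically to a neighborhood of $z=0$ and then to pass to the limit $z\to 0$. On the left-hand side, the summand of $\sum_{n\ge 1}\int_0^\infty\big((v-iw)^{-1-2z}+(v+iw)^{-1-2z}\big)\cos(2\pi n v)\,dv$ formally becomes $\int_0^\infty 2v\cos(2\pi n v)/(v^2+w^2)\,dv = 2\mathfrak{R}(2\pi n, w)$, i.e.\ twice the summand of \eqref{dgkmresult} with $u = 2\pi$ replaced appropriately; so I expect the left side at $z=0$ to equal, up to a simple constant and rescaling, $\sum_{n=1}^\infty \mathfrak{R}(2\pi n, w)$.

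The substantive part is the right-hand side: I would compute $\lim_{z\to 0}\tfrac12\big\{\zeta(1+2z,iw)+\zeta(1+2z,-iw) - \cos(\pi z)/(z w^{2z})\big\}$. Each of $\zeta(1+2z,iw)$ and $\zeta(1+2z,-iw)$ has a simple pole at $z=0$ coming from the pole of $\zeta(s,a)$ at $s=1$, with Laurent expansion $\zeta(1+2z,a) = \tfrac{1}{2z} - \psi(a) + O(z)$ (using $\zeta(s,a) = \tfrac{1}{s-1} - \psi(a) + O(s-1)$). The term $\cos(\pi z)/(z w^{2z})$ also has a simple pole: $\cos(\pi z) = 1 + O(z^2)$ and $w^{-2z} = 1 - 2z\log w + O(z^2)$, so $\cos(\pi z)/(z w^{2z}) = \tfrac{1}{z} - 2\log w + O(z)$. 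Adding, the $\tfrac{1}{2z}+\tfrac{1}{2z}-\tfrac{1}{z}$ poles cancel exactly, and the finite part is $\tfrac12\big\{-\psi(iw)-\psi(-iw)+2\log w\big\} = \log w - \tfrac12\big(\psi(iw)+\psi(-iw)\big)$. I then need to reconcile this with the right-hand side of \eqref{dgkmresult}, which reads $\tfrac12\{\log(u/(2\pi)) - \tfrac12(\psi(iu/(2\pi))+\psi(-iu/(2\pi)))\}$; writing $w = u/(2\pi)$ and $v = ut/(2\pi)$ (or equivalently rescaling the integral in \eqref{genraabe}), the factor-of-two discrepancy and the $2\pi$ should be absorbed by the change of variables relating $\mathfrak{R}_0(2\pi n, w)$ to $\int_0^\infty t\cos(t)/(t^2+n^2u^2)\,dt$, so I would carry out that substitution carefully to match normalizations.

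The main obstacle is justifying the interchange of the limit $z\to 0$ with the infinite sum (and, implicitly, with the integral) on the left-hand side, since the excerpt already warns that naive interchange of sum and integral in this setting produces divergence. I would handle this by working with the version of \eqref{nagoyageneqn} in which the sum over $n$ has already been performed — i.e.\ the closed form $\tfrac12\{\zeta(1+2z,iw)+\zeta(1+2z,-iw) - \cos(\pi z)/(zw^{2z})\}$ — which is manifestly meromorphic in $z$ with only a removable singularity at $z=0$; likewise the series $\sum_{n\ge1}\mathfrak{R}(2\pi n,w)$ on the left of \eqref{dgkmresult} converges (it is the object identified in \eqref{dgkmresult}), and one checks that the termwise limit $\lim_{z\to 0}\int_0^\infty\big((v-iw)^{-1-2z}+(v+iw)^{-1-2z}\big)\cos(2\pi nv)\,dv = 2\mathfrak{R}(2\pi n,w)$ holds with a bound uniform enough (e.g.\ via integration by parts in $v$ to gain decay, giving $O(1/n^2)$ estimates locally uniformly in $z$ near $0$) to invoke dominated convergence for the sum over $n$. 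With both sides shown to be continuous at $z=0$ and equal for $\mathrm{Re}(z)>0$, they agree at $z=0$, and after the rescaling $w=u/(2\pi)$ this is precisely \eqref{dgkmresult}; hence Corollary \ref{cornagoya} follows. $\Box$
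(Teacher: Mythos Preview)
Your proposal is correct and follows essentially the same route as the paper: take $z\to 0$ in \eqref{nagoyageneqn}, expand $\zeta(1+2z,\pm iw)=\tfrac{1}{2z}-\psi(\pm iw)+O(z)$ and $\cos(\pi z)/(zw^{2z})=\tfrac{1}{z}-2\log w+O(z)$ so that the poles cancel, and then substitute $2\pi nv=t$ and $w=u/(2\pi)$ to recover \eqref{dgkmresult}. If anything, you are more scrupulous than the paper, which simply lets $z\to 0$ on both sides without explicitly justifying the interchange of the limit with the sum over $n$; your integration-by-parts remark supplying an $O(1/n^2)$ bound uniform near $z=0$ is a reasonable way to fill that in.
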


Our next result gives an evaluation of a double integral which is imperative to prove Theorem \ref{nagoyagen}. 
\begin{theorem}\label{intiszero}
Let $\mathrm{Re}(w)>0$ and  $\mathrm{Re}(z)>0$. Then
\begin{align}\label{intiszeroeqn}
\int_0^\infty\int_0^\infty\left(\frac{1}{(t-iw)^{2z+1}}+\frac{1}{(t+iw)^{2z+1}}\right)\cos(2\pi vt)\ dtdv=-\frac{1}{2w^{2z+1}}\sin(\pi z).
\end{align}
Equivalently, in the notation of \eqref{genraabe},
\begin{align}
\int_0^\infty\mathfrak{R}_z(2\pi v,w)\ dv=-\frac{1}{4w^{2z+1}}\Gamma(2z+1)\sin(\pi z).\nonumber
\end{align}
\end{theorem}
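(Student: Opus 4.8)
The plan is to read the left-hand side as the iterated integral $\int_0^\infty I(2\pi v)\,dv$, where $I(y):=\int_0^\infty g(t)\cos(yt)\,dt$ and $g(t):=(t-iw)^{-2z-1}+(t+iw)^{-2z-1}$, and to evaluate it by an Abel-type regularisation that concentrates all the mass at $t=0$. The underlying mechanism is that, after inserting a convergence factor $e^{-\epsilon v}$ and letting $\epsilon\to0^+$, the Laplace kernel $\int_0^\infty e^{-\epsilon v}\cos(2\pi v t)\,dv=\epsilon/(\epsilon^2+4\pi^2t^2)$ becomes an approximate identity on $(0,\infty)$ of total mass $\tfrac14$, so that only $g(0)$ survives; and $g(0)=-2w^{-2z-1}\sin(\pi z)$, giving the value $\tfrac14 g(0)$, which is exactly what is claimed.

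First I would record the elementary facts that make the argument run. Since $\mathrm{Re}(w)>0$, for real $t\ge0$ we have $|t\pm iw|\ge\mathrm{Re}(w)>0$ and $\arg(t\pm iw)\in(-\pi,\pi)$, so $g$ is smooth and bounded on $[0,\infty)$; moreover $g(t)=2t^{-2z-1}\bigl(1+O(t^{-1})\bigr)$ as $t\to\infty$ with $\mathrm{Re}(2z+1)>1$, so $g,g',g''\in L^1(0,\infty)$ and $g,g'\to0$ at infinity. In particular $I(y)$ is an absolutely convergent integral for every $y\ge0$ (so the iterated integral in the statement is unambiguous), and $I$ is continuous and bounded in $y$. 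The key quantitative input is that two integrations by parts, with all boundary terms vanishing because $g,g'$ decay and $\sin 0=0$, give
\begin{equation*}
I(y)=-\frac{g'(0)}{y^{2}}-\frac{1}{y^{2}}\int_0^\infty g''(t)\cos(yt)\,dt\qquad(y>0),
\end{equation*}
whence $|I(y)|\le\bigl(|g'(0)|+\int_0^\infty|g''|\bigr)y^{-2}$ for all $y>0$; together with the boundedness of $I$ near $0$ this yields $\int_0^\infty|I(2\pi v)|\,dv<\infty$, so the iterated integral is genuinely (absolutely) convergent in $v$.

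With that in hand, dominated convergence gives $\int_0^\infty I(2\pi v)\,dv=\lim_{\epsilon\to0^+}\int_0^\infty e^{-\epsilon v}I(2\pi v)\,dv$; for each fixed $\epsilon>0$ the bound $\int_0^\infty\!\!\int_0^\infty e^{-\epsilon v}|g(t)|\,dt\,dv=\epsilon^{-1}\int_0^\infty|g|<\infty$ lets me apply Fubini and obtain $\int_0^\infty e^{-\epsilon v}I(2\pi v)\,dv=\int_0^\infty g(t)\,\epsilon\,(\epsilon^2+4\pi^2t^2)^{-1}\,dt$. Letting $\epsilon\to0^+$, the kernel $K_\epsilon(t):=\epsilon/(\epsilon^2+4\pi^2t^2)$ is positive, has $\int_0^\infty K_\epsilon=\tfrac14$ for every $\epsilon$, and concentrates at $t=0$, so against the bounded function $g$ continuous at $0$ a standard Poisson-kernel limit gives the value $\tfrac14 g(0)$. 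Finally, because $\mathrm{Re}(w)>0$ one has $\arg(\pm iw)=\arg(w)\pm\tfrac\pi2\in(-\pi,\pi)$, hence $(\pm iw)^{2z+1}=e^{\pm i\pi(2z+1)/2}w^{2z+1}$ for the principal branch and $g(0)=2w^{-2z-1}\cos\!\bigl(\pi z+\tfrac\pi2\bigr)=-2w^{-2z-1}\sin(\pi z)$; therefore the double integral equals $-\sin(\pi z)/(2w^{2z+1})$, and the reformulation in terms of $\mathfrak{R}_z$ is immediate from the definition \eqref{genraabe}.

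The step I would be most careful about is precisely the $O(y^{-2})$ decay of $I(y)$: a single integration by parts only gives $O(y^{-1})$, which is not integrable at infinity, so without the second integration by parts — and the (harmless, finite) appearance of $g'(0)$ — it would not even be clear that the iterated integral in the statement converges, let alone that one may interchange it with a regulariser. Everything else is routine: a Fubini/Abel interchange for $\epsilon>0$ and a Poisson-kernel limit as $\epsilon\to0^+$. It is worth noting that no analytic continuation is needed here, since every step is valid directly for $\mathrm{Re}(z)>0$ and $\mathrm{Re}(w)>0$.
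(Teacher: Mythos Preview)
Your argument is correct, and it is genuinely different from (and considerably shorter than) the paper's own proof.

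The paper regularises with a Gaussian factor $e^{-v^2/N}$, swaps the integrals, and then the inner $v$-integral is itself a Gaussian. This forces the authors to represent the remaining $t$-integral via a ${}_2F_1$ identity, evaluate it through a Prudnikov formula in terms of ${}_2F_2$ and ${}_1F_1$, and finally let $N\to\infty$ using rather delicate asymptotic expansions of these hypergeometric functions in which exponentially growing terms must cancel; the argument even has to be run separately on two subregions of $\arg(w)$. By contrast, your Abel regulariser $e^{-\epsilon v}$ produces the Poisson-type kernel $\epsilon/(\epsilon^2+4\pi^2t^2)$, whose half-line mass is exactly $\tfrac14$, so the $\epsilon\to0^+$ limit is a one-line approximate-identity argument yielding $\tfrac14 g(0)$. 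Your two integrations by parts give the $O(y^{-2})$ decay that both justifies the absolute convergence of the $v$-integral and licenses dominated convergence in the regularisation step; the paper obtains this same decay from its separate asymptotic Lemma~\ref{genraabeasyinfty}. The trade-off is that the paper's more elaborate machinery (Gaussian regularisation, hypergeometric tables, large-argument asymptotics) is the kind that can in principle be pushed to more complicated kernels, whereas your approach exploits precisely the structure here---that the answer is a boundary value $g(0)$---and does so with nothing beyond Fubini and a standard Poisson-kernel limit. For the statement as written, your route is both simpler and valid uniformly for $\mathrm{Re}(w)>0$, $\mathrm{Re}(z)>0$ without any case splitting.
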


It is effortless to see that for $z\in\mathbb{N}\cup\{0\}$, the above integral evaluates to zero. The particular case $z=0$ is already obtained in \cite[Lemma 3.4]{dgkm}.

We now provide an new equivalent representation for $\mathfrak{R}_m(1,w)$, where $m\in\mathbb{N}\cup\{0\}$. This representation appears in the transformation of $\sum_{n=1}^{\infty}\sigma_{2m}(n)e^{-ny}$ given in \eqref{dkka=2m eqn}. 
\begin{theorem}\label{minusonelemma}
Let $\mathrm{Shi}(z)$, $\mathrm{Chi}(z)$ and $\mathfrak{R}_z(y,w)$ be defined in \eqref{shichi} and \eqref{genraabe} respectively. Let $m\in\mathbb{N}\cup\{0\}$ and $\mathrm{Re}(w)>0$. Then
\begin{align}\label{minusone}
\mathfrak{R}_m(1,w)&=\frac{(2m)!}{2}\int_0^\infty\left(\frac{1}{(t-iw)^{2m+1}}+\frac{1}{(t+iw)^{2m+1}}\right)\cos(t)\ dt\nonumber\\
&=(-1)^m\left(\sinh(w)\mathrm{Shi}(w)-\cosh(w)\mathrm{Chi}(w)+\sum_{j=1}^m(2j-1)!w^{-2j}\right).
\end{align}
\end{theorem}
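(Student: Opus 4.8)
The plan is to establish \eqref{minusone} first for real $w>0$ and then extend it to $\mathrm{Re}(w)>0$ by analytic continuation, both sides being analytic there: on the left $\pm iw\notin\mathbb{R}$, so the integrand has no real pole, and for $m\ge 1$ the integral is absolutely convergent. The proof then splits into a base case $m=0$ and an induction that lowers $m$ to $m-1$. For $m=0$, since $\Gamma(1)=1$, the assertion reduces to
\begin{equation*}
\mathfrak{R}_0(1,w)=\int_0^\infty\frac{t\cos t}{t^2+w^2}\,dt=\sinh(w)\,\mathrm{Shi}(w)-\cosh(w)\,\mathrm{Chi}(w),
\end{equation*}
which is a tabulated evaluation of Raabe's cosine transform, namely $\int_0^\infty t\cos t\,(t^2+w^2)^{-1}\,dt=-\tfrac12\big(e^{w}\mathrm{Ei}(-w)+e^{-w}\mathrm{Ei}(w)\big)$; this matches the right-hand side above once one inserts the series identities $\mathrm{Shi}(w)=\tfrac12(\mathrm{Ei}(w)-\mathrm{Ei}(-w))$ and $\mathrm{Chi}(w)=\tfrac12(\mathrm{Ei}(w)+\mathrm{Ei}(-w))$. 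A self-contained alternative for this step: both $g(w):=\sinh(w)\mathrm{Shi}(w)-\cosh(w)\mathrm{Chi}(w)$ and $F(w):=\int_0^\infty t\cos t\,(t^2+w^2)^{-1}\,dt$ satisfy the ODE $y''-y=w^{-2}$ — for $g$ by a short computation using $\mathrm{Shi}'(w)=\sinh(w)/w$, $\mathrm{Chi}'(w)=\cosh(w)/w$, and for $F$ by two integrations by parts (which also regularize the conditionally convergent integral) — while $F(w)-g(w)\to 0$ both as $w\to 0^+$ (each behaves like $-\log w-\gamma$) and as $w\to\infty$ (each tends to $0$); since $F-g$ solves the homogeneous equation it equals $Ae^{w}+Be^{-w}$, and those two limits force $A=B=0$, hence $F\equiv g$.

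For the inductive step I set $I_m:=\int_0^\infty\big((t-iw)^{-(2m+1)}+(t+iw)^{-(2m+1)}\big)\cos t\,dt$, so that $\mathfrak{R}_m(1,w)=\tfrac{(2m)!}{2}I_m$ by \eqref{genraabe}. For $m\ge 1$ I would integrate $\int_0^\infty\cos t\,(t-iw)^{-(2m+1)}\,dt$ by parts, writing $(t-iw)^{-(2m+1)}=-\tfrac{1}{2m}\tfrac{d}{dt}(t-iw)^{-2m}$: the boundary contribution at infinity vanishes since $2m\ge 2$, the boundary contribution at $0$ is $\tfrac1{2m}(-iw)^{-2m}=\tfrac{(-1)^m}{2m\,w^{2m}}$ (using $(\mp iw)^{2m}=(-1)^mw^{2m}$ for $w>0$), and there remains $-\tfrac1{2m}\int_0^\infty\sin t\,(t-iw)^{-2m}\,dt$. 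A second integration by parts turns this into $-\tfrac1{2m(2m-1)}\int_0^\infty\cos t\,(t-iw)^{-(2m-1)}\,dt$, both boundary terms now vanishing ($\sin 0=0$ at the bottom, $2m-1\ge 1$ at the top). Doing the same on the $(t+iw)$ piece and adding gives $I_m=\tfrac{(-1)^m}{m\,w^{2m}}-\tfrac{1}{2m(2m-1)}I_{m-1}$, and multiplying by $\tfrac{(2m)!}{2}$ (with $\tfrac{(2m)!}{2m}=(2m-1)!$ and $\tfrac{(2m)!}{2m(2m-1)}=(2m-2)!$) yields the recurrence
\begin{equation*}
\mathfrak{R}_m(1,w)=\frac{(-1)^m(2m-1)!}{w^{2m}}-\mathfrak{R}_{m-1}(1,w)\qquad(m\ge 1).
\end{equation*}

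Finally I would close the induction: substituting the formula \eqref{minusone} for $m-1$ into this recurrence and factoring out $(-1)^m$ returns \eqref{minusone} for $m$, the new term $(-1)^m(2m-1)!w^{-2m}$ being exactly the $j=m$ summand of $\sum_{j=1}^m(2j-1)!w^{-2j}$. The main obstacle is the base case $m=0$: one must handle with care the conditional convergence of $\mathfrak{R}_0(1,w)$, the branch of $\mathrm{Chi}$, and — in a direct contour-shift proof — the pole at $t=iw$ lying in the upper half-plane. This is why I would rely either on the classical (tabulated) Raabe evaluation or on the ODE argument above. By contrast, the integrations by parts for $m\ge1$ and the induction itself are entirely routine.
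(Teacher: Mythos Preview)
Your proof is correct and follows essentially the same route as the paper: the base case $m=0$ is the known Raabe evaluation (the paper simply cites \cite[Lemma 9.1]{dkk1}, while you also offer a self-contained ODE alternative), and the passage to general $m$ is via repeated integration by parts on the partial-fraction decomposition of $t/(t^2+w^2)$. The only cosmetic difference is that the paper phrases this as ``integrate by parts $2m$ times'' in one shot, whereas you package the same computation as a two-step recurrence $\mathfrak{R}_m(1,w)=(-1)^m(2m-1)!w^{-2m}-\mathfrak{R}_{m-1}(1,w)$ and close it by induction.
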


The special case $m=0$ of this result was derived in \cite[Lemma 9.1]{dkk1}.
 
As mentioned earlier, we provide a new proof of \eqref{dkka=2m eqn} in this paper. It is done by employing the Lipschitz summation formula and Theorems \ref{nagoyagen} and \ref{minusonelemma}. Deriving it this way is simpler than obtaining it as a special case of \eqref{maineqn}. The latter was done in \cite[Section 9]{dkk1}.

%

This paper is organised as follows. In Section \ref{grct}, the proofs of  Theorems \ref{nagoyagen}, \ref{intiszero} and \ref{minusonelemma} are given.  Sections  \ref{section ram thm} and \ref{appl} are devoted to proving Theorem \ref{ram with a} and \eqref{dkka=2m eqn} respectively.


\section{The generalized Raabe cosine transform $\mathfrak{R}_z(y,w)$}\label{grct}

This section is devoted to obtaining the results associated with $\mathfrak{R}_z(y,w)$ and which are crucial to proving \eqref{dkka=2m eqn}. The first result below gives the asymptotic expansion of $\mathfrak{R}_z(y,w)$ as $y\to\infty$.
\begin{lemma}\label{genraabeasyinfty}
Let $\mathfrak{R}_z(y,w)$ be defined in \eqref{genraabe}. Let $\mathrm{Re}(w)>0$ and $\mathrm{Re}(z)>0$. Then as $y\to\infty$,
\begin{align}
\mathfrak{R}_z(y,w)\sim-\frac{\cos(\pi z)}{w^{2z}}\sum_{n=1}^\infty \frac{\Gamma(2z+2n)}{(yw)^{2n}}.\nonumber
\end{align}
\end{lemma}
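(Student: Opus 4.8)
The plan is to obtain the asymptotic expansion of $\mathfrak{R}_z(y,w)$ as $y\to\infty$ via repeated integration by parts in the defining integral \eqref{genraabe}, exploiting the oscillation of $\cos(yt)$. Write
\begin{align*}
\mathfrak{R}_z(y,w)=\frac{\Gamma(2z+1)}{2}\int_0^\infty g(t)\cos(yt)\,dt,\qquad g(t):=\frac{1}{(t-iw)^{2z+1}}+\frac{1}{(t+iw)^{2z+1}}.
\end{align*}
Since $\mathrm{Re}(w)>0$, the function $g$ and all its derivatives are smooth on $[0,\infty)$ and decay like $t^{-2z-1}$ (and faster after differentiation), so all the integrals that arise converge absolutely. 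The even derivatives of $\cos(yt)$ in $t$ reproduce $\cos(yt)$ up to a factor $(-1)^n y^{2n}$, while the odd derivatives produce $\sin(yt)$; integrating by parts an even number of times $2n$ and discarding the fully oscillatory remainder will generate the claimed series. The key boundary evaluations are at $t=0$: one needs $g^{(2n-1)}(0)$, i.e. the odd-order derivatives of $g$ at the origin (the even-order boundary terms come paired with $\sin(yt)$ and vanish at $t=0$).

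Concretely, $g^{(k)}(t)=(-1)^k(2z+1)_k\big((t-iw)^{-2z-1-k}+(t+iw)^{-2z-1-k}\big)$, where $(a)_k$ is the Pochhammer symbol, so
\begin{align*}
g^{(2n-1)}(0)=-(2z+1)_{2n-1}\Big((-iw)^{-2z-2n}+(iw)^{-2z-2n}\Big).
\end{align*}
Using $(-iw)^{-2z-2n}+(iw)^{-2z-2n}=w^{-2z-2n}\big(e^{\pi i(z+n)}+e^{-\pi i(z+n)}\big)=2(-1)^n w^{-2z-2n}\cos(\pi z)$, together with $(2z+1)_{2n-1}=\Gamma(2z+2n)/\Gamma(2z+1)$, each step contributes
\begin{align*}
-\frac{\Gamma(2z+1)}{2}\cdot\frac{(-1)^{n-1}}{y^{2n}}\,g^{(2n-1)}(0)=-\frac{\cos(\pi z)}{w^{2z}}\cdot\frac{\Gamma(2z+2n)}{(yw)^{2n}},
\end{align*}
after the signs from differentiating $\cos(yt)$ and from the integration-by-parts formula are tracked carefully. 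Summing over $n$ gives the stated series. The remainder after $N$ steps is $\tfrac{\Gamma(2z+1)}{2}(-1)^N y^{-2N}\int_0^\infty g^{(2N)}(t)\cos(yt)\,dt$, which is $O(y^{-2N})$ uniformly (indeed $o(y^{-2N})$ by Riemann–Lebesgue, or $O(y^{-2N-1})$ after one more integration by parts), establishing that the series is a genuine asymptotic expansion.

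The main obstacle is bookkeeping rather than conceptual: one must justify that every integration by parts is legitimate (no boundary contribution at $t=\infty$, which follows from $\mathrm{Re}(w)>0$ forcing decay of all $g^{(k)}$), correctly pin down the sign in front of each term, and verify that the branch of $(t\pm iw)^{-2z-1}$ used makes the evaluation $(\mp iw)^{-2z}=w^{-2z}e^{\mp\pi i z}$ the consistent one (this is the same branch convention already implicit in \eqref{genraabe} and in the $\cos(\pi z)/(zw^{2z})$ term of Theorem \ref{nagoyagen}, so consistency is inherited). Once the sign of the first nonzero term is checked against, say, the known case $z=0$ or against Theorem \ref{intiszero}, the pattern is forced and the rest is routine.
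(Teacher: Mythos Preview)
Your argument is correct. The repeated integration by parts you outline is exactly the mechanism behind the endpoint asymptotics of Fourier integrals, and your computation of $g^{(2n-1)}(0)$ and the resulting coefficient $-\cos(\pi z)\,\Gamma(2z+2n)/(w^{2z}(yw)^{2n})$ checks out line by line. The remainder estimate via one further integration by parts (or Riemann--Lebesgue) is legitimate because $g^{(k)}$ is smooth on $[0,\infty)$ and decays like $t^{-2\mathrm{Re}(z)-1-k}$, so all integrals are absolutely convergent.

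The paper proceeds differently in packaging: rather than integrating by parts directly, it expands $h(t)=g(t)$ into its Taylor series about $t=0$ and then invokes a Watson-type lemma for Fourier integrals (citing Olver and Dai--Naylor) which reads off the asymptotic expansion of $\int_0^\infty e^{ist}h(t)\,dt$ from the Taylor coefficients. Your approach and the paper's are the same argument at heart---the cited lemma is, in the regular case $\lambda=1$ relevant here, precisely repeated integration by parts---but the emphases differ. The paper's route is shorter on the page since the sign bookkeeping is absorbed into the quoted formula; your route is more self-contained and makes the hypotheses (smoothness and decay of $g$ and its derivatives) explicit, so no external reference is needed. Either is perfectly adequate for this lemma.
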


\begin{proof}
We use the analogue of Watson's lemma for Laplace transform in the setting of Fourier transforms \cite{olver1974}, \cite[Equations (1.3), (1.4)]{dainaylor}. It states that if the form of $h(t)$ near $t=0$ is given as a series of algebraic powers, that is,
\begin{equation}\label{algh}
h(t)\sim\sum_{n=0}^{\infty}b_nt^{n+\lambda-1}
\end{equation} 
as $t\to 0^{+}$, then under certain restrictions on $h$ (see \cite{olver1974}, \cite[Section 2]{dainaylor} for the same),
\begin{align}\label{olvthm}
\int_{0}^{\infty}e^{ist}h(t)\, \mathrm{d}t\sim \sum_{n=0}^{\infty}b_ne^{i(n+\lambda)\pi/2}\Gamma(n+\lambda)s^{-n-\lambda}
\end{align}
as $s\to\infty$.

Let $$h(t):=\displaystyle\frac{1}{(t-iw)^{2z+1}}+\frac{1}{(t+iw)^{2z+1}}.$$ 
Then, near $t=0$, it is easy to see that
\begin{align}
h(t)&=
(-iw)^{-(2z+1)}\sum_{n=0}^\infty\frac{(2z+1)_n}{n!}\left(\frac{t}{iw}\right)^n+(iw)^{-(2z+1)}\sum_{n=0}^\infty\frac{(2z+1)_n}{n!}\left(-\frac{t}{iw}\right)^n\nonumber\\
&=2w^{-(2z+1)}\sum_{n=0}^\infty\frac{(2z+1)_n}{n!w^n}\sin\left(\frac{\pi n}{2}-\pi z\right)t^n.\nonumber
\end{align}
Therefore, it is clear that our function $h(t)$ satisfies \eqref{algh} with $\lambda=1$ and 
\begin{align}\label{bn}
b(n)=\frac{2w^{-(2z+1)}(2z+1)_n}{n!w^n}\sin\left(\frac{\pi n}{2}-\pi z\right).
\end{align} 
From \eqref{olvthm} and \eqref{bn},  as $y\to\infty$,
\begin{align}\label{plusv}
\int_0^\infty \left(\frac{1}{(t-iw)^{2z+1}}+\frac{1}{(t+iw)^{2z+1}}\right) e^{ iyt}\ dt\sim\sum_{n=0}^\infty b_ne^{i(n+1)\frac{\pi}{2}}\Gamma(n+1)y^{-n-1},
\end{align}
where $b(n)$ is given in \eqref{bn}. Similarly, as $y\to\infty$,
\begin{align}\label{minusv}
\int_0^\infty \left(\frac{1}{(t-iw)^{2z+1}}+\frac{1}{(t+iw)^{2z+1}}\right) e^{-iyt}\ dt\sim\sum_{n=0}^\infty b_ne^{i(n+1)\frac{\pi}{2}}\Gamma(n+1)(-y)^{-n-1}.
\end{align}
From \eqref{plusv} and \eqref{minusv}, we see that as $y\to\infty$,
\begin{align}\label{fullk}
&\int_0^\infty \left(\frac{1}{(t-iw)^{2z+1}}+\frac{1}{(t+iw)^{2z+1}}\right)\cos(yt)\ dt\nonumber\\
&\sim w^{-(2z+1)}\sum_{n=0}^\infty \frac{(2z+1)_n}{w^ny^{n+1}}e^{i(n+1)\frac{\pi}{2}}\sin\left(\frac{\pi n}{2}-\pi z\right)(1+(-1)^{-n-1})\nonumber\\
&=2w^{-(2z+1)}\sum_{n=1}^\infty\frac{(2z+1)_{(2n-1)}}{w^{2n-1}y^{2n}}e^{n\pi i}\sin\left(\frac{\pi (2n-1)}{2}-\pi z\right)\nonumber\\
&=-\frac{2w^{-2z}\cos(\pi z)}{\Gamma(2z+1)}\sum_{n=1}^\infty\frac{\Gamma(2z+2n)}{(yw)^{2n}}.
\end{align}
Lemma \ref{genraabeasyinfty}  follows upon multiplying both sides of \eqref{fullk} by $\frac{1}{2}\Gamma(2z+1)$ and then using the definition of $\mathfrak{R}_z(y,w)$ from \eqref{genraabe}.
\end{proof}
\begin{remark}
The special case $z=0$ of Lemma \ref{genraabeasyinfty} was obtained in \cite[Lemma 3.3]{dgkm}.
\end{remark}
Our next task is to evaluate the double integral in \eqref{intiszeroeqn}.

\begin{proof}[Theorem \textup{\ref{intiszero}}][]
Note that double integral in \eqref{intiszeroeqn} is not absolutely convergent which means we cannot interchange the order of integration. Securing convergence of the integral over $v$ near $v=0$ is straightforward. Along with this,  Lemma \ref{genraabeasyinfty} implies that the double integral in \eqref{intiszeroeqn} is convergent. 

We first evaluate a more general integral by introducing the exponential factor $e^{-\frac{v^2}{N}}$ inside the integrand and then take limit $N\to\infty$. Let $N$ be a positive integer and consider the integral
\begin{align}\label{IN}
I(w,z,N):&=\int_0^\infty\int_0^\infty e^{-\frac{v^2}{N}}\left(\frac{1}{(t-iw)^{2z+1}}+\frac{1}{(t+iw)^{2z+1}}\right)\cos(2\pi vt)\ dtdv \quad(\mathrm{Re}(w)>0,\ \mathrm{Re}(z)>0).
\end{align}
By invoking Fubini's theorem we can interchange the order of the summation and integration in the above equation to see that
\begin{align}\label{bid}
I(w,z,N)&=\int_0^\infty\left(\frac{1}{(t-iw)^{2z+1}}+\frac{1}{(t+iw)^{2z+1}}\right)\int_0^\infty e^{-\frac{v^2}{N}}\cos(2\pi vt)\ dvdt\nonumber\\
&=\frac{\sqrt{\pi N}}{2}\int_0^\infty e^{-N\pi^2t^2}\left(\frac{1}{(t-iw)^{2z+1}}+\frac{1}{(t+iw)^{2z+1}}\right) dt,
\end{align}
where we used the fact that $e^{-v^2/N}$ is self-reciprocal (up to some factor) with respect to the cosine kernel (See \cite[p.~488, Formula 3.896.4]{gr}). Next invoke the identity \cite[p.~88, Section 2.5.5]{erd}
\begin{align}
(1-\sqrt{\xi})^{-2s}+(1+\sqrt{\xi})^{-2s}=2\ {}_2F_1\left(s,s+\frac{1}{2};\frac{1}{2};\xi\right),\nonumber
\end{align}
with $\xi=-w^2/t^2$ and $s=z+1/2$ in \eqref{bid} to deduce that
\begin{align}\label{before prud}
I(w,z,N)&=\sqrt{\pi N}\int_0^\infty e^{-N\pi^2t^2}t^{-2z-1}{}_2F_1\left(z+\frac{1}{2},z+1;\frac{1}{2};-\frac{w^2}{t^2}\right)dt\nonumber\\
&=\frac{\sqrt{\pi N}}{2}\int_0^\infty e^{-N\pi^2/x}x^{z-1}{}_2F_1\left(z+\frac{1}{2},z+1;\frac{1}{2};-w^2x\right)dx,
\end{align}
where we made the change of variable $t=1/\sqrt{x}$. From \cite[p~319, Formula 2.21.2.6]{prud}, for $\mathrm{Re}(p)>0, \mathrm{Re}(a-\alpha)>0,\ \mathrm{Re}(b-\alpha)>0$ and $|\arg (\omega)|<\pi$, we have
\begin{align*}
\int_0^\infty x^{\alpha-1}e^{-p/x}{}_2F_1(a,b;c;-\omega x)dx&=\omega^{-\alpha}\frac{\Gamma(c)\Gamma(\alpha)\Gamma(a-\alpha)\Gamma(b-\alpha)}{\Gamma(a)\Gamma(b)\Gamma(c-\alpha)}{}_2F_2(a-\alpha,b-\alpha;1-\alpha,c-\alpha;\omega p)\nonumber\\
&\quad+p^\alpha\Gamma(-\alpha){}_2F_2(a,b;c,\alpha+1;\omega p).
\end{align*}
Let $p=N\pi^2,\ a=z+1/2,\ b=z+1,\ c=1/2,\ \alpha=z$ and $\omega=w^2$ in the above integral evaluation, use the reflection formula for the gamma function $\Gamma(1/2+s)\Gamma(1/2-s)=\pi/\cos(\pi s)$  and substitute the resultant in \eqref{before prud} so that for $|\arg (w)|<\pi/2$,
\begin{align}\label{eval in N}
I(w,z,N)&=\frac{\sqrt{\pi N}}{2}\left\{\frac{\cos(\pi z)}{zw^{2z}}{}_2F_2\left(\frac{1}{2},1;1-z,\frac{1}{2}-z;N\pi^2w^2\right)+(N\pi^2)^z\Gamma(-z){}_1F_1\left(z+\frac{1}{2};\frac{1}{2};N\pi^2w^2\right)\right\}.
\end{align}
We now wish to take limit $N\to\infty$ on both sides of the above equation. To that end, we need to find the behavior of the functions on the right-hand side as $N\to\infty$. The following asymptotic is given by Kim \cite{kim}: as $x\to\infty$ in $-\frac{3\pi}{2}<\mathrm{\arg}(x)<\frac{\pi}{2}$, for $\alpha\neq\mathbb{Z}\cup\{0\}$,
\begin{align*}
{}_2F_1(1,\alpha;\rho_1,\rho_2;x)\sim\frac{\Gamma(\rho_1)\Gamma(\rho_2)}{\Gamma(\alpha)}\left(K_{22}(x)+L_{22}(-x)\right),
\end{align*}
where, with $\nu=1+\alpha-\rho_1-\rho_2$,
\begin{align*}
K_{22}(x)=x^\nu e^x{}_2F_0\left(\rho_1-\alpha,\rho_2-\alpha;-;\frac{1}{x}\right),
\end{align*}
and 
\begin{align*}
L_{22}(x)&=x^{-1}\frac{\Gamma(\alpha-1)}{\Gamma(\rho_1-1)\Gamma(\rho_2-1)}{}_3F_1\left(1,2-\rho_1,2-\rho_2;2-\alpha;\frac{1}{x}\right)\nonumber\\
&\quad+x^{-\alpha}\frac{\Gamma(\alpha)\Gamma(1-\alpha)}{\Gamma(\rho_1-\alpha)\Gamma(\rho_1-\alpha)}{}_2F_0\left(1+\alpha-\rho_1,1+\alpha-\rho_2;-;\frac{1}{x}\right).
\end{align*}
We let $\alpha=1/2,\ \rho_1=1-z,\ \rho_2=1/2-z$ and  $x=N\pi^2w^2$ in the above expression to get, for $-\frac{3\pi}{4}<\mathrm{\arg}(w)<\frac{\pi}{4}$, 
\begin{align}\label{asy 2f2}
{}_2F_2\left(\frac{1}{2},1;1-z,\frac{1}{2}-z;N\pi^2w^2\right)&\sim\frac{\Gamma(1/2-z)\Gamma(1-z)}{\sqrt{\pi}}\left\{(N\pi^2w^2)^{2z}e^{N\pi^2w^2}{}_2F_0\left(\frac{1}{2}-z,-z;-;\frac{1}{N\pi^2w^2}\right)\right.\nonumber\\
&\left.\quad+\frac{2}{N\pi^{3/2}w^2\Gamma(-1/2-z)\Gamma(-z)}{}_3F_1\left(1,1+z,\frac{3}{2}+z;\frac{3}{2};-\frac{1}{N\pi^2w^2}\right)\right.\nonumber\\
&\left.\quad+\frac{\pi(-N\pi^2w^2)^{-1/2}}{\Gamma(1/2-z)\Gamma(-z)}{}_2F_0\left(\frac{1}{2}+z,1+z;-;-\frac{1}{N\pi^2w^2}\right)\right\}
\end{align}
as $N\to\infty$. Also, from \cite[p.~189, Exercise 7.7]{temme},
\begin{align*}
{}_1F_1(a;c;x)\sim\frac{e^xx^{a-c}\Gamma(c)}{\Gamma(a)}\sum_{n=0}^\infty\frac{(c-a)_n(1-a)_n}{n!}x^{-n}+\frac{e^{-\pi ia}x^{-a}}{\Gamma(c-a)}\sum_{n=0}^\infty\frac{(a)_n(1+a-c)_n}{n!}(-x)^{-n}, \quad x\to\infty,
\end{align*}
where $-\frac{3\pi}{2}<\mathrm{arg}(x)<\frac{\pi}{2}$. Upon letting $a=z+1/2,\ c=1/2$ and $x=N\pi^2w^2$ in the above formula and using the series definition of ${}_2F_0$, for $-\frac{3\pi}{4}<\mathrm{arg}(w)<\frac{\pi}{4}$, we see that
\begin{align}\label{asy 1f1}
{}_1F_1\left(z+\frac{1}{2};\frac{1}{2};N\pi^2w^2\right)&\sim e^{N\pi^2w^2}\frac{\sqrt{\pi}(N\pi^2w^2)^z}{\Gamma(z+1/2)}{}_2F_0\left(-z,\frac{1}{2}-z;-;\frac{1}{N\pi^2w^2}\right)\nonumber\\
&\quad+e^{-\pi i(z+1/2)}\frac{\sqrt{\pi}(N\pi^2w^2)^{-(z+1/2)}}{\Gamma(-z)}{}_2F_0\left(z+\frac{1}{2},1+z;-;-\frac{1}{N\pi^2w^2}\right)
\end{align}
as $N\to\infty$. Substitute  \eqref{asy 2f2} and \eqref{asy 1f1} in \eqref{eval in N} and observe that the terms involving ${}_2F_0\left(-z,\frac{1}{2}-z;-;\frac{1}{N\pi^2w^2}\right)$ cancel each other out. Also note that ${}_pF_q(a_1,\cdots, a_p;b_1,\cdots,b_q;1/N)=1+O(1/N)$, as $N\to\infty$. Hence, for $-\frac{\pi}{2}<\arg (w)<\frac{\pi}{4}$, as $N\to\infty$, 
\begin{align}
I(w,z,N)&=\frac{\sqrt{\pi}}{2}\left\{\frac{2^{2z}\Gamma(1-2z)\cos(\pi z)}{zw^{2z}}\left[\frac{1}{\sqrt{N}}\frac{2^{-2z-1}}{\pi^2w^2\Gamma(-1-2z)}\left(1+O\left(\frac{1}{N}\right)\right)\right.\right.\nonumber\\
&\left.\left.\quad+\frac{i2^{-2z-1}}{\sqrt{\pi}w\Gamma(-2z)}\left(1+O\left(\frac{1}{N}\right)\right)\right]-i\frac{1}{\sqrt{ \pi}} e^{-\pi iz}w^{-2z-1}\left(1+O\left(\frac{1}{N}\right)\right)\right\}.\nonumber
\end{align}
We next let $N\to\infty$ on the both sides of the above equation. By using the dominated convergence theorem, we can take the limit $N\to\infty$ inside the integral sign in \eqref{IN}. Thus, 
\begin{align}\label{lasser region}
\int_0^\infty\int_0^\infty \left(\frac{1}{(t-iw)^{2z+1}}+\frac{1}{(t+iw)^{2z+1}}\right)\cos(2\pi vt)\ dtdv&=-\frac{i}{2w^{2z+1}}\left\{e^{-\pi iz}-\cos(\pi z)\right\}\nonumber\\
&=-\frac{i}{2w^{2z+1}}\left\{e^{-\pi iz}-\frac{e^{i\pi z}+e^{-i\pi z}}{2}\right\}\nonumber\\
&=\frac{i}{2w^{2z+1}}\left\{\frac{e^{i\pi z}-e^{-i\pi z}}{2}\right\},
\end{align}
which proves our theorem for $-\frac{\pi}{2}<\mathrm{\arg}(w)<\frac{\pi}{4}$. We next prove  the result in the remaining region $\frac{\pi}{4}\leq\mathrm{\arg}(w)<\frac{\pi}{2}$.

 By invoking the asymptotic \cite[p.~411, Formula 16.11.7]{nist} twice, for $\frac{\pi}{4}\leq\mathrm{\arg}(w)<\frac{\pi}{2}$, as $N\to\infty$,
\begin{align}\label{2f2nist}
{}_2F_2\left(\frac{1}{2},1;1-z,\frac{1}{2}-z;N\pi^2w^2\right)&\sim\frac{\Gamma(1-z)\Gamma\left(\frac{1}{2}-z\right)}{\Gamma(1/2)}\left\{\frac{e^{\pi i/2}}{\sqrt{N}\pi w}\sum_{k=0}^\infty\frac{(-1)^k\Gamma\left(\frac{1}{2}+k\right)\Gamma\left(\frac{1}{2}-k\right)}{k!\Gamma\left(\frac{1}{2}-z-k\right)\Gamma\left(-z-k\right)}\left(N\pi^2w^2e^{-\pi i}\right)^{-k}\right.\nonumber\\
&\quad\left.+\frac{e^{\pi i}}{N\pi^2 w^2}\sum_{k=0}^\infty\frac{(-1)^k\Gamma\left(-\frac{1}{2}-k\right)}{k!\Gamma\left(-z-k\right)\Gamma\left(-\frac{1}{2}-z-k\right)}\left(N\pi^2w^2e^{-\pi i}\right)^{-k}\right.\nonumber\\
&\quad\left.+\left(N\pi^2w^2\right)^{2z}e^{N\pi^2w^2}\sum_{k=0}^\infty C_k(N\pi^2w^2)^{-k}\right\},
\end{align}
and 
\begin{align}\label{1f1nist}
{}_1F_1\left(z+\frac{1}{2};\frac{1}{2};N\pi^2w^2\right)&\sim\frac{\Gamma(1/2)}{\Gamma\left(z+\frac{1}{2}\right)}\left\{(N\pi^2w^2e^{-\pi i})^{-(z+1/2)}\sum_{k=0}^\infty\frac{(-1)^k\Gamma\left(\frac{1}{2}+z+k\right)}{k!\Gamma(k-z)}(N\pi^2w^2e^{-\pi i})^{-k}\right.\nonumber\\
&\qquad\qquad\qquad\left.+(N\pi^2w^2)^ze^{N\pi^2w^2}\sum_{k=0}^\infty C_k(N\pi^2w^2)^{-k}\right\},
\end{align}
where
\begin{align*}
C_k=-\frac{1}{k}\sum_{m=0}^{k-1}C_me_{k,m},
\end{align*}
with $C_0=1$ and
\begin{align*}
e_{k,m}=2(m-z)_{(k+1-m)}\left(z-\frac{1}{2}\right)-2z\left(\frac{1}{2}-z+m\right)_{(k+1-m)}.
\end{align*}
Upon simplifying  \eqref{before prud}, \eqref{2f2nist} and \eqref{1f1nist}, and observing that the terms containing $e^{N\pi^2w^2}$ cancel each other out, for $\frac{\pi}{4}\leq\mathrm{\arg}(w)<\frac{\pi}{2}$,
\begin{align*}
I(w,z,N)=\frac{e^{\pi i/2}}{2w^{2z+1}}\left(e^{\pi iz}-\cos(\pi z)\right)+O\left(\frac{1}{\sqrt{N}}\right)
\end{align*}
as $N\to\infty$. Employing the dominated convergence theorem to take limit $N\to\infty$ inside the double integral, we deduce that
\begin{align*}
\int_0^\infty\int_0^\infty \left(\frac{1}{(t-iw)^{2z+1}}+\frac{1}{(t+iw)^{2z+1}}\right)\cos(2\pi vt)\ dtdv&=-\frac{1}{2w^{2z+1}}\sin(\pi z).
\end{align*}
This along with \eqref{lasser region} completes the proof of the theorem for $-\frac{\pi}{2}<\arg(w)<\frac{\pi}{2}$.
\end{proof}

As discussed in the introduction, Guinand's generalization of Poisson's summation formula \cite[Theorem 1]{apg1} is critical to prove Theorem \ref{nagoyagen}. We record Guinand's result in the following theorem.
\begin{theorem}\label{Guind-pois}
\textit{If $f(x)$ is an integral, $f(x)$ tends to zero as $x\rightarrow\infty$, and $xf'(x)$ belongs to $L^p(0,\infty)$, for some p, $1<p\leq 2$, then}
\begin{align*}
\lim_{M\rightarrow\infty}\left(\sum_{m=1}^M f(m)-\int_0^M f(v)\, \mathrm{d}v\right)=\lim_{M\rightarrow\infty}\left(\sum_{m=1}^M g(m)-\int_0^M g(v)\, \mathrm{d}v\right),
\end{align*}
where
\begin{align*}
g( x )=2\int_0^{\rightarrow \infty}f(t)\cos(2\pi x t)\, \mathrm{d}t.
\end{align*}
\end{theorem}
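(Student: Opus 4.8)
The plan is to reduce the assertion to the classical Poisson summation formula by means of Euler--Maclaurin summation, treating $f$ and $g$ symmetrically at the end.

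First I would apply the Euler--Maclaurin formula with the first periodic Bernoulli function $\{v\}-\tfrac12$. Since $f$ is an integral, $f'\in L^1_{\mathrm{loc}}[0,\infty)$, and for every positive integer $M$,
\begin{align*}
\sum_{m=1}^{M} f(m)-\int_0^{M} f(v)\,dv=\int_0^{M}\Big(\{v\}-\tfrac12\Big)f'(v)\,dv+\tfrac12\big(f(M)-f(0)\big).
\end{align*}
Next I would observe that in fact $f'\in L^1(0,\infty)$: integrability near $0$ is the assumption that $f$ is an integral, and near infinity H\"older's inequality gives $\int_1^{\infty}|f'|\le\|vf'\|_{L^p}\big(\int_1^{\infty}v^{-p'}\,dv\big)^{1/p'}<\infty$, because $1<p\le2$ forces $p'\ge2>1$. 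Hence $f(M)\to0$ (also assumed) and the improper integral $\int_0^{\infty}(\{v\}-\tfrac12)f'(v)\,dv$ converges absolutely, so the left-hand limit in the theorem exists.

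The second step is to insert the Fourier expansion $\{v\}-\tfrac12=-\sum_{k\ge1}\frac{\sin(2\pi kv)}{\pi k}$, whose symmetric partial sums $S_K(v)$ are uniformly bounded on $\mathbb{R}$ and satisfy $S_K(v)\to\{v\}-\tfrac12$ off $\mathbb{Z}$. Writing the remainder $R_K=\{v\}-\tfrac12-S_K$, integrating each $\int_0^{M}\sin(2\pi kv)f'(v)\,dv$ by parts (the boundary terms vanish because $M\in\mathbb{Z}$ and $\sin0=0$), and using $f'\in L^1$ to pass $M\to\infty$ and then $K\to\infty$ (dominated convergence against $|f'|$ disposes of $\int_0^{\infty}R_Kf'$), one arrives at the convergent series identity
\begin{align*}
\lim_{M\to\infty}\Big(\sum_{m=1}^{M}f(m)-\int_0^{M}f(v)\,dv\Big)=\sum_{k=1}^{\infty}g(k)-\tfrac12 f(0),
\end{align*}
where $g(k)=2\int_0^{\to\infty}f(v)\cos(2\pi kv)\,dv$ (these integrals converge because $f'\in L^1$).

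It then remains to show the right-hand side of the theorem equals the same quantity. Since $\sum_{m=1}^{N}g(m)\to\sum_{k\ge1}g(k)$ is already known, this reduces to proving $\int_0^{N}g(v)\,dv\to\tfrac12 f(0)$ as $N\to\infty$. Formally, interchanging the two improper integrals turns $\int_0^{N}g(v)\,dv$ into $\frac1\pi\int_0^{\infty}f(t)\frac{\sin(2\pi Nt)}{t}\,dt$, which tends to $\tfrac12 f(0)$ by the Dirichlet localization principle since $f$ is absolutely continuous at $0$. The honest content --- and the step I expect to be the main obstacle --- is justifying that interchange, the double integral not being absolutely convergent: one must insert a convergence factor (a Gaussian or $e^{-\delta t}$), interchange legitimately, and let $\delta\to0$ with uniform control in $v\in(0,N)$. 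This is precisely where the hypothesis $vf'\in L^p$ with $1<p\le2$ is used in an essential way rather than for mere bookkeeping: one integration by parts gives $v\,g(v)=-\tfrac1\pi\int_0^{\to\infty}\sin(2\pi vt)f'(t)\,dt$, and splitting $f'=f'\mathbf{1}_{(0,1)}+f'\mathbf{1}_{(1,\infty)}$, the Hausdorff--Young inequality places the sine transform of the second piece in $L^{p'}$; this furnishes the decay and local integrability of $g$ needed to run the regularized Fubini argument and to confirm the stated convergences. Combining the two halves yields $\lim_{N\to\infty}\big(\sum_{m=1}^{N}g(m)-\int_0^{N}g(v)\,dv\big)=\sum_{k\ge1}g(k)-\tfrac12 f(0)=\lim_{M\to\infty}\big(\sum_{m=1}^{M}f(m)-\int_0^{M}f(v)\,dv\big)$, which is the theorem.
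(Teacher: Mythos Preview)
The paper does not supply its own proof of this statement; it is quoted verbatim from Guinand \cite[Theorem 1]{apg1} and used as a black box in the proof of Theorem~\ref{nagoyagen}. So there is no in-paper argument against which to compare your attempt.

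That said, your route via Euler--Maclaurin and the Fourier expansion of the sawtooth is a natural and essentially classical approach to Poisson-type formulas. Your reduction of the left-hand limit to $\sum_{k\ge1}g(k)-\tfrac12 f(0)$ is clean: the observation that $f'\in L^1(0,\infty)$ (H\"older on $[1,\infty)$ with $vf'\in L^p$ and $v^{-1}\in L^{p'}$) and the uniform boundedness of the sawtooth partial sums make the dominated-convergence step honest. You are also right that the crux is the other half, namely showing $\int_0^N g(v)\,dv\to\tfrac12 f(0)$ and justifying the interchange that rewrites this as a Dirichlet integral; this is precisely where Guinand's original argument invokes the $L^p$ Fourier theory (Titchmarsh/Hausdorff--Young), and your remark that $f'\mathbf{1}_{(1,\infty)}\in L^p$ (since $t\ge1$ there) so that its sine transform lies in $L^{p'}$ is the correct lever. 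What would still need to be written out in full is the local integrability of $g$ near $v=0$ (your bound $|g(v)|\le C/v$ from $|vg(v)|\le\pi^{-1}\|f'\|_1$ is too crude by itself) and the regularized Fubini argument with the convergence factor; but the architecture of your proof is sound and close in spirit to Guinand's.
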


\begin{proof}[Theorem \textup{\ref{nagoyagen}}][]
Let 
\begin{align}
f(v)&:=\frac{1}{(v-iw)^{2z+1}}+\frac{1}{(v+iw)^{2z+1}},\quad \left(\mathrm{Re}(w)>0, \mathrm{Re}(z)>0\right),\nonumber\\
g(x)&:=2\int_0^\infty\left(\frac{1}{(v-iw)^{2z+1}}+\frac{1}{(v+iw)^{2z+1}}\right)\cos(2\pi xv)\ dv.\label{gdef}
\end{align}
Now employ Theorem \ref{Guind-pois} with $f(x)$ and $g(x)$ as above. Invoking Theorem \ref{intiszero}, we see that
\begin{align}
\sum_{n=1}^\infty g(n)&=\lim_{M\to\infty}\Bigg\{\sum_{n=1}^M\left(\frac{1}{(n-iw)^{2z+1}}+\frac{1}{(n+iw)^{2z+1}}\right)\nonumber\\
&\quad-\int_0^M\left(\frac{1}{(t-iw)^{2z+1}}+\frac{1}{(t+iw)^{2z+1}}\right)\ dt\Bigg\}-\frac{1}{w^{2z+1}}\sin(\pi z).
\end{align}
Note that series and integral on the right-hand side of the above equation exist individually in the limit $M\to\infty$. Therefore,
\begin{align}\label{el}
\sum_{n=1}^\infty g(n)=\sum_{n=1}^\infty\left(\frac{1}{(n-iw)^{2z+1}}+\frac{1}{(n+iw)^{2z+1}}\right)-\int_0^\infty\left(\frac{1}{(t-iw)^{2z+1}}+\frac{1}{(t+iw)^{2z+1}}\right)\ dt-\frac{1}{w^{2z+1}}\sin(\pi z).
\end{align}
It is easy to see that for Re$(z)>0$,
\begin{align}\label{hurwitz}
\sum_{n=1}^\infty\frac{1}{(n\mp iw)^{2z+1}}=\zeta(1+2z,1\mp iw).
\end{align}
Also,
\begin{align}\label{cosinez}
\int_0^\infty\frac{dt}{(t\mp iw)^{2z+1}}=\frac{(\mp i)^{-2z}w^{-2z}}{2z}.
\end{align}
Substitute  \eqref{hurwitz} and \eqref{cosinez} in \eqref{el} to deduce that
\begin{align}\label{el1}
\sum_{n=1}^\infty g(n)&=\zeta(1+2z,iw)+\zeta(1+2z,-iw)-\frac{\cos(\pi z)}{zw^{2z}}-\frac{1}{w^{2z+1}}\sin(\pi z)\nonumber\\
&=\zeta(1+2z,iw)+\zeta(1+2z,-iw)-\frac{\cos(\pi z)}{zw^{2z}},
\end{align}
which follows using the fact 
\begin{equation}\label{zeta shift}
\zeta(s,a+1)=\zeta(s,a)-a^{-s}.
\end{equation}
Therefore, \eqref{gdef} and \eqref{el1} yield Theorem \ref{nagoyagen}.
\end{proof}

%
\begin{proof}[Corollary \textup{\ref{cornagoya}}][]
We wish to take limit $z\to0$ in \eqref{nagoyageneqn}.
To that end, we use expansions of the functions involved around $z=0$.  As $s\to1$, we have \cite[p.~1038, Formula 9.533.2]{gr}
\begin{align}
\zeta(s,a)=\frac{1}{s-1}-\psi(a)+O(|s-1|).\nonumber
\end{align}
The above equation implies that, as $z\to0$,
\begin{align}\label{huw1}
\zeta(1+2z,\pm iw)&=\frac{1}{2z}-\psi(\pm iw)+O(|z|).
\end{align}
It is easy to see that
\begin{align}\label{cosexp}
\frac{\cos(\pi z)}{zw^{2z}}=\frac{1}{z}-2\log(w)+O(|z|),
\end{align}
as $z\to0$. Using \eqref{huw1} and \eqref{cosexp}, we deduce that
\begin{align}\label{beforelimit}
\lim_{z\to0}\left(\zeta(1+2z,iw)+\zeta(1+2z,-iw)-\frac{\cos(\pi z)}{zw^{2z}}\right)&=-\psi(iw)-\psi(-iw)+2\log(w).
\end{align}
Let $z\to0$ on both sides of \eqref{nagoyageneqn} and use \eqref{beforelimit} so that
\begin{align}\label{beforefunct}
2\sum_{n=1}^\infty \int_0^\infty\frac{v\cos(2\pi n v)}{v^2+w^2}\ dv=\frac{1}{2}\left\{2\log(w)-\left(\psi(iw)+\psi(-iw)\right)\right\}.
\end{align}
Make the change of variable $2\pi nv=t$ on the right-hand side of \eqref{beforefunct} to arrive at
\begin{align*}
2\sum_{n=1}^\infty \int_0^\infty\frac{t\cos(t)}{t^2+(2\pi w)^2n^2}\ dt=\log(w)-\frac{1}{2}\left(\psi(iw)+\psi(-iw)\right).
\end{align*}
Finally let $w=u/(2\pi)$ in the above equation to conclude the proof of the corollary.
\end{proof}

Theorem \ref{minusonelemma} is proved next.
\begin{proof}[Theorem \textup{\ref{minusonelemma}}][]
From \cite[Lemma 9.1]{dkk1}, for Re$(w)>0$, we have
\begin{equation}\label{zero}
\int_{0}^{\infty}\frac{t\cos t\, dt}{t^2+w^2}=\sinh(w)\mathrm{Shi}(w)-\cosh(w)\mathrm{Chi}(w).
\end{equation}
Now \eqref{minusone} follows by expanding $\frac{t}{(t^2+w^2)}$ in partial fractions, that is, by writing $\frac{t}{t^2+w^2}=\frac{1}{2}\left(\frac{1}{t-iw}+\frac{1}{t+iw}\right)$, and then by performing integration by parts $2m$ times the left-hand side of \eqref{zero}. 
\end{proof}

\section{Proof of our generalization of a formula of Ramanujan}\label{section ram thm}
We begin with the following result of Hardy \cite[pp.~56-57]{hardypaper2}. This result helps us justify the interchange of the order of the summation and integration having principal values, and will be employed in the proof of Theorem \ref{ram with a}.
\begin{proposition}\label{hardy theorem}
Let 
\begin{align*}
S(x)=\sum_{k=0}^\infty u_k(x)
\end{align*}
be a series whose terms are functions of $x$ and is convergent with the possible exception of a closed enumerable set of points for values of $x$ in a finite interval $(a,A)$. Let $\alpha$ denote one such point in this set. If 
\begin{enumerate}
\item the series $S(x)$ is integrable term by term over any part of $(a,A)$ which does not include $\alpha$,  \label{cond 1}
\item the function 
\begin{align}
F(x)=\sum_{k=0}^\infty\mathrm{PV}\int_a^x u_k(t) dt\nonumber
\end{align}
is a continuous function of $x$ except at $\alpha$, \label{cond 2} and 
\item 
\begin{align}\label{limit zero hypothesis}
\lim_{\epsilon\to0}\left\{F(\alpha-\epsilon)-F(\alpha+\epsilon)\right\}=0.
\end{align}
\end{enumerate}
Then, one can interchange the order of summation and integration, namely,
\begin{align}
\mathrm{PV}\int_a^A\sum_{k=0}^\infty u_k(t) dt=\sum_{k=0}^\infty\mathrm{PV}\int_a^Au_k(t).\nonumber
\end{align}
\end{proposition}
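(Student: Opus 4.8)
The plan is to reduce the assertion to the single equality $\mathrm{PV}\int_a^A\sum_{k}u_k(t)\,dt=F(A)$, since the right-hand side of the claimed identity is exactly $F(A)$ by the definition of $F$. The whole point is that the only obstruction to ordinary term-by-term integration is the point $\alpha$, so I would split $(a,A)$ at $\alpha$, treat the two halves with the term-by-term integrability hypothesis \eqref{cond 1}, and then let the excised neighbourhood of $\alpha$ shrink, invoking the hypothesis \eqref{limit zero hypothesis} that the one-sided limits of $F$ at $\alpha$ coincide. For clarity I would first carry this out when $\alpha$ is the sole exceptional point and then indicate the general closed enumerable case.

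Concretely, I would fix a small $\epsilon>0$ with $a<\alpha-\epsilon$ and $\alpha+\epsilon<A$. On $(a,\alpha-\epsilon)$ and on $(\alpha+\epsilon,A)$, neither of which contains $\alpha$, hypothesis \eqref{cond 1} gives term-by-term integrability:
\begin{equation*}
\int_a^{\alpha-\epsilon}S(t)\,dt=\sum_{k=0}^{\infty}\int_a^{\alpha-\epsilon}u_k(t)\,dt,\qquad\int_{\alpha+\epsilon}^{A}S(t)\,dt=\sum_{k=0}^{\infty}\int_{\alpha+\epsilon}^{A}u_k(t)\,dt,
\end{equation*}
both series being convergent. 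Next I would identify the two sums as values of $F$. Since $\alpha\notin[a,\alpha-\epsilon]$ we have $\mathrm{PV}\int_a^{\alpha-\epsilon}u_k=\int_a^{\alpha-\epsilon}u_k$, so the first sum equals $F(\alpha-\epsilon)$. For the second I would split each principal value at $\alpha+\epsilon$, namely $\mathrm{PV}\int_a^{A}u_k=\mathrm{PV}\int_a^{\alpha+\epsilon}u_k+\int_{\alpha+\epsilon}^{A}u_k$, and sum over $k$; breaking the convergent series $\sum_k\bigl(\mathrm{PV}\int_a^{\alpha+\epsilon}u_k+\int_{\alpha+\epsilon}^{A}u_k\bigr)$ into two separate series is legitimate because $\sum_k\mathrm{PV}\int_a^{\alpha+\epsilon}u_k=F(\alpha+\epsilon)$ converges (the series defining $F$ converges, by hypothesis \eqref{cond 2}) and $\sum_k\int_{\alpha+\epsilon}^{A}u_k=\int_{\alpha+\epsilon}^{A}S$ converges by \eqref{cond 1}. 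Hence $F(A)=F(\alpha+\epsilon)+\int_{\alpha+\epsilon}^{A}S(t)\,dt$.

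Combining the two pieces, for every sufficiently small $\epsilon>0$,
\begin{equation*}
\int_a^{\alpha-\epsilon}S(t)\,dt+\int_{\alpha+\epsilon}^{A}S(t)\,dt=F(A)+\bigl(F(\alpha-\epsilon)-F(\alpha+\epsilon)\bigr).
\end{equation*}
By \eqref{limit zero hypothesis} the bracket tends to $0$ as $\epsilon\to0^{+}$, so the left-hand side tends to $F(A)$; by the definition of the principal value at $\alpha$ this is exactly $\mathrm{PV}\int_a^{A}\sum_{k}u_k(t)\,dt=F(A)=\sum_{k}\mathrm{PV}\int_a^{A}u_k(t)\,dt$, which is the claim. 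For an arbitrary closed enumerable exceptional set one iterates, removing one point at a time and using the continuity of $F$ away from the set to control the passage to the limit where the exceptional points accumulate.

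I do not expect a deep obstacle: once the splitting is set up the argument is essentially bookkeeping, with \eqref{limit zero hypothesis} doing the decisive work in one line. The only place demanding care is the one flagged above, namely that the convergent series $\sum_k\bigl(\mathrm{PV}\int_a^{\alpha+\epsilon}u_k+\int_{\alpha+\epsilon}^{A}u_k\bigr)$ splits into two separately convergent series — hypotheses \eqref{cond 1} and \eqref{cond 2} are exactly what make this, and the limit passage for an accumulating exceptional set, go through.
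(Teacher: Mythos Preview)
The paper does not prove this proposition at all: it is quoted verbatim from Hardy \cite[pp.~56--57]{hardypaper2} and used as a black box, with Remark~\ref{remark2} recording Hardy's sufficient criterion for hypothesis~\eqref{limit zero hypothesis}. So there is no ``paper's own proof'' to compare against.

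That said, your argument is correct and is essentially Hardy's original one: excise a symmetric $\epsilon$-neighbourhood of $\alpha$, apply hypothesis~\eqref{cond 1} on the two remaining pieces to identify the truncated integral of $S$ with $F(\alpha-\epsilon)+\bigl(F(A)-F(\alpha+\epsilon)\bigr)$, and let $\epsilon\to0$ using~\eqref{limit zero hypothesis}. The point you flag---that the series for $F(A)$ can be split at $\alpha+\epsilon$ into two separately convergent series---is exactly where hypotheses~\eqref{cond 1} and~\eqref{cond 2} are both needed, and you handle it correctly. Your final remark about iterating over a closed enumerable exceptional set is also in line with Hardy's treatment.
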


\begin{remark}\label{remark2}
We note that \cite[pp.~58--59, Section 7]{hardypaper2} \textup{(}also see \cite[p.~27]{hardy}\textup{)} if
\begin{align}\label{un}
u_k(x)=\frac{v_k(x)}{x-\alpha},
\end{align}
where $v_k(x)$ is a function of $x$ and has a continuous derivative for all $x\in[a,A]$, then 
\begin{align}
\mathrm{PV}\int_{\alpha-\epsilon}^{\alpha+\epsilon}u_k(x)dx=2\epsilon v_k'(\alpha+\mu),\ \mathrm{for\ some}\ \mu\in[-\epsilon,\epsilon].\nonumber
\end{align}
Also if $|v_k'(x)|<V_k$ for all values $x\in[a,A]$, $V_k$ being independent of $x$ and $\sum_{k=0}^\infty V_k$ is convergent, then the condition \eqref{limit zero hypothesis} holds true for $u_k(x)$ given in \eqref{un}.
\end{remark}

In the next lemma, we justify the interchange of the order of the summation and principal value integral.
\begin{lemma}\label{intechange of PV and summation}
Let $k\in\mathbb{N}$, $0<a\leq1$ and $\mathrm{Re}(y)>0$. For $\mathrm{Re}(s)>2$, we have
\begin{align}\label{interchange lemma}
&\sum_{k=1}^\infty\sin\left(2\pi ak\right) \mathrm{PV}\int_0^\infty x^{s-1}e^{-4\pi^2 kx/y}\cot(\pi x)dx= \mathrm{PV}\int_0^\infty\left( \sum_{k=1}^\infty\sin\left(2\pi ak\right)e^{-4\pi^2 kx/y}\right)x^{s-1}\cot(\pi x)dx.
\end{align}
\end{lemma}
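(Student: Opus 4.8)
The plan is to verify the three hypotheses of Proposition \ref{hardy theorem} for the series obtained by expanding the summand, so that the interchange is legitimate. Write $u_k(x) = \sin(2\pi ak)\, e^{-4\pi^2 kx/y}\, x^{s-1}\cot(\pi x)$. On the interval $(0,A)$ for any fixed $A>0$, the only points where $\cot(\pi x)$ — and hence the series — fails to converge nicely are the positive integers $x=1,2,3,\dots$, but within a finite interval $(0,A)$ there are only finitely many of these, so after splitting $(0,\infty)$ into finitely many pieces we may treat one such singular point $\alpha \in \mathbb{N}$ at a time. Near $x=\alpha$ we write $\cot(\pi x) = \dfrac{1}{\pi(x-\alpha)} + h_\alpha(x)$ where $h_\alpha$ is analytic in a neighbourhood of $\alpha$; thus $u_k(x)$ splits into a part with an honest singularity of the form $v_k(x)/(x-\alpha)$ with $v_k(x) = \frac{1}{\pi}\sin(2\pi ak)\, e^{-4\pi^2 kx/y}\, x^{s-1}$, plus a smooth part to which term-by-term integration applies without any principal value subtlety.

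The first step is condition (1): on any closed subinterval of $(0,\infty)$ avoiding the integers, $\cot(\pi x)$ is bounded and smooth, and $\sum_k \sin(2\pi ak) e^{-4\pi^2 kx/y} x^{s-1}$ converges uniformly there (the exponential factor $e^{-4\pi^2 kx/y}$ decays geometrically in $k$ uniformly for $x$ bounded away from $0$, since $\mathrm{Re}(y)>0$), so term-by-term integration over such pieces is immediate. The second step is condition (2): I would show $F(x) = \sum_{k=1}^\infty \mathrm{PV}\int_0^x u_k(t)\,dt$ is continuous away from the integers. Away from a singular point this is clear from uniform convergence; approaching a singular point $\alpha$, the principal value integral $\mathrm{PV}\int_0^x$ is continuous because the principal value smooths out the logarithmic singularity, and one checks the series of these continuous functions converges uniformly near $\alpha$ using the exponential decay in $k$ together with the bound on $v_k$ and its derivative. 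The third and main step is condition (3), namely $\lim_{\epsilon\to 0}\{F(\alpha-\epsilon) - F(\alpha+\epsilon)\} = 0$ at each integer $\alpha$. Here I would invoke Remark \ref{remark2}: for the singular part $v_k(x)/(x-\alpha)$ we have $\mathrm{PV}\int_{\alpha-\epsilon}^{\alpha+\epsilon} u_k(x)\,dx = 2\epsilon\, v_k'(\alpha+\mu_k)$ for some $\mu_k\in[-\epsilon,\epsilon]$, while the smooth remainder contributes something that is $O(\epsilon)$ termwise with a summable bound; so it suffices to bound $|v_k'(x)|$ on a neighbourhood of $\alpha$ by a constant $V_k$ with $\sum_k V_k < \infty$. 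Differentiating $v_k(x) = \frac{1}{\pi}\sin(2\pi ak) e^{-4\pi^2 kx/y} x^{s-1}$ gives terms carrying a factor of $k$ from the exponent's derivative and a factor $x^{s-2}$, all multiplied by $e^{-4\pi^2 kx/y}$; on a fixed closed neighbourhood of $\alpha$ (bounded away from $0$) this is bounded by $C(1+k)\, e^{-c k}$ for constants $C,c>0$ depending only on $\alpha$, $y$ and $s$, which is summable. Hence condition \eqref{limit zero hypothesis} holds.

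I expect the main obstacle to be the bookkeeping in condition (3): one must be careful that the decomposition $\cot(\pi x) = 1/(\pi(x-\alpha)) + h_\alpha(x)$ is only local, so the argument has to be run on a small fixed neighbourhood of each integer $\alpha$ and then patched with condition (1) on the complementary closed pieces, and one must verify that the ``smooth remainder'' part of $u_k$ near $\alpha$ (the piece involving $h_\alpha$) also has a summable sup-bound in $k$ so that its contribution to $F(\alpha-\epsilon)-F(\alpha+\epsilon)$ vanishes in the limit. A secondary point requiring a word of care is the behaviour at $x=0$: since $\mathrm{Re}(s)>2$, the factor $x^{s-1}$ kills the mild singularity of $\cot(\pi x)\sim 1/(\pi x)$ there, so $x=0$ is not a singular point of the series in the sense of Proposition \ref{hardy theorem}, and convergence of all integrals near $0$ is unproblematic. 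Once the three conditions are checked, Proposition \ref{hardy theorem} applied on each finite interval $(0,A)$, followed by letting $A\to\infty$ (justified again by the exponential decay ensuring the tail integrals are uniformly small), yields exactly \eqref{interchange lemma}.
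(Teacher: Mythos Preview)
Your approach is correct and takes a genuinely different route from the paper's. You apply Proposition~\ref{hardy theorem} directly on a finite interval $(0,A)$ to the terms $u_k(x)=\sin(2\pi ak)\,e^{-4\pi^2kx/y}x^{s-1}\cot(\pi x)$, handling each integer $\alpha<A$ via the local splitting $\cot(\pi x)=\tfrac{1}{\pi(x-\alpha)}+h_\alpha(x)$; your bound $|v_k'|\le C(1+k)e^{-ck}$ on a neighbourhood of $\alpha$ is sharper and more natural than the paper's polynomial bound $V_k=O(k^{-2})$, which is extracted there from $e^{x}>x^{3}/3!$. The paper instead replaces $\cot(\pi x)$ by its global partial-fraction expansion \eqref{cotx}, and then must (i)~justify swapping the resulting sum $\sum_n$ with the integral via a smooth-cutoff argument borrowed from \cite{bdrz} (see \eqref{inter of n and int}--\eqref{in terms of m}), (ii)~swap $\sum_k\sum_n$ by Fubini, (iii)~apply Proposition~\ref{hardy theorem} for each fixed $n$ on the bounded interval $[\delta,n+1]$ containing the \emph{single} pole $x=n$, and (iv)~reassemble. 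Your route sidesteps the auxiliary $n$-sum entirely. What the paper's route buys is that each PV integral it meets has exactly one singularity, so no passage to the limit $A\to\infty$ through infinitely many poles of $\cot(\pi x)$ is required. That limit is also the one place your write-up is thin: ``exponential decay makes the tail small'' is not a plain tail estimate, since $\cot(\pi x)$ blows up at every integer beyond $A$. The fix is to bound $\bigl|\mathrm{PV}\int_{N-1/2}^{N+1/2}u_k(x)\,dx\bigr|\ll N^{\mathrm{Re}(s)-1}(1+k)\,e^{-\lambda kN}$ (by the same $v_k'$ estimate you used for condition~(3), now on the unit interval about $N$), so that the double tail $\sum_{k\ge1}\sum_{N>A}$ is finite and tends to $0$ as $A\to\infty$; this is routine but deserves a sentence.
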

\begin{proof}
	Note that the presence of $\cot(\pi x)$ implies infinitely many singularities of the integrand on the left side of \eqref{interchange lemma}. To handle this integral efficiently, we use 
\begin{align}\label{cotx}
\pi\cot(\pi x)=\frac{1}{x}+\sum_{n=1}^\infty\frac{2x}{x^2-n^2},
\end{align}	
so that 
\begin{align}\label{first interchange}
\sum_{k=1}^\infty\sin\left(2\pi ak\right) \mathrm{PV}\int_0^\infty x^{s-1}e^{-4\pi^2 kx/y}\cot(\pi x)dx&=\frac{1}{\pi}\sum_{k=1}^\infty\sin\left(2\pi ak\right) \int_0^\infty x^{s-2}e^{-4\pi^2 kx/y}dx\nonumber\\&\quad+\frac{2}{\pi}\sum_{k=1}^\infty\sin\left(2\pi ak\right) \mathrm{PV}\int_0^\infty x^{s}e^{-4\pi^2 kx/y}\sum_{n=1}^\infty\frac{1}{x^2-n^2} dx.
\end{align}
We can interchange the order of summation and integration in the first expression on the right-hand side of \eqref{first interchange} by easily employing \cite[p.~30, Theorem 2.1]{temme}.  The delicate part is to show the same for the second expression on the right,  which is done next. We first show that
\begin{align}\label{inter of n and int}
\mathrm{PV}\int_0^\infty x^{s}e^{-4\pi^2 kx/y}\sum_{n=1}^\infty\frac{1}{x^2-n^2} dx=\sum_{n=1}^\infty\mathrm{PV}\int_0^\infty \frac{x^{s}e^{-4\pi^2 kx/y}}{x^2-n^2} dx.
\end{align}
The ingenious argument given in \cite[pp.~909-911]{bdrz} can be adapted here as well to prove  the above claim. We give the complete details though to make the paper self-contained.

Let $w(t)\in C_0^\infty$ be a smooth function such that $0\leq w(t)\leq1$, $\forall\ t\in\mathbb{R}$, $w(t)$ has compact support in $\left(-\frac{1}{3},\frac{1}{3}\right)$, and $w(t)=1,\ t\in\left(-\frac{1}{4},\frac{1}{4}\right)$. Note that the right-hand side of \eqref{inter of n and int} can be rewritten as
\begin{align}\label{plus two}
\sum_{n=1}^\infty\mathrm{PV}\int_0^\infty \frac{x^{s}e^{-4\pi^2 kx/y}}{x^2-n^2} dx&=\sum_{n=1}^\infty\int_0^\infty x^{s}e^{-4\pi^2 kx/y}\frac{(1-w(x-n))}{x^2-n^2} dx+\sum_{n=1}^\infty\mathrm{PV}\int_0^\infty x^{s}e^{-4\pi^2 kx/y}\frac{w(x-n)}{x^2-n^2} dx.
\end{align}
Again, an easy application of \cite[p.~30, Theorem 2.1]{temme} allows us to interchange the order of summation and integration in the first expression of \eqref{plus two}. If $m$ is a positive integer such that $m-\frac{1}{2}\leq x\leq m+\frac{1}{2}$, then 
\begin{align}\label{in terms of m}
\sum_{n=1}^\infty \frac{w(x-n)}{x^2-n^2}=\frac{w(x-m)}{x^2-m^2}.
\end{align}
Hence, using \eqref{in terms of m} in the second step below,  we have
\begin{align}
\mathrm{PV}\int_0^\infty x^{s}e^{-4\pi^2 kx/y}\sum_{n=1}^\infty\frac{w(x-n)}{x^2-n^2} dx&=\sum_{m=1}^\infty \mathrm{PV}\int_{m-1/2}^{m+1/2}x^{s}e^{-4\pi^2 kx/y}\sum_{n=1}^\infty\frac{w(x-n)}{x^2-n^2} dx\nonumber\\
&=\sum_{m=1}^\infty \mathrm{PV}\int_{m-1/2}^{m+1/2}x^{s}e^{-4\pi^2 kx/y}\frac{w(x-m)}{x^2-m^2} dx\nonumber\\
&=\sum_{m=1}^\infty \mathrm{PV}\int_{0}^{\infty}x^{s}e^{-4\pi^2 kx/y}\frac{w(x-m)}{x^2-m^2} dx.\nonumber
\end{align}
The above fact along with \eqref{plus two} gives
\begin{align*}
\sum_{n=1}^\infty\mathrm{PV}\int_0^\infty \frac{x^{s}e^{-4\pi^2 kx/y}}{x^2-n^2} dx&=\mathrm{PV}\int_0^\infty x^{s}e^{-4\pi^2 kx/y}\sum_{n=1}^\infty\frac{(1-w(x-n))}{x^2-n^2} dx+\mathrm{PV}\int_0^\infty x^{s}e^{-4\pi^2 kx/y}\sum_{n=1}^\infty\frac{w(x-n)}{x^2-n^2} dx\\
&=\mathrm{PV}\int_0^\infty x^{s}e^{-4\pi^2 kx/y}\sum_{n=1}^\infty\frac{1}{x^2-n^2} dx.
\end{align*}
This proves the claim in \eqref{inter of n and int}. Therefore, we can write
\begin{align}
\sum_{k=1}^\infty\sin\left(2\pi ak\right) \mathrm{PV}\int_0^\infty x^{s}e^{-4\pi^2 kx/y}\sum_{n=1}^\infty\frac{1}{x^2-n^2} dx&=\sum_{k=1}^\infty\sin\left(2\pi ak\right) \sum_{n=1}^\infty\mathrm{PV}\int_0^\infty \frac{x^{s}e^{-4\pi^2 kx/y}}{x^2-n^2} dx.\nonumber
\end{align} 
Fubini's theorem allows us to interchange the order of the double sum on the right-hand side of the above expression so as to obtain
\begin{align}\label{over n k}
&\sum_{k=1}^\infty\sin\left(2\pi ak\right) \mathrm{PV}\int_0^\infty x^{s}e^{-4\pi^2 kx/y}\sum_{n=1}^\infty\frac{1}{x^2-n^2} dx=\sum_{n=1}^\infty\sum_{k=1}^\infty\sin\left(2\pi ak\right) \mathrm{PV}\int_0^\infty \frac{x^{s}e^{-4\pi^2 kx/y}}{x^2-n^2} dx.
\end{align}
Now
\begin{align}\label{split}
\sum_{k=1}^\infty\sin\left(2\pi ak\right) \mathrm{PV}\int_0^\infty \frac{x^{s}e^{-4\pi^2 kx/y}}{x^2-n^2} dx&=\sum_{k=1}^\infty\sin\left(2\pi ak\right) \left\{\left(\int_{0}^\delta+\int_{n+1}^\infty\right)\frac{x^{s}e^{-4\pi^2 kx/y}}{x^2-n^2} dx\right.\nonumber\\
&\quad\left.+\frac{1}{2}\int_\delta^{n+1} \frac{x^{s-1}e^{-4\pi^2 kx/y}}{x+n} dx+\frac{1}{2}\mathrm{PV}\int_\delta^{n+1} \frac{x^{s-1}e^{-4\pi^2 kx/y}}{x-n} dx\right\},
\end{align}
where $0<\delta<1$. Note that there is no need to take principal value for the first three integrals on the right-hand side of \eqref{split}. Therefore, it is easy to take the summation inside these integrals using the standard techniques, for example, \cite[p.~30, theorem 2.1]{temme}.  To interchange the order of summation and the last integral in \eqref{split}, we now show that the hypotheses of Proposition \ref{hardy theorem} are satisfied. Let us define
\begin{align}\label{u and v}
u_k(x):=\frac{v_k(x)}{x-n}\quad \mathrm{and}\quad v_k(x):=x^{s-1}e^{-4\pi^2 kx/y}\sin\left(2\pi ak\right).
\end{align}
It is easy to see that the conditions \eqref{cond 1} and \eqref{cond 2}  of Proposition \ref{hardy theorem} are satisfied with $u_k(x)$ being defined in \eqref{u and v}. To fulfill \eqref{limit zero hypothesis}, we show that the equivalent condition discussed in Remark \ref{remark2} is satisfied. To that end, observe that $x\in[\delta,n+1]$ and use $e^{-x}>3!/x^3, x>0$, so that
\begin{align}
|v_k'(x)|&<\left|x^{s-2}e^{-4\pi^2 kx/y}\left(s-1-\frac{4\pi^2kx}{y}\right)\right|<\frac{x^{\mathrm{Re}(s)-5}}{(4\pi^2/y)^3}\frac{3!}{k^3}\left(|s-1|+\frac{4\pi^2kx}{y}\right)\nonumber\\
&<\frac{M}{(4\pi^2/y)^3}\frac{3!}{k^3}\left(|s-1|+\frac{4\pi^2k(n+1)}{y}\right)\nonumber\\
&=:V_k,\nonumber
\end{align}
where we used the fact that the function $x^{\mathrm{Re}(s)-5}$ is continuous on the compact interval $[\delta,n+1]$, and hence bounded  by some constant $M>0$ (which may depend on $\delta$ and $n$). Since the series $\sum_{k=1}^\infty V_k$ converges, all conditions of Proposition \ref{hardy theorem} are satisfied. Hence we can interchange the order of summation and integration even in the case of the last integral of \eqref{split}. This fact along with the discussion following \eqref{split} implies that
\begin{align}\label{interchanged over k}
\sum_{k=1}^\infty\sin\left(2\pi ak\right) \mathrm{PV}\int_0^\infty \frac{x^{s}e^{-4\pi^2 kx/y}}{x^2-n^2} dx&=\mathrm{PV}\int_0^\infty \left(\sum_{k=1}^\infty\sin\left(2\pi ak\right)e^{-4\pi^2 kx/y}\right)\frac{x^{s}}{x^2-n^2} dx.
\end{align}
Using the fact $\sin(\theta)=(e^{i\theta}-e^{-i\theta})/(2i)$, we find 
\begin{align}\label{sin k}
\sum_{k=1}^\infty\sin\left(2\pi ak\right)e^{-4\pi^2 xk/y}&=\frac{1}{2i}\sum_{k=1}^\infty e^{-\left(\frac{4\pi^2 x}{y}-2\pi ia\right)k}-\frac{1}{2i}\sum_{k=1}^\infty e^{-\left(\frac{4\pi^2 x}{y}+2\pi ia\right)k}\nonumber\\
&=\frac{1}{2i}\left(\frac{1}{e^{\frac{4\pi^2 x}{y}-2\pi ia}-1}-\frac{1}{e^{\frac{4\pi^2 x}{y}+2\pi ia}-1}\right).
\end{align}
Substitute the above value in \eqref{interchanged over k} to arrive at
\begin{align}\label{k simp}
\sum_{k=1}^\infty\sin\left(2\pi ak\right) \mathrm{PV}\int_0^\infty \frac{x^{s}e^{-4\pi^2 kx/y}}{x^2-n^2} dx&=\frac{1}{2i}\mathrm{PV}\int_0^\infty \left(\frac{1}{e^{\frac{4\pi^2 x}{y}-2\pi ia}-1}-\frac{1}{e^{\frac{4\pi^2 x}{y}+2\pi ia}-1}\right)\frac{x^{s}}{x^2-n^2} dx.
\end{align}
Equations \eqref{over n k} and \eqref{k simp} yield
\begin{align*}
&\sum_{k=1}^\infty\sin\left(2\pi ak\right) \mathrm{PV}\int_0^\infty x^{s}e^{-4\pi^2 kx/y}\sum_{n=1}^\infty\frac{1}{x^2-n^2} dx\nonumber\\
&=\frac{1}{2i}\sum_{n=1}^\infty\mathrm{PV}\int_0^\infty \left(\frac{1}{e^{\frac{4\pi^2 x}{y}-2\pi ia}-1}-\frac{1}{e^{\frac{4\pi^2 x}{y}+2\pi ia}-1}\right)\frac{x^{s}}{x^2-n^2} dx.
\end{align*}
Again employing the trick that we used after \eqref{inter of n and int} to interchange the order of the summation and integration, one can take the sum over $n$ inside the integral on the right-hand side of the above equation to deduce that
\begin{align}\label{second interchange}
&\sum_{k=1}^\infty\sin\left(2\pi ak\right) \mathrm{PV}\int_0^\infty x^{s}e^{-4\pi^2 kx/y}\sum_{n=1}^\infty\frac{1}{x^2-n^2} dx\nonumber\\
&=\frac{1}{2i}\mathrm{PV}\int_0^\infty \left(\frac{1}{e^{\frac{4\pi^2 x}{y}-2\pi ia}-1}-\frac{1}{e^{\frac{4\pi^2 x}{y}+2\pi ia}-1}\right)\sum_{n=1}^\infty\frac{1}{x^2-n^2} x^sdx.
\end{align}
Substituting \eqref{second interchange} in \eqref{first interchange}, we obtain
\begin{align}
&\sum_{k=1}^\infty\sin\left(2\pi ak\right) \mathrm{PV}\int_0^\infty x^{s-1}e^{-4\pi^2 kx/y}\cot(\pi x)dx\nonumber\\
&=\frac{1}{\pi}\int_0^\infty x^{s-2}\sum_{k=1}^\infty\sin\left(2\pi ak\right)e^{-4\pi^2 kx/y}dx+\frac{1}{i\pi}\mathrm{PV}\int_0^\infty \left(\frac{1}{e^{\frac{4\pi^2 x}{y}-2\pi ia}-1}-\frac{1}{e^{\frac{4\pi^2 x}{y}+2\pi ia}-1}\right)\sum_{n=1}^\infty\frac{1}{x^2-n^2} x^sdx\nonumber\\
&=\frac{1}{\pi}\int_0^\infty \left(\sum_{k=1}^\infty\sin\left(2\pi ak\right)e^{-4\pi^2 kx/y}\right)x^{s-2}dx+\frac{2}{\pi}\int_0^\infty \left(\sum_{k=1}^\infty\sin\left(2\pi ak\right)e^{-4\pi^2 kx/y}\right)\sum_{n=1}^\infty\frac{1}{x^2-n^2} x^sdx,\nonumber
\end{align}
where in the ultimate step we again used \eqref{sin k}. Finally employing \eqref{cotx} in the above equation, we arrive at \eqref{interchange lemma}.
\end{proof}

We have now collected all ingredients to give a proof of our generalization of Ramanujan's formula.
%
\begin{proof}[Theorem \textup{\ref{ram with a}}][]

Letting $\tau= iyj/(2\pi)$, Re$(y)>0$, in \eqref{lipscheqn}, then taking summation over $j\geq1$, and then employing the series definition of the Hurwitz zeta function for Re$(s)>1$,  we obtain\footnote{The case $a=0$ of \eqref{4.3} reduces to a result of Kuylenstierna \cite[Equation (7)]{ku}.}
\begin{align}\label{4.3}
\sum_{n=1}^\infty\frac{(n-a)^{s-1}}{e^{(n-a)y}-1}&=\frac{\Gamma(s)}{(-2\pi i)^s}\sum_{k\in\mathbb{Z}}e^{2\pi iak}\sum_{j=1}^\infty\frac{1}{\left(k+\frac{ijy}{2\pi}\right)^s}\nonumber\\
&=\frac{\Gamma(s)}{y^s}\sum_{k\in\mathbb{Z}}e^{2\pi iak}\zeta\left(s,1-\frac{2\pi ik}{y}\right)\nonumber\\
&=\frac{\Gamma(s)\zeta(s)}{y^s}+\frac{\Gamma(s)}{y^s}\sum_{k=1}^\infty\left\{e^{2\pi iak}\zeta\left(s,1-\frac{2\pi i k}{y}\right)+e^{-2\pi iak}\zeta\left(s,1+\frac{2\pi i k}{y}\right)\right\}.
\end{align}
Invoking the well-known formula \cite[p.~609, Formula 25.11.25]{nist}
\begin{align}\label{mt of hz}
\Gamma(z)\zeta(z,a)=\int_0^\infty\frac{e^{-ax}}{1-e^{-x}}x^{z-1}dx,\qquad (\mathrm{Re}(z)>1,\ \mathrm{Re}(a)>0)
\end{align}
in \eqref{4.3}, we obtain
\begin{align}\label{integral}
\sum_{n=1}^\infty\frac{(n-a)^{s-1}}{e^{(n-a)y}-1}&=\frac{\Gamma(s)\zeta(s)}{y^s}+\frac{1}{y^s}\sum_{k=1}^\infty\int_0^\infty\left(e^{i\left(2\pi ak+\frac{2\pi kt}{y}\right)}+e^{-i\left(2\pi ak+\frac{2\pi kt}{y}\right)}\right)\frac{t^{s-1}}{e^{t}-1}dt\nonumber\\
&=\frac{\Gamma(s)\zeta(s)}{y^s}+\frac{2}{y^s}\sum_{k=1}^\infty\int_0^\infty\cos\left(2\pi a k+\frac{2\pi kt}{y}\right)\frac{t^{s-1}}{e^{t}-1}dt\nonumber\\
&=\frac{\Gamma(s)\zeta(s)}{y^s}+\frac{2}{y^s}\sum_{k=1}^\infty\cos(2\pi a k)\int_0^\infty\cos\left(\frac{2\pi kt}{y}\right)\frac{t^{s-1}}{e^{t}-1}dt\nonumber\\
&\hspace{1.8cm}-\frac{2}{y^s}\sum_{k=1}^\infty\sin(2\pi a k)\int_0^\infty\sin\left(\frac{2\pi kt}{y}\right)\frac{t^{s-1}}{e^{t}-1}dt.
\end{align}
Our next goal is to evaluate the integrals in \eqref{integral}. From \cite[p.~42, Formula 1.5.2]{ober}, for $0<\mathrm{Re}(z)<1$, we have
\begin{align}
\int_0^\infty\cos(x)x^{z-1}dx=\Gamma(z)\cos\left(\frac{\pi z}{2}\right).\nonumber
\end{align}
Making the change of variable $x=\frac{2\pi k t}{y}$ and replacing $z$ by $s-1+z$ in the above result, we get, for $1-\mathrm{Re}(s)<\mathrm{Re}(z)<2-\mathrm{Re}(s)$,
\begin{align}\label{mt of cosine fn}
\int_0^\infty\cos\left(\frac{2\pi kt}{y}\right)t^{s-1}t^{z-1}dt=\left(\frac{2\pi k}{y}\right)^{1-s-z}\Gamma(s-1+z)\sin\left(\frac{\pi}{2}(s+z)\right).
\end{align}
Equation \eqref{mt of hz} with $a=1$, \eqref{mt of cosine fn}, and an application of Parseval's formula \cite[p.~83, Equation (3.1.14)]{parsvel} gives, for $1-\mathrm{Re}(s)<c=\mathrm{Re}(z)<\textup{min}\left(0, 2-\mathrm{Re}(s)\right)$, 
\begin{align}\label{i1+i2}
\int_0^\infty\cos\left(\frac{2\pi kt}{y}\right)\frac{t^{s-1}}{e^t-1}dt
&=\left(\frac{2\pi k}{y}\right)^{1-s}\frac{1}{2\pi i}\int_{(c)}\Gamma(s-1+z)\sin\left(\frac{\pi}{2}(s+z)\right)\Gamma(1-z)\zeta(1-z)\left(\frac{2\pi k}{y}\right)^{-z}dz\nonumber\\
&=\left(\frac{2\pi k}{y}\right)^{1-s}\left\{\cos\left(\frac{\pi s}{2}\right)I_1(y,s)+\sin\left(\frac{\pi s}{2}\right)I_2(y,s)\right\},
\end{align}
where 
\begin{align}
I_1(y,s)&:=\frac{1}{2\pi i}\int_{(c)}\Gamma(s-1+z)\sin\left(\frac{\pi z}{2}\right)\Gamma(1-z)\zeta(1-z)\left(\frac{2\pi k}{y}\right)^{-z}dz,\label{i1ys}\\
I_2(y,s)&:=\frac{1}{2\pi i}\int_{(c)}\Gamma(s-1+z)\cos\left(\frac{\pi z}{2}\right)\Gamma(1-z)\zeta(1-z)\left(\frac{2\pi k}{y}\right)^{-z}dz.\label{i2ys}
\end{align}
Similarly, using the formula \cite[p.~42, Formula 1.5.1]{ober}
\begin{align*}
\int_0^\infty\sin(x)x^{z-1}dx=\Gamma(z)\sin\left(\frac{\pi z}{2}\right),\quad (-1<\mathrm{Re}(z)<1),
\end{align*}
it can be seen that for $-\mathrm{Re}(s)<c=\mathrm{Re}(z)<\textup{min}\left(0, 2-\mathrm{Re}(s)\right)$, 
\begin{align}\label{sin i1+i2}
\int_0^\infty\sin\left(\frac{2\pi kt}{y}\right)\frac{t^{s-1}}{e^t-1}dt
=\left(\frac{2\pi k}{y}\right)^{1-s}\left\{\sin\left(\frac{\pi s}{2}\right)I_1(y,s)-\cos\left(\frac{\pi s}{2}\right)I_2(y,s)\right\}.
\end{align}
We first evaluate $I_1(y,s)$. Apply the functional equation of the Riemann zeta function \cite[p.~603, Formula 25.4.2]{nist}
\begin{align}\label{zeta fe}
\zeta(s)=2^{s}\pi^{s-1}\Gamma(1-s)\zeta(1-s)\sin\left(\frac{\pi s}{2}\right),
\end{align}
in \eqref{i1ys} to see that
\begin{align}
I_1(y,s)=\frac{\pi}{2\pi i}\int_{(c)}\Gamma(s-1+z)\zeta(z)\left(\frac{4\pi^2 k}{y}\right)^{-z}dz.\nonumber
\end{align}
We want to use the series definition of $\zeta(z)$ to further simplify the above integral. Therefore we shift the line of integration to $d=\mathrm{Re}(z)>1$ and use residue theorem thereby obtaining
\begin{align}\label{apply residue}
I_1(y,s)&=\frac{\pi}{2\pi i}\int_{(d)}\Gamma(s-1+z)\zeta(z)\left(\frac{4\pi^2 k}{y}\right)^{-z}dz-\frac{y\Gamma(s)}{4\pi k}\nonumber\\
&=\pi\sum_{n=1}^\infty\frac{1}{2\pi i}\int_{(d)}\Gamma(s-1+z)\left(\frac{4\pi^2 nk}{y}\right)^{-z}dz-\frac{y\Gamma(s)}{4\pi k}\nonumber\\
&=\pi\left(\frac{4\pi^2 k}{y}\right)^{s-1}\sum_{n=1}^\infty n^{s-1}e^{-\frac{4\pi^2 nk}{y}}-\frac{y\Gamma(s)}{4\pi k},
\end{align}
where in the last step, we used
\begin{align}\label{invgamma}
	e^{-x}=\frac{1}{2\pi i}\int_{(\lambda)}\Gamma(z) x^{-z}dz\hspace{8mm}(\lambda>0).
\end{align}
We now focus on representing the other integral $I_2(y,s)$ in terms of an equivalent integral; see \eqref{i2 evaluated} below.   Again an application of \eqref{zeta fe} in \eqref{i2ys} yields
\begin{align}\label{i2 almost1}
I_2(y,s)=\frac{\pi}{2\pi i}\int_{(c)}\Gamma(s-1+z)\zeta(z)\cot\left(\frac{\pi z}{2}\right)\left(\frac{4\pi^2 k}{y}\right)^{-z}dz.
\end{align}
 If we shift the line of integration from Re$(z)=c$, where $1-\mathrm{Re}(s)<c<\min(0,2-\mathrm{Re}(s))$,   to $1<\mathrm{Re}(z)=d<2$, we encounter a simple pole at $z=0$ of the integrand in the integral of \eqref{i2 almost1} due to $\cot(\pi z/2)$. (Note that the pole of $\zeta(z)$ at $z=1$ is annihilated by the zero of $\cot(\pi z/2)$ at $z=1$.) Note that the integrals along the horizontal segments vanish using Stirling's formula in the vertical strip $p\leq\sigma\leq q$ \cite[p.~224]{cop}:
\begin{equation}\label{strivert}
  |\Gamma(s)|=\sqrt{2\pi}|t|^{\sigma-\frac{1}{2}}e^{-\frac{1}{2}\pi |t|}\left(1+O\left(\frac{1}{|t|}\right)\right)
\end{equation}
as $|t|\to \infty$. Therefore, by the residue theorem and \eqref{i2 almost1}, we have
 \begin{align}\label{i2 almost}
I_2(y,s)=\Gamma(s-1)+\frac{\pi}{2\pi i}\int_{(d)}\Gamma(s-1+z)\zeta(z)\cot\left(\frac{\pi z}{2}\right)\left(\frac{4\pi^2 k}{y}\right)^{-z}dz.
\end{align} 
Note that we can use the series definition of $\zeta(s)$ in \eqref{i2 almost} and then interchange the order of the summation and integration so as to obtain
\begin{align}\label{i2 almost2}
I_2(y,s)&=\Gamma(s-1)+\pi\sum_{n=1}^\infty\frac{1}{2\pi i}\int_{(d)}\Gamma(s-1+z)\cot\left(\frac{\pi z}{2}\right)\left(\frac{4\pi^2 nk}{y}\right)^{-z}dz.
\end{align}
From \cite[p.~182, Formula 2.4.4]{ober}, for $-1<c_2<1$, we have
\begin{align*}
\frac{1}{2\pi i}\int_{(c_2)}\tan\left(\frac{\pi z}{2}\right)x^{-z}dz=\frac{2}{\pi}\frac{x}{x^2-1},\quad (x\neq\pm1).
\end{align*}
Replacing $x$ by $x/n$ in the above result gives
\begin{align}\label{c2}
\frac{1}{2\pi i}\int_{(c_2)}\tan\left(\frac{\pi z}{2}\right)n^{z-1}x^{-z}dz=\frac{2}{\pi}\frac{x}{x^2-n^2},\quad (x\neq\pm n).
\end{align}
For $c_1>1-\mathrm{Re}(s)$, equation \eqref{invgamma} implies that
\begin{align}\label{c1}
\frac{1}{2\pi i}\int_{(c_1)}\Gamma(s-1+z)\left(\frac{4\pi^2k}{y}\right)^{-z}x^{-z}dz=e^{-\frac{4\pi^2 xk}{y}}\left(\frac{4\pi^2 xk}{y}\right)^{s-1}.
\end{align}
We next want to invoke Parseval's formula \cite[p.~83, Equation (3.1.11)]{parsvel} for the functions in \eqref{c2} and \eqref{c1}. For that we need to justify the following\footnote{Note that one has to justify the interchange of the order of the integrals in third step of \cite[p.~83]{parsvel} to use  Parseval's formula. The conditions under which it can be done are given after \cite[p.~83, Equation (3.1.11)]{parsvel}. But one of our integrals is a principal value integral, therefore, we need to justify this interchange of the order of the integration.}
\begin{align}\label{int of int}
\frac{1}{2\pi i}\int_{(d)}\mathrm{PV}\int_0^\infty\Gamma(s-1+z)\frac{x^{z-1}}{1-n^2x^2}dxdz=\mathrm{PV}\int_0^\infty\frac{1}{2\pi i}\int_{(d)}\Gamma(s-1+z)\frac{x^{z-1}}{1-n^2x^2}dzdx,
\end{align}
where $d>0$. Making the change of variable $z=d+it$, we see that
\begin{align}\label{intrechange1}
\frac{1}{2\pi i}\int_{(d)}\mathrm{PV}\int_0^\infty\Gamma(s-1+z)\frac{x^{z-1}}{1-n^2x^2}dxdz&=\frac{1}{2\pi}\int_{-\infty}^\infty\mathrm{PV}\int_0^\infty\Gamma(s-1+d+it)\frac{x^{d+it-1}}{1-n^2x^2}dxdt\nonumber\\
&=\frac{1}{2\pi}\int_{-\infty}^\infty\mathrm{PV}\int_0^{\frac{1}{n}+\epsilon}\Gamma(s-1+d+it)\frac{x^{d+it-1}}{1-n^2x^2}dxdt\nonumber\\
&+\frac{1}{2\pi}\int_{-\infty}^\infty\int_{\frac{1}{n}+\epsilon}^\infty\Gamma(s-1+d+it)\frac{x^{d+it-1}}{1-n^2x^2}dxdt.
\end{align}
Note that the inner integral in the second expression on the right-hand side is a usual improper integral. Therefore, we can interchange the order of the integration by standard methods \cite[p.~30, Theorem 2.2]{temme}. To justify the same in the first double integral on the right, we proceed as follows. Observe that 
\begin{align}\label{justify}
\frac{1}{2\pi}\int_{-\infty}^\infty\mathrm{PV}\int_0^{\frac{1}{n}+\epsilon}\Gamma(s-1+d+it)\frac{x^{d+it-1}}{1-n^2x^2}dxdt&=\frac{1}{2\pi}\int_{0}^\infty\mathrm{PV}\int_0^{\frac{1}{n}+\epsilon}\Gamma(s-1+d+it)\frac{x^{d+it-1}}{1-n^2x^2}dxdt\nonumber\\
&\quad+\frac{1}{2\pi}\int_{0}^\infty\mathrm{PV}\int_0^{\frac{1}{n}+\epsilon}\Gamma(s-1+d-it)\frac{x^{d-it-1}}{1-n^2x^2}dxdt.
\end{align}
We justify the interchange of the order of integration only for the first double integral. That for the second one can be similarly justified. To that end, for $B>0$, 
\begin{align}
\int_{0}^B\mathrm{PV}\int_0^{\frac{1}{n}+\epsilon}\Gamma(s-1+d+it)\frac{x^{d+it-1}}{1-n^2x^2}dxdt&=\mathrm{PV}\int_0^{\frac{1}{n}+\epsilon}\int_0^B\Gamma(s-1+d+it)\frac{x^{d+it-1}}{1-n^2x^2}dxdt,
\end{align}
using Hardy's result \cite[p.~94, Theorem 6]{hardypaper3}. 
Moreover, 
\begin{align*}
\lim_{B\to\infty}\mathrm{PV}\int_0^{\frac{1}{n}+\epsilon}\int_B^\infty \Gamma(s-1+d+it)\frac{x^{d+it-1}}{1-n^2x^2}dtdx=0,
\end{align*}
which follows from Stirling's formula \eqref{strivert}. This shows that the conditions mentioned in \cite[p.~94, Section 18]{hardypaper3} are satisfied. Thus we can interchange the order of integration in the first double integral in \eqref{justify}. This finally proves the validity of \eqref{int of int}.

Hence we can employ Parseval's formula \cite[p.~83, Equation (3.1.11)]{parsvel} for the functions in \eqref{c2} and \eqref{c1} which, for $0<c<2$, gives
\begin{align}\label{c3}
\frac{1}{2\pi i}\int_{(c)}\Gamma(s-1+z)\cot\left(\frac{\pi z}{2}\right)\left(\frac{4\pi^2 nk}{y}\right)^{-z}dz=\frac{2}{\pi}\mathrm{PV}\int_0^\infty e^{-\frac{4\pi^2 xk}{y}}\left(\frac{4\pi^2 xk}{y}\right)^{s-1}\frac{x}{x^2-n^2}dx.
\end{align}
Substituting the value from \eqref{c3} in \eqref{i2 almost2} so that
\begin{align}
I_2(y,s)=\Gamma(s-1)+\sum_{n=1}^\infty\mathrm{PV}\int_0^\infty e^{-\frac{4\pi^2 xk}{y}}\left(\frac{4\pi^2 xk}{y}\right)^{s-1}\frac{2x}{x^2-n^2}dx.\nonumber
\end{align}
By appealing to \eqref{inter of n and int} we can take the sum inside the integral in the above equation. Also note that $\Gamma(s-1)=\int_0^\infty e^{-\frac{4\pi^2 xk}{y}}\left(\frac{4\pi^2 xk}{y}\right)^{s-1}\frac{dx}{x}$ for Re$(s)>2, k>0$. Hence
\begin{align}\label{i2 evaluated}
I_2(y,s)&=\mathrm{PV}\int_0^\infty e^{-\frac{4\pi^2 xk}{y}}\left(\frac{4\pi^2 xk}{y}\right)^{s-1}\left\{\frac{1}{x}+ \sum_{n=1}^\infty\frac{2x}{x^2-n^2}\right\}dx\nonumber\\
&=\pi\ \mathrm{PV}\int_0^\infty\left(\frac{4\pi^2 xk}{y}\right)^{s-1}e^{-\frac{4\pi^2 xk}{y}}\cot(\pi x)dx,
\end{align}
which follows upon using \eqref{cotx}. The existence of the principal value integral appearing on the right-hand side of \eqref{i2 evaluated} is shown by Hardy \cite[p.~31]{hardy}. Substituting \eqref{apply residue} and \eqref{i2 evaluated} in \eqref{i1+i2} as well as in \eqref{sin i1+i2}, we get
\begin{align}\label{cosine ev}
\int_0^\infty\cos\left(\frac{2\pi kt}{y}\right)\frac{t^{s-1}}{e^t-1}dt&=\left(\frac{2\pi k}{y}\right)^{1-s}\left\{\cos\left(\frac{\pi s}{2}\right)\left(\pi\left(\frac{4\pi^2 k}{y}\right)^{s-1}\sum_{n=1}^\infty n^{s-1}e^{-\frac{4\pi^2 nk}{y}}-\frac{y\Gamma(s)}{4\pi k}\right)\right.\nonumber\\
&\quad\left.+\sin\left(\frac{\pi s}{2}\right)\pi\ \mathrm{PV}\int_0^\infty\left(\frac{4\pi^2 xk}{y}\right)^{s-1}e^{-\frac{4\pi^2 xk}{y}}\cot(\pi x)dx\right\},
\end{align}
and 
\begin{align}\label{sine ev}
\int_0^\infty\sin\left(\frac{2\pi kt}{y}\right)\frac{t^{s-1}}{e^t-1}dt&=\left(\frac{2\pi k}{y}\right)^{1-s}\left\{\sin\left(\frac{\pi s}{2}\right)\left(\pi\left(\frac{4\pi^2 k}{y}\right)^{s-1}\sum_{n=1}^\infty n^{s-1}e^{-\frac{4\pi^2 nk}{y}}-\frac{y\Gamma(s)}{4\pi k}\right)\right.\nonumber\\
&\quad\left.-\cos\left(\frac{\pi s}{2}\right)\pi\ \mathrm{PV}\int_0^\infty\left(\frac{4\pi^2 xk}{y}\right)^{s-1}e^{-\frac{4\pi^2 xk}{y}}\cot(\pi x)dx\right\}.
\end{align}
Substituting \eqref{cosine ev} and \eqref{sine ev} in \eqref{integral} and simplifying, we are led to
\begin{align}\label{minus sign}
\sum_{n=1}^\infty\frac{(n-a)^{s-1}}{e^{(n-a)y}-1}&=\frac{\Gamma(s)\zeta(s)}{y^s}+\left(\frac{2\pi}{y}\right)^s\cos\left(\frac{\pi s}{2}\right)\sum_{n=1}^\infty n^{s-1}\sum_{k=1}^\infty\cos\left(2\pi ak\right) e^{-\frac{4\pi^2 nk}{y}}-\frac{\Gamma(s)}{(2\pi)^{s}}\cos\left(\frac{\pi s}{2}\right)\sum_{k=1}^\infty\frac{\cos\left(2\pi ak\right)}{k^s}\nonumber\\
&\quad-\left(\frac{2\pi}{y}\right)^s\sin\left(\frac{\pi s}{2}\right)\sum_{n=1}^\infty n^{s-1}\sum_{k=1}^\infty\sin\left(2\pi ak\right) e^{-\frac{4\pi^2 nk}{y}}+\frac{\Gamma(s)}{(2\pi)^{s}}\sin\left(\frac{\pi s}{2}\right)\sum_{k=1}^\infty\frac{\sin\left(2\pi ak\right)}{k^s}\nonumber\\
&\quad+\left(\frac{2\pi}{y}\right)^s\sin\left(\frac{\pi s}{2}\right)\sum_{k=1}^\infty\cos(2\pi ak) \mathrm{PV}\int_0^\infty x^{s-1}e^{-\frac{4\pi^2 xk}{y}}\cot(\pi x)dx\nonumber\\
&\quad+\left(\frac{2\pi}{y}\right)^s\cos\left(\frac{\pi s}{2}\right)\sum_{k=1}^\infty\sin(2\pi ak) \mathrm{PV}\int_0^\infty x^{s-1}e^{-\frac{4\pi^2 xk}{y}}\cot(\pi x)dx.
\end{align}
Invoking Lemma \ref{intechange of PV and summation} we can interchange the summation and integration for the last expression on the right-hand side of \eqref{minus sign}. Also note that one can justify the same for the series involving $\cos(2\pi a k)$. Hence, after simplification, \eqref{minus sign} becomes
\begin{align}
\sum_{n=1}^\infty\frac{(n-a)^{s-1}}{e^{(n-a)y}-1}&=\frac{\Gamma(s)\zeta(s)}{y^s}+\left(\frac{2\pi}{y}\right)^s\sum_{n=1}^\infty n^{s-1}\sum_{k=1}^\infty\cos\left(\frac{\pi s}{2}+2\pi ak\right) e^{-4\pi^2 nk/y}-\frac{\Gamma(s)}{(2\pi)^{s}}\sum_{k=1}^\infty\frac{\cos\left(\frac{\pi s}{2}+2\pi ak\right)}{k^s}\nonumber\\
&\quad+\left(\frac{2\pi}{y}\right)^s \mathrm{PV}\int_0^\infty \left(\sum_{k=1}^\infty\sin\left(\frac{\pi s}{2}+2\pi ak\right)e^{-4\pi^2 kx/y}\right) x^{s-1}\cot(\pi x)dx.\nonumber
\end{align}
Making the change of variable $x\to xy/(2\pi)$ in the integral and rearranging the terms in the above expression, we get
\begin{align}\label{alt1}
&\frac{\Gamma(s)\zeta(s)}{y^s}+\left(\frac{2\pi}{y}\right)^s\sum_{n=1}^\infty n^{s-1}\sum_{k=1}^\infty\cos\left(\frac{\pi s}{2}+2\pi ak\right)e^{-4\pi^2 nk/y}\nonumber\\
&=\frac{\Gamma(s)}{(2\pi)^{s}}\sum_{k=1}^\infty\frac{\cos\left(\frac{\pi s}{2}+2\pi ak\right)}{k^s}+\sum_{n=1}^\infty\frac{(n-a)^{s-1}}{e^{(n-a)y}-1}-\mathrm{PV}\int_0^\infty\left(\sum_{k=1}^\infty\sin\left(\frac{\pi s}{2}+2\pi ak\right)e^{-2\pi kx}\right) x^{s-1} \cot\left(\frac{1}{2}y x\right)dx.
\end{align}
Now, using the fact $\cos(\theta)=(e^{i\theta}+e^{-i\theta})/2$, we find 
\begin{align}\label{cos}
\sum_{k=1}^\infty\cos\left(\frac{\pi s}{2}+2\pi ak\right)e^{-4\pi^2 nk/y}&=\frac{1}{2}e^{\pi is/2}\sum_{k=1}^\infty e^{-\left(\frac{4\pi^2 n}{y}-2\pi ia\right)k}+\frac{1}{2}e^{-\pi is/2}\sum_{k=1}^\infty e^{-\left(\frac{4\pi^2 n}{y}+2\pi ia\right)k}\nonumber\\
&=\frac{1}{2}\left(\frac{e^{\pi is/2}}{e^{\frac{4\pi^2 n}{y}-2\pi ia}-1}+\frac{e^{-\pi is/2}}{e^{\frac{4\pi^2 n}{y}+2\pi ia}-1}\right).
\end{align}
Similarly, 
\begin{align}\label{sin}
\sum_{k=1}^\infty\sin\left(\frac{\pi s}{2}+2\pi ak\right)e^{-2\pi kx}=\frac{1}{2i}\left(\frac{e^{\pi is/2}}{e^{2\pi x-2\pi ia}-1}-\frac{e^{-\pi is/2}}{e^{2\pi x+2\pi ia}-1}\right).
\end{align}
Substituting  \eqref{cos} and \eqref{sin} in \eqref{alt1}, we get
\begin{align*}
&\frac{\Gamma(s)\zeta(s)}{y^s}+\left(\frac{2\pi}{y}\right)^s\frac{1}{2}\sum_{n=1}^\infty n^{s-1}\left(\frac{e^{\pi is/2}}{e^{\frac{4\pi^2 n}{y}-2\pi ia}-1}+\frac{e^{-\pi is/2}}{e^{\frac{4\pi^2 n}{y}+2\pi ia}-1}\right)\nonumber\\
&=\frac{\Gamma(s)}{(2\pi)^{s}}\sum_{k=1}^\infty\frac{\cos\left(\frac{\pi s}{2}+2\pi ak\right)}{k^s}+\sum_{n=1}^\infty\frac{(n-a)^{s-1}}{e^{(n-a)y}-1}\\
&\quad-\frac{1}{2i}\mathrm{PV}\int_0^\infty x^{s-1}\left(\frac{e^{\pi is/2}}{e^{2\pi x-2\pi ia}-1}-\frac{e^{-\pi is/2}}{e^{2\pi x+2\pi ia}-1}\right) \cot\left(\frac{1}{2}y x\right)dx.
\end{align*}
Finally, we arrive at \eqref{ram with a eqn} after multiplying by $\left(2\pi/y\right)^{-s}$ on the both sides of the above equation and then letting $4\pi^2/y=\alpha$ with $\alpha\beta=4\pi^2$.
\end{proof}
	
\begin{proof}[Corollary \textup{\ref{a half s 2m}}][]
Let $a=1/2$ and $s=2m, m\in\mathbb{N}$ in Theorem \ref{ram with a} and observe that the principal value integral vanishes. The result then follows upon using Euler's formula \cite[p.~5, Equation (1.14)]{temme}
\begin{align}\label{euler formula}	
\zeta(2 m ) = (-1)^{m +1} \frac{(2\pi)^{2 m}B_{2 m }}{2 (2 m)!}.
\end{align} 
\end{proof}

\begin{proof}[Corollary \textup{\ref{close form}}][]
Let $\alpha=\beta=2\pi$ and $m$ be an odd positive integer in Corollary \ref{a half s 2m} so as to get
\begin{align}\label{3rdcor}
	\sum_{n=1}^\infty\frac{n^{2m-1}}{e^{2n\pi}+1}-\frac{1}{2^{2m-1}}\sum_{n=1}^\infty\frac{(2n-1)^{2m-1}}{e^{(2n-1)\pi}-1}=-2^{1-2m}\frac{B_{2m}}{4m}.
\end{align}
Now use the fact
\begin{align}\label{4thcor}
\sum_{n=1}^\infty\frac{(2n-1)^{2m-1}}{e^{(2n-1)\pi}-1}=\sum_{\substack{n=1\\n\hspace{0.5mm}\textup{odd}}}^\infty\frac{n^{2m-1}}{e^{n\pi}-1}	=	\sum_{n=1}^\infty\frac{n^{2m-1}}{e^{n\pi}-1}-2^{2m-1}\sum_{n=1}^\infty\frac{n^{2m-1}}{e^{2n\pi}-1},
\end{align}
and Glaisher's evaluation \cite{glaisher}
\begin{equation}\label{glaisher_eqn}
	\sum_{n=1}^{\infty}\frac{n^{2m-1}}{e^{2n\pi}-1}=\frac{B_{2m}}{4m} \hspace{5mm}(m\hspace{1mm}\textup{odd}>1)
\end{equation}
to arrive at \eqref{1stcor}.

As far as the proof of \eqref{2ndcor} is concerned, we let $m=1$ in \eqref{3rdcor} and use \eqref{4thcor} and use Schl\"{o}milch's result \cite{schlomilch}
\begin{equation*}
	\sum_{n=1}^{\infty}\frac{n}{e^{2n\pi}-1}=\frac{1}{24}-\frac{1}{8\pi}.
\end{equation*}
\end{proof}

\begin{proof}[Corollary \textup{\ref{a=1/4and 3/4}}][]
Letting $s=2m$ and $a=1/4$ in Theorem \ref{ram with a} and simplifying, we get
\begin{align}\label{a1/4}
&\alpha^m\left\{\frac{\Gamma(2m)\zeta(2m)}{(2\pi)^{2m}}+(-1)^{m+1}\sum_{n=1}^\infty \frac{n^{2m-1}}{e^{2n\alpha}+1}\right\}\nonumber\\
&=\beta^m\left\{\frac{(-1)^m\Gamma(2m)}{(2\pi)^{2m}}\sum_{k=1}^\infty\frac{\cos(\pi k/2)}{k^{2m}}+\sum_{n=1}^\infty\frac{(n-1/4)^{2m-1}}{e^{(n-1/4)\beta}-1}-\frac{(-1)^m}{2}\textup{PV}\int_0^\infty \frac{x^{2m-1}\cot(\beta x/2)}{\cosh(2\pi x)}dx\right\}.
\end{align}
Now take $s=2m$ and $a=3/4$ in Theorem \ref{ram with a} to obtain
\begin{align}\label{a3/4}
&\alpha^m\left\{\frac{\Gamma(2m)\zeta(2m)}{(2\pi)^{2m}}+(-1)^{m+1}\sum_{n=1}^\infty \frac{n^{2m-1}}{e^{2n\alpha}+1}\right\}\nonumber\\
&=\beta^m\left\{\frac{(-1)^m\Gamma(2m)}{(2\pi)^{2m}}\sum_{k=1}^\infty\frac{(-1)^k\cos(\pi k/2)}{k^{2m}}+\sum_{n=1}^\infty\frac{(n-3/4)^{2m-1}}{e^{(n-3/4)\beta}-1}+\frac{(-1)^m}{2}\textup{PV}\int_0^\infty \frac{x^{2m-1}\cot(\beta x/2)}{\cosh(2\pi x)}dx\right\}.
\end{align}
Now add \eqref{a1/4} and \eqref{a3/4} so that
\begin{align}
&\alpha^m\left\{2\frac{\Gamma(2m)\zeta(2m)}{(2\pi)^{2m}}+2(-1)^{m+1}\sum_{n=1}^\infty \frac{n^{2m-1}}{e^{2n\alpha}+1}\right\}\nonumber\\
&=\beta^m\left\{\frac{(-1)^m\Gamma(2m)}{(2\pi)^{2m}}\sum_{k=1}^\infty\frac{(1+(-1)^k)\cos(\pi k/2)}{k^{2m}}+\sum_{n=1}^\infty\frac{(n-1/4)^{2m-1}}{e^{(n-1/4)\beta}-1}+\sum_{n=1}^\infty\frac{(n-3/4)^{2m-1}}{e^{(n-3/4)\beta}-1}\right\}.\nonumber
\end{align}
Using the fact $\sum_{n=1}^\infty(-1)^k/k^{2m}=2^{-2m}(2-2^{2m})\zeta(2m)$ in the above equation, we arrive at \eqref{a=1/4and 3/4 eqn}.

Next, subtracting \eqref{a3/4} from \eqref{a1/4} yields
\begin{align}
&\frac{(-1)^m\Gamma(2m)}{(2\pi)^{2m}}\sum_{k=1}^\infty\frac{(1-(-1)^k)\cos(\pi k/2)}{k^{2m}}+\sum_{n=1}^\infty\frac{(n-1/4)^{2m-1}}{e^{(n-1/4)\beta}-1}-\sum_{n=1}^\infty\frac{(n-3/4)^{2m-1}}{e^{(n-3/4)\beta}-1}\nonumber\\
&=(-1)^m\textup{PV}\int_0^\infty \frac{x^{2m-1}\cot(\beta x/2)}{\cosh(2\pi x)}dx.\nonumber
\end{align}
Note that the first sum on the left-hand side of the above equation vanishes. Now replacing $\beta$ by $4\beta$ and rewriting the left-hand side in terms of the Dirichlet character $\chi$ defined in \eqref{character}, we are led to \eqref{a=1/4and 3/4 eqn1}. 
\end{proof}

\section{A simple proof of the transformation for $\sum_{n=1}^\infty \sigma_{2m}(n)e^{-ny}$}\label{appl}

In \cite{dkk1}, this theorem was obtained for the first time as a corollary of a more general result, namely, \eqref{maineqn}. Hence the absolute convergence of the series on the right-hand side of \eqref{dkka=2m eqn} resulted automatically. In what follows, we not only give a direct proof of this result, but also prove from scratch the convergence of the series.

To that end, we first prove the identity for $y>0$ and later extend it to Re$(y)>0$ by analytic continuation. We begin by showing the absolute convergence of the series on the right-hand side of \eqref{dkka=2m eqn}. Note that for $w>0$, \eqref{symmetry} and Theorem \ref{minusonelemma} imply
\begin{align}\label{before headache}
\sinh(w)\mathrm{Shi}(w)-\cosh(w)\mathrm{Chi}(w)+\sum_{j=1}^m(2j-1)!w^{-2j}=(-1)^m\mathfrak{R}_m(1,w)=\frac{(-1)^m}{w^{2m}}\mathfrak{R}_m(w,1).
\end{align}
Now employ Lemma \ref{genraabeasyinfty}  for $\mathfrak{R}_m(w,1)$, and then let $w=4\pi^2n/y$, where $y>0$ (as assumed at the beginning of the proof), so that as $n\to\infty$, we have
\begin{equation}\label{headache}
	\sinh\left(\frac{4\pi^2n}{y}\right)\mathrm{Shi}\left(\frac{4\pi^2n}{y}\right)-\cosh\left(\frac{4\pi^2n}{y}\right)\mathrm{Chi}\left(\frac{4\pi^2n}{y}\right)+\sum_{j=1}^m(2j-1)!\left(\frac{4\pi^2n}{y}\right)^{-2j}=O_{m, y}\left(\frac{1}{n^{2m+2}}\right).
\end{equation}
The absolute convergence of  $\sum_{n=1}^{\infty}\sigma_{2m}(n)/n^{2m+2}$ then implies that of the series on the right-hand side of \eqref{dkka=2m eqn} with the help of the above estimate. 

We now prove \eqref{dkka=2m eqn}. Let $a=0$ and $s=2m+1$ in \eqref{4.3} so that
\begin{align}
\sum_{n=1}^\infty \sigma_{2m}(n)e^{-ny}=\frac{(2m)!}{y^{2m+1}}\left\{\zeta(2m+1)+\sum_{k=1}^\infty\left(\zeta\left(1+2m,1+\frac{2\pi ik}{y}\right)+\zeta\left(1+2m,1-\frac{2\pi ik}{y}\right)\right)\right\}.\nonumber
\end{align}
Using \eqref{zeta shift}, we can see that for $m\in\mathbb{N}$,
 $$\zeta\left(1+2m,1+\frac{2\pi ik}{y}\right)+\zeta\left(1+2m,1-\frac{2\pi ik}{y}\right)=\zeta\left(1+2m,\frac{2\pi ik}{y}\right)+\zeta\left(1+2m,-\frac{2\pi ik}{y}\right).$$
Now employ Theorem \ref{nagoyagen} with $z=m$ and $w=2\pi k/y$ in the above equation to see that
\begin{align}
\sum_{n=1}^\infty \sigma_{2m}(n)e^{-ny}&=\frac{(2m)!}{y^{2m+1}}\Bigg\{\zeta(2m+1)+\frac{\cos(\pi m)}{m}\left(\frac{2\pi}{y}\right)^{-2m}\sum_{k=1}^\infty\frac{1}{k^{2m}}\nonumber\\
&\quad+2\sum_{k=1}^\infty\sum_{n=1}^\infty\int_0^\infty\left(\frac{1}{(v-2\pi ik/y)^{2m+1}}+\frac{1}{(v+2\pi ik/y)^{2m+1}}\right)\cos(2\pi nv)\ dv\Bigg\}\nonumber\\
&=\frac{(2m)!}{y^{2m+1}}\Bigg\{\zeta(2m+1)+\frac{(-1)^m}{m}\left(\frac{2\pi}{y}\right)^{-2m}\zeta(2m)+2(2\pi)^{2m}\sum_{k=1}^\infty\sum_{n=1}^\infty n^{2m}\nonumber\\
&\quad\times\int_0^\infty\left(\frac{1}{(t-4\pi^2 ink/y)^{2m+1}}+\frac{1}{(t+4\pi^2 ink/y)^{2m+1}}\right)\cos(t)\ dt\Bigg\},\nonumber
\end{align}
where in the last step we made change of variable $v=t/(2\pi n)$. Letting $nk=\ell$, we see that
\begin{align}\label{dk1}
\sum_{n=1}^\infty \sigma_{2m}(n)e^{-ny}&=\frac{(2m)!}{y^{2m+1}}\Bigg\{\zeta(2m+1)+\frac{(-1)^m}{m}\left(\frac{2\pi}{y}\right)^{-2m}\zeta(2m)+2(2\pi)^{2m}\sum_{\ell=1}^\infty\sum_{n|\ell} n^{2m}\nonumber\\
&\quad\times\int_0^\infty\left(\frac{1}{(t-4\pi^2 i\ell/y)^{2m+1}}+\frac{1}{(t+4\pi^2 i\ell/y)^{2m+1}}\right)\cos(t)\ dt\Bigg\}.
\end{align}
Next, invoke Theorem \ref{minusonelemma} with $w=4\pi^2\ell/y$ in \eqref{dk1} to arrive at
\begin{align}\label{beforezeta2m}
\sum_{n=1}^\infty \sigma_{2m}(n)e^{-ny}&=\frac{(2m)!}{y^{2m+1}}\Bigg\{\zeta(2m+1)+\frac{(-1)^m}{m}\left(\frac{2\pi}{y}\right)^{-2m}\zeta(2m)+\frac{4(-1)^{m}(2\pi)^{2m}}{(2m)!}\sum_{\ell=1}^\infty\sigma_{2m}(\ell)\nonumber\\
&\quad\times\Bigg\{\sinh\left(\frac{4\pi^2\ell}{y}\right)\mathrm{Shi}\left(\frac{4\pi^2\ell}{y}\right)-\cosh\left(\frac{4\pi^2\ell}{y}\right)\mathrm{Chi}\left(\frac{4\pi^2\ell}{y}\right)+\sum_{j=1}^m(2j-1)!\left(\frac{4\pi^2\ell}{y}\right)^{-2j}\Bigg\}.
\end{align}
Using \eqref{euler formula} in \eqref{beforezeta2m} and rearranging the terms leads to \eqref{dkka=2m eqn} for $y>0$. The result can be extended by analytic continuation to Re$(y)>0$. This is seen as follows. Clearly, the left-hand side of \eqref{dkka=2m eqn}  is analytic in this region. We now show that the series on the right is also analytic. In order to prove this using Weierstrass' theorem on analytic functions, we need only show that  \eqref{headache} holds for Re$(y)>0$ as well. To that end, employing $(1-\sqrt{\xi})^{-(2m+1)}-(1+\sqrt{\xi})^{-(2m+1)}=2(2m+1)\sqrt{\xi}\ {}_2F_1\left(m+1,m+\frac{3}{2};\frac{3}{2};\xi\right)$, we find that
\begin{align}\label{headache solved}
\int_0^\infty\left(\frac{1}{(t-iw)^{2m+1}}+\frac{1}{(t+iw)^{2m+1}}\right)\cos(t) dt&=\frac{2(2m+1)}{(-1)^mw^{2m+2}}\int_{0}^{\infty}t\cos(t){}_2F_{1}\left(m+1,m+\frac{3}{2};\frac{3}{2};-\frac{t^2}{w^2}\right)dt.
\end{align}
The integral on the right can be evaluated in terms of the Meijer $G$-function $\MeijerG*{3}{1}{2}{4}{1,\frac{3}{2}}{1,m+1,m+\frac{3}{2},\frac{3}{2}}{\frac{w^2}{4}}$ employing \cite[p.~81, Formula \textbf{8.17.6}]{tit2}. That
\begin{align}\label{G bound}
\MeijerG*{3}{1}{2}{4}{1,\frac{3}{2}}{1,m+1,m+\frac{3}{2},\frac{3}{2}}{\frac{w^2}{4}}=-\frac{w^2}{\sqrt{\pi}2^{2m+1}}\sum_{j=1}^{r+1}\frac{\Gamma(2m+2j)}{w^{2j}}+O(w^{-2r-4}),\quad \textup{as}\ w\to\infty,\ \textup{Re}(w)>0
\end{align}
can then be obtained using the asymptotic of this Meijer $G$-function given in \cite[p.~179, Theorem 2]{Luke}. With $w=4\pi^2n/y, \text{Re}(y)>0$, the first equality of \eqref{before headache} and \eqref{headache solved} finally prove \eqref{headache}. This completes the proof of \eqref{dkka=2m eqn} for Re$(y)>0$.

\qed

\section{Asymptotics of the plane partitions generating function}\label{asymptotic}
\begin{proof}[Corollary \textup{\ref{asym for general m}}][]
The following estimate can be directly shown to hold as $y\to0$ in Re$(y)>0$ as is done later. However, we first prove it separately for real $y\to0^+$ owing to its simplicity. 

Indeed, for real $y\to0^+$, Lemma \ref{genraabeasyinfty} along with \eqref{before headache} imply that
\begin{align}\label{summand bound}
&\sinh\left(\frac{4\pi^2n}{y}\right)\mathrm{Shi}\left(\frac{4\pi^2n}{y}\right)-\cosh\left(\frac{4\pi^2n}{y}\right)\mathrm{Chi}\left(\frac{4\pi^2n}{y}\right)+\sum_{j=1}^m(2j-1)!\left(\frac{4\pi^2n}{y}\right)^{-2j}\nonumber\\
&=\frac{-1}{(4\pi^2n)^{2m}}y^{2m}\sum_{j=1}^{r+1}\frac{\Gamma(2m+2j)}{(4\pi^2n)^{2j}}y^{2j}+O\left(\frac{y^{2r+2m+4}}{n^{2r+2m+4}}\right).
\end{align}
For complex $y$ in Re$(y)>0$ such that $y\to0$, \eqref{summand bound} is seen to hold from the first equality of \eqref{before headache}, \eqref{headache solved} and \eqref{G bound}.

Substituting \eqref{summand bound} in \eqref{dkka=2m eqn}, we deduce that
\begin{align}
\sum_{n=1}^\infty\sigma_{2m}(n)e^{-ny}&=\frac{(2m)!}{y^{2m+1}}\zeta(2m+1)-\frac{B_{2m}}{2my}-(-1)^m\frac{2}{\pi}\left(\frac{2\pi}{y}\right)^{2m+1}\frac{y^{2m}}{(2\pi)^{4m}}\sum_{n=1}^\infty\frac{\sigma_{2m}(n)}{n^{2m}}\sum_{j=1}^{r+1}\frac{\Gamma(2m+2j)}{(4\pi^2n)^{2j}}y^{2j}\nonumber\\
&\qquad+O\left(y^{2r+3}\right)\nonumber\\
&=\frac{(2m)!}{y^{2m+1}}\zeta(2m+1)-\frac{B_{2m}}{2my}-(-1)^m\frac{2}{y\pi(2\pi)^{2m-1}}\sum_{j=1}^{r+1}\frac{\Gamma(2m+2j)}{(4\pi^2)^{2j}}y^{2j}\sum_{n=1}^\infty\frac{\sigma_{2m}(n)}{n^{2m+2j}}+O\left(y^{2r+3}\right)\nonumber.
\end{align}
Using the well-known identity $\sum_{n=1}^\infty\sigma_a(n)n^{-s}=\zeta(s)\zeta(s-a)$, where $\textup{Re}(s)>1, \textup{Re}(s-a)>1,$ in the above expression, we arrive at \eqref{asym for general m eqn}. 
\end{proof}

We are now ready to derive Wright's result from \cite{wright} as a special case of Corollary \ref{asym for general m}.
\begin{proof}[Corollary \textup{\ref{wright asym}}][]
Letting $m=1$ and $y=\log(1/x), |x|<1,$ in Corollary \ref{asym for general m} and using \eqref{plain partition}, as $x\to1^-$, we have
\begin{align}
x\frac{d}{dx}\log F(x)&=-\frac{2\zeta(3)}{(\log x)^3}+\frac{1}{12\log x}-\frac{1}{\pi^2}\sum_{j=1}^{r+1}\frac{\Gamma(2j+2)\zeta(2j+2)\zeta(2j)}{(2\pi)^{4j}}(\log x)^{2j-1}+O\left(-(\log x)^{2r+3}\right).
\end{align} 
Now divide both sides by $x$ and then integrate with respect to $x$ to get
\begin{align}
\log F(x)=c+\frac{\zeta(3)}{(\log x)^2}+\frac{1}{12}\log\log x-\frac{1}{\pi^2}\sum_{j=1}^{r+1}\frac{\Gamma(2j+2)\zeta(2j+2)\zeta(2j)}{2j(2\pi)^{4j}}(\log x)^{2j}+O\left((\log x)^{2r+4}\right),
\end{align}
where $c$ is an integrating constant. Exponentiating both sides of the above equation, we arrive at \eqref{wright asym eqn}. 
\end{proof}

\section{Concluding remarks}\label{cr}

For general $a$ with $0\leq a<1$, the generalized Lambert series 
\begin{equation*}
	\sum_{n=1}^{\infty}\frac{(n-a)^{s-1}}{e^{(n-a)z}-1}\hspace{8mm}(s\in\mathbb{C}, \textup{Re}(z)>0)
\end{equation*}
does not seem to have been studied before. It makes its appearance for the first time in Theorem \ref{ram with a} of our paper. It may be interesting to undertake a further study of this series.

In \cite[Theorem 1]{bradley2002}, Bradley obtains a generalization of Ramanujan's formula \eqref{rameqn} for periodic functions $g$ with period $m\in\mathbb{N}$. When $g$ is even, his transformation involves the series of the type $\displaystyle\sum_{n=1}^\infty\frac{g(n)n^{-2m-1}}{e^{n\beta}-1}$ whereas for $g$ odd, it involves $\displaystyle\sum_{n=1}^\infty\frac{g(n)n^{-2m}}{e^{n\beta}-1}$. Observe that the series in our \eqref{a=1/4and 3/4 eqn1} involves $\displaystyle\sum_{n=1}^\infty\frac{\chi(n)n^{2m-1}}{e^{n\beta}-1}$, where $\chi(n)$ defined in \eqref{character} is an odd Dirichlet character and $m\in\mathbb{N}, m>1$, and hence does not fall under the purview of Bradley's transformation. Thus it may be worthwhile to see if a more general transformation encompassing our series exists. We note that another series which is not covered by Bradley's transformation is $\displaystyle\sum_{n=1}^\infty\frac{n^{-2m}}{e^{n\beta}-1}$, for which a transformation was recently obtained in \cite[Theorem 2.12]{dkk1}.

In \cite{dkk1}, \eqref{dkka=2m eqn} was obtained as a special case of \eqref{maineqn} by tour de force whereas in the present paper, this has been accomplished directly. One can then ask if a direct proof of Theorem 2.12  of \cite{dkk1}, which is a transformation for $\displaystyle\sum_{n=1}^\infty\frac{n^{-2m}}{e^{n\beta}-1}$, can be derived without resorting to \eqref{maineqn}.

\section*{\textbf{Acknowledgements}}
The authors sincerely thank George E. Andrews for a helpful discussion on plane partitions. They would also like to thank  Donghun Yu from POSTECH library for arranging references \cite{hardy, hardypaper2, wigert} for us. The first author's research was partially supported by the Swarnajayanti Fellowship Grant SB/SJF/2021-22/08 of SERB (Govt. of India) and the CRG grant CRG/2020/002367 of SERB. The second author's research was supported by the grant IBS-R003-D1 of the IBS-CGP, POSTECH, South Korea. Both the authors sincerely thank the respective funding agencies for their support. Part of this work was done when the second author was visiting IIT Gandhinagar in May 2022. He sincerely thanks IBS-CGP for the financial support and IIT Gandhinagar for its hospitality.

\end{document}